\documentclass[11pt,letterpaper]{amsart}
\usepackage{amsmath, amssymb, amsthm, amsfonts}
\usepackage{mathrsfs}
\usepackage{booktabs, longtable, caption, float}
\usepackage{hyperref}
\usepackage{cleveref}
\usepackage{tikz-cd, tikz}
\usepackage{tabularx, multirow}
\usepackage{enumitem, lscape}
\usepackage{array, geometry}
\usepackage{listings, xcolor, textcomp}
\usepackage{cite}
\usepackage[all]{xy}
\usetikzlibrary{positioning, decorations.text}

\lstdefinelanguage{GP}{
    keywords={for, if, print, bnfinit, bnrinit, bnrclassfield, 
              subgrouplist, galoisinit, galoisidentify, idealmul, 
              idealpow, idealfactor, concat, default, bnf, bnr,
              idealprimedec, idealhnf, idealnorm, idealdivexact,
              vector, matrix, listcreate, listput, listinsert,
              while, until, break, next, return, local, my, 
              allocatemem, gettime, getstack, parisize},
    sensitive=true,
    comment=[l]{\\},
    string=[b]{"},
    morecomment=[l]{\%},
    basicstyle=\ttfamily\small,
    keywordstyle=\color{blue}\bfseries,
    commentstyle=\color{green!60!black},
    stringstyle=\color{red},
    breaklines=true,
    frame=single,
    showstringspaces=false,
    tabsize=4
}
\newcommand{\Q}{\mathbb{Q}}

\DeclareMathOperator{\Cl}{Cl}
\DeclareMathOperator{\Gal}{Gal}

\newcommand{\C}{\mathbb{C}}

\newcommand{\cF}{\mathcal{F}}
\newcommand{\cN}{\mathcal{N}}

\newcommand{\Hil}{\operatorname{Hil}}
\newcommand{\R}{\mathcal{R}}

\newcolumntype{L}{>{\raggedright\arraybackslash}p{0.68\linewidth}}
\newcolumntype{M}{>{\raggedright\arraybackslash}p{0.65\linewidth}}

\crefformat{section}{\S#2#1#3}
\crefformat{subsection}{\S#2#1#3}
\crefformat{subsubsection}{\S#2#1#3}
\crefformat{theorem}{Theorem~#2#1#3}
\crefformat{lemma}{Lemma~#2#1#3}
\crefformat{proposition}{Proposition~#2#1#3}
\crefformat{corollary}{Corollary~#2#1#3}
\crefformat{definition}{Definition~#2#1#3}
\crefformat{notation}{Notation~#2#1#3}
\tikzset{sgplattice/.style={inner sep=1pt,norm/.style={red!50!blue},char/.style={blue!50!black}, lin/.style={black!50}},cnj/.style={black!50,yshift=-2.5pt,left=-1pt of #1,scale=0.5,fill=white}}

\newtheorem{theorem}{Theorem}[section]
\newtheorem{lemma}[theorem]{Lemma}
\newtheorem{proposition}[theorem]{Proposition}
\newtheorem{corollary}[theorem]{Corollary}
\theoremstyle{definition}
\newtheorem{definition}[theorem]{Definition}
\newtheorem{example}[theorem]{Example}

\newtheorem{remark}[theorem]{Remark}
\numberwithin{equation}{section}

\title{ Imaginary Non-CM Fields of $2$-Power Degree}

\author{Farahnaz Amiri}
\address{Beijing Institute of Mathematical Sciences and Applications}
\email{amiryii92@gmail.com}

\keywords{Class number one problem, non-CM fields, dihedral extensions, conductor, 2-power dihedral fields, complex multiplication}

\begin{document}

\begin{abstract}
We establish a new method for the study of imaginary non-CM fields of arbitrary \(2\)-power degree and class number one. In particular, we show that for degrees greater than
\(16\), dihedral fields either lie in the ray class field of one of the three
imaginary quadratic fields \(\mathbb{Q}(\sqrt{-2})\),
\(\mathbb{Q}(\sqrt{-3})\), \(\mathbb{Q}(\sqrt{-67})\), with the conductor of
this ray class field supported only on primes dividing \(2\) and with explicit
upper bounds on the relevant exponents, or are Hilbert class fields of
imaginary quadratic fields. We also give a complete classification of the
\(104\) imaginary non-CM fields of degree \(16\) with class number one,
providing explicit tables of defining polynomials, Galois groups, base fields,
and conductors.

These 104 non-CM fields, together with the five CM fields classified by Louboutin and Okazaki \cite{lou7}, complete the classification of all imaginary fields of degree 16 with class number one. For CM fields, the
problem reduces to computing relative class numbers via analytic class number
formulae, whereas for non-CM fields the absence of a totally real subfield of
index \(2\) necessitates a further approach and leads to a richer variety of
Galois groups and ramification patterns.
\end{abstract}
\maketitle


\section{Introduction}

The class number one problem stands as one of the most venerable and
consequential challenges in algebraic number theory, tracing its origins to
Gauss's \textit{Disquisitiones Arithmeticae}, where the problem of classifying
imaginary quadratic fields of class number one was first posed \cite{Gauss}.
Over the ensuing two centuries, this problem has evolved into a central theme,
intertwining deep aspects of arithmetic geometry, Galois theory, and analytic
number theory, and driving the development of foundational tools such as class
field theory and the theory of complex multiplication.

A milestone in the modern development of this program was achieved by Odlyzko
\cite{Odlyzko}, who proved the finiteness of CM fields---totally imaginary
quadratic extensions of totally real fields---with class number one. This
result, combined with increasingly sophisticated analytic and computational
techniques, has led to complete classifications of normal CM fields with class
number one across various degrees and settings. The works of Louboutin and
Okazaki \cite{lou7}, Louboutin, Okazaki, and Olivier \cite{LOO}, Chang and Kwon
\cite{48}, Lee and Kwon \cite{50}, and Park, Yang, and Kwon \cite{32}
represent the culmination of this line of inquiry; similarly, the work of
Louboutin provides exhaustive classifications for CM fields of \(2\)-power
degree. Moreover, other related problems have also been studied \cite{Voight2008}, \cite{louboutin2026class}, and \cite{ichimura2022class}.

Despite these successes, a fundamental asymmetry has persisted in the
literature. While CM fields have received comprehensive treatment, the vast
majority of imaginary fields---specifically, those without a totally real
subfield of index two, known as non-CM fields---have remained largely
intractable. The classification of non-CM fields with class number one \cite{Y} began
roughly contemporaneously with that of CM fields \cite{Yamamura1}; however,
progress quickly stalled for non-CM fields beyond the initial degrees. In
contrast, extensive lists of CM number fields with class number one have been
completed for various degrees by numerous authors over the years.

This disparity is not merely quantitative: CM fields constitute
only a small and highly structured subset of all imaginary fields, whereas
non-CM fields form a vastly larger and more diverse family. In fact, the class
number one problem for CM fields is typically solved using analytic and
computational tools.Here, however, we must additionally seek subtler,
field specific structures to facilitate their classification. We prove that there are \(104\) imaginary
non-CM fields of degree \(16\) with class number one, while
\cite{lou7} showed that only five CM number fields of degree \(16\) have class
number one. Given that
the majority of imaginary number fields are non-CM, this stagnation represents
a significant gap. Indeed, for decades, no substantial progress was achieved
for non-CM fields.

In this paper, we overcome these longstanding obstacles by introducing a novel
and general framework that reduces the classification of non-CM fields with
class number one to a finite, computationally tractable search. Unlike
Yamamura's method, which was tailored to the octic setting, our framework is
designed to be applicable to a wide range of degrees and field families. We
demonstrate the first complete classification of imaginary non-CM fields of
degree \(16\) with class number one, and we extend our methods to dihedral
fields of arbitrary \(2\)-power degree, establishing structural constraints
that dramatically narrow the search space.

\subsection*{Our strategy for classifying non-CM fields of \(2\)-power degree}

Our strategy is as follows.

\medskip
\noindent\textbf{Step 1: Galois-group elimination.}
In Section~\ref{sec:preliminaries}, using structural arguments, we eliminate several possible Galois groups for degree-\(16\) imaginary non-CM fields.

\medskip
\noindent\textbf{Step 2: Ramification dichotomy.}
In Section~\ref{sec:ram-Lmax}, we analyze the unique degree-\(8\) normal subfield \(L_{\max}\) of a candidate field \(L\). Since \(L_{\max}\) is necessarily imaginary---for if it were totally real, \(L\) would be CM---we determine whether \(L_{\max}\) is itself CM or non-CM. We further show that \(h_L = 1\) forces
\(h_{L_{\max}}=1\) or \(2,\)
depending on whether \(L/L_{\max}\) is unramified or ramified, respectively. This dichotomy reduces the computations, since imaginary fields of class number one are almost completely classified.

\medskip
\noindent\textbf{Step 3: Conductor determination.}
In Sections~\ref{sec:suitable_N} and \ref{sec:conductor_bounds}, we identify a suitable quadratic subfield \(N \subseteq L\) such that \(L/N\) is abelian, and we derive the ramification properties of \(L/N\) in terms of those of \(L_{\max}/N\). In Section~\ref{sec:conductor_bounds}, we determine the precise set of conductors \(\mathfrak{f}_{L/N}\) in terms of \(\mathfrak{f}_{L_{\max}/N}\) for which the inclusion \(L \subseteq \mathcal{R}_{\mathfrak{f}}(N)\) can hold, thereby reducing the classification to an explicit finite enumeration of ray class fields.

\medskip
\noindent\textbf{Step 4: Verification.}
Section~\ref{sec:all-degree16-algorithm} describes the computational algorithm used to enumerate all candidates and verify that \(h_L = 1\) in each case. The full classification data---defining polynomials, Galois groups, and conductor ideals---are presented there and in the accompanying tables in Section~\ref{sec:computations}.

\medskip
\noindent\textbf{Step 5: Higher \(2\)-power degrees.}
Section~\ref{sec:consequences} extends the analysis to imaginary non-CM dihedral fields of degree \(32\) and, more generally, of arbitrary \(2\)-power degree, establishing the conductor constraints and the class number one conditions for those families.

\subsection*{The degree-\(16\) classification}

Our principal quantitative result is the first complete classification of imaginary non-CM number fields of degree \(16\) with class number one. We prove that there are exactly \(\mathbf{104}\) such fields. This substantially extends the known landscape, which previously was limited to CM fields: Louboutin and Okazaki \cite{lou7} (see also Appendix~\ref{AppendixA}) showed that there are exactly \(5\) normal CM fields of degree \(16\) with class number one, while the non-CM setting exhibits a considerably richer structure. The \(104\) fields fall into six families according to their Galois group.

\begin{enumerate}[label=\normalfont(\arabic*)]

\item \textbf{Dihedral \(D_{16}\)-extensions (\(45\) fields).}
There are \(45\) imaginary non-CM Galois extensions \(L/\mathbb{Q}\) of degree \(16\) with Galois group
\[
D_{16} = \langle a, b \mid a^8 = b^2 = 1,\ bab^{-1} = a^{-1} \rangle
\]
and class number \(h_L = 1\). Of these, \(18\) are ramified over the unique quadratic subfield \(N=\mathbb{Q}(\sqrt{-d})\); they arise as subfields of ray class fields whose conductor divides \((2)\) (for \(14\) values of \(d\)), conductor \((7)\) (for \(3\) values), and conductor \((14)\) (for \(d = 1\)). The remaining \(27\) fields are unramified extensions of \(N\) arising as Hilbert class fields for \(27\) distinct values of \(d\) (see Tables~\ref{tab:D16-ram} and~\ref{tab:D16-unram}).

\item \textbf{Semidihedral \(SD_{16}\)-extensions (\(11\) fields).}
There are \(11\) imaginary non-CM Galois extensions of degree \(16\) with Galois group
\[
SD_{16} = \langle a, b \mid a^8 = b^2 = 1,\ bab^{-1} = a^3 \rangle
\]
and class number one, all arising as subfields of ray class fields of \(\mathbb{Q}(\sqrt{-1})\) or \(\mathbb{Q}(\sqrt{-2})\) with conductors specified in Table~\ref{tab:SD16}.

\item \textbf{\(M_4(2)\)-extensions (\(1\) field).}
There is exactly one imaginary non-CM Galois extension of degree \(16\) with Galois group
\[
M_4(2) = \langle a, b \mid a^8 = b^2 = 1,\ bab = a^5 \rangle
\]
and class number one. It arises as a subfield of a ray class field of \(\mathbb{Q}(\sqrt{-2})\) whose conductor is a power of the prime above \(2\) (see Table~\ref{tab:M4(2)}).

\item \textbf{\(C_4 \circ D_8\)-extensions (\(7\) fields).}
There are \(7\) imaginary non-CM Galois extensions of degree \(16\) with class number one and Galois group
\[
C_4 \circ D_8
= \langle a, r, s \mid a^4 = r^4 = s^2 = 1,\ a^2 = r^2,\ ar = ra,\ as = sa,\ srs = r^{-1} \rangle.
\]
Let \(N_1,N_2,N_3\) be the three quadratic subfields of the biquadratic field \(K = L^{\langle r \rangle}\), the fixed field of the normal cyclic subgroup \(\langle r \rangle\) of order \(4\). The conductor of each ramified field over a quadratic subfield \(N_i \in \{\mathbb{Q}(\sqrt{-1}), \mathbb{Q}(\sqrt{2}), \mathbb{Q}(\sqrt{-2}), \mathbb{Q}(\sqrt{-10})\}\) is supported on primes listed in Table~\ref{tab:C4OD8-ram}. Moreover, the one \(C_4 \circ D_8\) extension that is the Hilbert class field of \(K=\mathbb{Q}(\sqrt{-3}, \sqrt{-30})\) is treated in Theorem~\ref{thm:C4OD8-un}.

\item \textbf{\(C_2 \times D_8\)-extensions (\(34\) fields).}
There are \(34\) imaginary non-CM Galois extensions of degree \(16\) with class number one and Galois group
\[
C_2 \times D_8
= \langle a, b, c \mid a^2 = b^4 = c^2 = 1,\ ab = ba,\ ac = ca,\ cbc = b^{-1} \rangle.
\]
Let \(N_1,N_2,N_3\) be the three quadratic subfields of the biquadratic field \(K = L^{\langle b \rangle}\). The conductors of \(L\) over the quadratic subfields \(N_i\) are listed in Table~\ref{tab:C2D8}.

\item \textbf{\(C_2^2 \rtimes C_4\)-extensions (\(6\) fields).}
There are \(6\) imaginary non-CM Galois extensions of degree \(16\) with class number one and Galois group
\[
C_2^2 \rtimes C_4
= \langle a, b, c \mid a^2 = b^2 = c^4 = 1,\ cac^{-1} = ab,\ ab = ba,\ bc = cb \rangle.
\]
Let \(N_1,N_2,N_3\) be the three quadratic subfields of the biquadratic field \(K = L^{\langle b, c^2 \rangle}\). The conductors of \(L\) over the only possible base fields \(\mathbb{Q}(\sqrt{-1})\), \(\mathbb{Q}(\sqrt{-2})\), and \(\mathbb{Q}(\sqrt{-3})\) are listed in Table~\ref{tab:C22C4}.

\end{enumerate}

The six families account for
\[
45 + 11 + 1 + 7 + 34 + 6 = 104
\]
fields in total, giving a complete classification of all imaginary non-CM Galois extensions of degree \(16\) with class number one.

\subsection*{The degree-\(32\) setting and higher \(2\)-power degrees}

Our methods extend to dihedral extensions of arbitrary \(2\)-power degree. For degree \(32\), we prove that there are exactly \(48\) imaginary non-CM dihedral \(D_{32}\)-fields with class number one; of these, \(43\) arise as Hilbert class fields of imaginary quadratic fields. In all remaining \(5\) cases, the conductor of the cyclic extension over the imaginary quadratic subfield \(N\) is supported on the prime above \(2\), with exponent \(16\), \(32\), or \(64\); the possible base fields are
\[
\mathbb{Q}(\sqrt{-2}),\quad
\mathbb{Q}(\sqrt{-3}),\quad
\mathbb{Q}(\sqrt{-67}).
\]
This is particularly striking in light of \cite[Theorem~10(b)]{lou7}, which asserts that no imaginary \(2\)-power dihedral CM field of degree greater than \(16\) has class number one.

Moreover, we show that for degrees \(2^n \geq 64\) (i.e., \(n \geq 6\)), either the field lies in the ray class field of one of the fields
\[
\mathbb{Q}(\sqrt{-2}),\quad
\mathbb{Q}(\sqrt{-3}),\quad
\mathbb{Q}(\sqrt{-67})
\]
whose conductor is supported on primes above \(2\), or it is the Hilbert class field of its imaginary quadratic subfield \(N\), and the class group of \(N\) is cyclic of order \(2^{n-1}\). Thus, for sufficiently large \(2\)-power degree, the existence of such fields is entirely determined by the class group structure of imaginary quadratic fields.

   \subsection*{Organization of the paper}
   The paper is organized as follows: Section~\ref{sec:preliminaries} provides the Galois-theoretic preliminaries and eliminates inadmissible groups. In Section~\ref{sec:ram-Lmax}, we develop the ramification dichotomy for $L/L_{\max}$. Sections~\ref{sec:suitable_N} and \ref{sec:conductor_bounds} establish the choice of quadratic base field and conductor bounds. The computational algorithm and classification tables are detailed in Section~\ref{sec:all-degree16-algorithm} and Section~\ref{sec:computations}. Moreover, Section~\ref{sec:consequences} addresses extensions to degree~$32$ and higher $2$-power dihedral fields.

\section{Notation and Conventions}
\label{sec:notation}

All number fields are regarded as subfields of~$\C$.
For a number field $M$ we write:
\begin{itemize}
  \item $d_M$ for the absolute discriminant of $M$;
  \item $h_M$ for the absolute class number of $M$;
\end{itemize}

For an extension of number fields $M/M'$:
\begin{itemize}
  \item $\mathfrak{d}_{M/M'}$ denotes the relative discriminant ideal;
  \item $\cN_{M/M'}$ denotes the relative norm map;
  \item $\cF_{M/M'}$ denotes the conductor of $M/M'$;
  \item $\cF_{0,M/M'}$ and $\cF_{\infty,M/M'}$ denote the finite and
        infinite parts of the conductor, respectively;
  \item $R_{\cF}(M')$ denotes the ray class group of $M'$ of conductor
        $\cF$;
  \item $\Hil(M')$ denotes the Hilbert class field of $M'$.
\end{itemize}

We use $C_n$ for the cyclic group of order~$n$.
For a field $M$ and an element $g$ of its associated Galois group,
we write $L^{\langle g\rangle}$ for the fixed subfield of $L$
under the subgroup generated by~$g$.

\section{Non-CM Galois Group Structure}
\label{sec:preliminaries}

We begin by recalling the definition of a CM-field.

\begin{definition}
A number field \(K\) is called a \emph{CM-field} if it is a totally imaginary
quadratic extension of a totally real subfield \(K^+\), i.e.,
\([K:K^+]=2\), \(K\) is totally imaginary, and \(K^+\) is totally real.
A number field that is not a CM-field is called \emph{non-CM}.

For a totally imaginary Galois number field \(K\), this is equivalent to
complex conjugation not being central in \(\operatorname{Gal}(K/\mathbb{Q})\).
See \cite[Lemma 2(ii)]{LOO}.
\end{definition}

\begin{lemma}\label{lem:CM abelian}
All imaginary abelian number fields, as well as imaginary degree-\(16\) fields
with Galois group isomorphic to \(C_2\times Q_8\), \(C_4 \rtimes C_4\), or
\(Q_{16}\), are CM-fields.
\end{lemma}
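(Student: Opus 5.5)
The plan is to reduce everything to the group-theoretic criterion recalled in the Definition: for a totally imaginary Galois field $K$, being CM is equivalent to complex conjugation being central in $\operatorname{Gal}(K/\mathbb{Q})$ (cf.\ \cite[Lemma 2(ii)]{LOO}). First I note that an imaginary Galois field is automatically totally imaginary. All embeddings of $K$ have the same image, so if one embedding is non-real, all are. Hence the restriction $c$ of complex conjugation is a nontrivial element of order $2$ in $G=\operatorname{Gal}(K/\mathbb{Q})$. It then suffices to show that in each listed group \emph{every} involution is central, since then $c$ is central whatever it is.

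For completeness I would also recall why centrality suffices. Put $K^+=K^{\langle c\rangle}$. For any embedding $\sigma$, the complex conjugation attached to $\sigma$ is $\sigma^{-1} c\,\sigma = c$. Hence every embedding of $K^+$ is real, so $K^+$ is totally real with $[K:K^+]=2$, and $K$ is CM.

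The group-theoretic verification goes case by case.
\begin{itemize}
\item \emph{Abelian} $G$: every element is central, so there is nothing to check.
\item $Q_{16}$: a generalized quaternion group has a unique element of order $2$, namely $a^4$ in $\langle a,b\mid a^8=1,\ b^2=a^4,\ bab^{-1}=a^{-1}\rangle$. A unique involution is fixed by all conjugations, so it is central.
\item $C_2\times Q_8$: since $Q_8$ has $-1$ as its only involution, the elements of order dividing $2$ are exactly $(\varepsilon,\pm1)$ with $\varepsilon\in C_2$. These lie in $C_2\times Z(Q_8)=Z(C_2\times Q_8)$.
\item $C_4\rtimes C_4=\langle x,y\mid x^4=y^4=1,\ yxy^{-1}=x^{-1}\rangle$: every element can be written $x^iy^j$. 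A short computation shows the involutions are exactly $x^2,\ y^2,\ x^2y^2$. Indeed, $(x^iy^j)^2=x^{i+(-1)^j i}y^{2j}$, which forces $j$ even and then $i$ even, or $j$ odd and then $y^{2j}\neq 1$. Now $x^2$ is central because $yx^2y^{-1}=x^{-2}=x^2$. Likewise $y^2$ is central because $y^2xy^{-2}=x$. Hence all involutions lie in $Z(G)=\langle x^2,y^2\rangle$.
\end{itemize}

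In each case $c$ is central, and the criterion gives that $K$ is CM. The only step needing care is identifying the presentation of $C_4\rtimes C_4$, i.e.\ the unique nonsplit-faithful action (SmallGroup$(16,4)$), and enumerating its involutions correctly. This is where I expect the main, though minor, obstacle to lie; the rest is immediate.
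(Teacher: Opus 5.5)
Your proposal is correct and follows the paper's proof: in each listed group every involution is central, so complex conjugation is central and its fixed field is a totally real subfield of index $2$. Your argument via $\sigma^{-1}c\,\sigma=c$ for that last step is more precise than the paper's appeal to $K$ being totally imaginary. One harmless wording slip: the inversion action defining $C_4\rtimes C_4$ is neither ``nonsplit'' nor faithful (a semidirect product is split, and $y^2$ acts trivially), but your presentation and your list of involutions $x^2,\,y^2,\,x^2y^2$ are right, so the proof is unaffected.
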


\begin{proof}
Let \(K\) be such a field, and let
\(\tau\in \operatorname{Gal}(K/\mathbb{Q})\) be complex conjugation. Since
\(\tau\) has order \(2\), it suffices to show that every element of order \(2\)
in the listed groups is central. For abelian groups this is immediate. For the
non-abelian groups, we have the following.

\begin{enumerate}
  \item For
        \(C_4 \rtimes C_4 = \langle a, b \mid a^4=b^4=1,\; bab^{-1}=a^{-1}\rangle\),
        the centre is \(\langle a^2, b^2\rangle\); the only elements of order
        \(2\) are \(a^2, b^2, a^2b^2\), all of which are central.
  \item For
        \(C_2 \times Q_8 = \langle a, b, c \mid a^2 = b^4 = 1,\ c^2 = b^2,\
        ab = ba,\ ac = ca,\ cbc^{-1} = b^{-1} \rangle\),
        the centre \(Z(G)=\langle a, b^2 \rangle\) contains all elements of
        order \(2\).
  \item For
        \(Q_{16} = \langle a, b \mid a^8=1,\; b^2=a^4,\; bab^{-1}=a^{-1}\rangle\),
        the centre is \(\langle a^4\rangle\), which is the unique element of
        order \(2\).
\end{enumerate}

Thus \(\tau\) is central. Its fixed field \(K^{\langle \tau\rangle}\) is
therefore a Galois extension of \(\mathbb{Q}\) of degree
\([K:\mathbb{Q}]/2\). Since \(K\) is totally imaginary, this fixed field is
totally real. Hence \(K\) is a CM-field.
\end{proof}

By Lemma~\ref{lem:CM abelian} and the fact that
\begin{align*}
& C_{16},\quad C_4^2,\quad C_2\times C_8,\quad C_2^2\times C_4, \quad C_2^4,\quad C_2\times Q_8,\quad
  C_4 \rtimes C_4,\quad Q_{16},\\
& D_{16},\quad SD_{16},\quad M_4(2),\quad
  C_4\circ D_8,\quad C_2 \times D_8,\quad C_2^2 \rtimes C_4
\end{align*}
are all possible Galois groups of degree-\(16\) fields, we obtain the following
result.

\begin{theorem}\label{thm:non-cm-galois-structure}
The only possible imaginary non-CM fields of degree \(16\) are those with
Galois groups isomorphic to
\[
D_{16},\quad SD_{16},\quad M_4(2),\quad C_4\circ D_8,\quad
C_2 \times D_8,\quad C_2^2 \rtimes C_4.
\]
\end{theorem}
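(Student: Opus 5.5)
The plan is to combine the list of all groups of order $16$ with Lemma~\ref{lem:CM abelian}, and to check the remaining groups directly. Throughout, "field of degree $16$" means a Galois extension $L/\mathbb{Q}$ of degree $16$, as in the surrounding discussion. So $G=\operatorname{Gal}(L/\mathbb{Q})$ is a group of order $16$.

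First, I will recall that there are exactly fourteen groups of order $16$ up to isomorphism. Five are abelian: $C_{16}$, $C_4^2$, $C_2\times C_8$, $C_2^2\times C_4$ and $C_2^4$. Nine are non-abelian: $C_2\times Q_8$, $C_4\rtimes C_4$, $Q_{16}$, $D_{16}$, $SD_{16}$, $M_4(2)$, $C_4\circ D_8$, $C_2\times D_8$ and $C_2^2\rtimes C_4$. This can be cited from the standard classification (e.g.\ the SmallGroups library, or Burnside's list), and it is exactly the list displayed before the theorem. This step carries the most weight, since completeness of the list is what makes the statement true. I would not reprove it; I would cite it and perhaps cross-check the count of $14$.

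Next, suppose $L$ is imaginary. Since $L/\mathbb{Q}$ is Galois, $L$ is totally imaginary, and complex conjugation $\tau$ is a nontrivial element of order $2$ in $G$. If $G$ is one of the five abelian groups, or $C_2\times Q_8$, $C_4\rtimes C_4$ or $Q_{16}$, then Lemma~\ref{lem:CM abelian} shows that $L$ is CM. These eight groups are therefore excluded, and any imaginary non-CM $L$ of degree $16$ must have Galois group among the six groups in the statement.

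Finally, to justify that these six groups genuinely remain (so the list is sharp at the group level), I would exhibit in each a non-central involution. In $D_{16}$ and $SD_{16}$ the reflection $b$ does not commute with $a$. In $M_4(2)$ we have $bab=a^5\neq a$. In $C_4\circ D_8$ we have $srs=r^{-1}\ne r$. In $C_2\times D_8$ we have $cbc=b^{-1}\neq b$. In $C_2^2\rtimes C_4$ we have $cac^{-1}=ab\neq a$. By the criterion in the Definition (from \cite[Lemma 2(ii)]{LOO}), if $\tau$ is such an element then $L$ is non-CM. Strictly, though, the theorem only asserts the necessary direction, so this last step is a sanity remark. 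Actual existence is then supplied by the examples in the later tables.
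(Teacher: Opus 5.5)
Your proposal is correct and is essentially the paper's argument: the paper likewise takes the list of the fourteen groups of order $16$ (implicitly assuming $L/\mathbb{Q}$ is Galois) and uses Lemma~\ref{lem:CM abelian} to discard the five abelian groups together with $C_2\times Q_8$, $C_4\rtimes C_4$ and $Q_{16}$, which leaves exactly the six listed groups. Your closing check that each of the six groups has a non-central involution goes beyond the paper but is not needed, since the theorem only asserts the necessary direction.
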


\section{The Ramification Dichotomy}
\label{sec:ram-Lmax}

Throughout the rest of this paper, let \(L/\mathbb{Q}\) be an imaginary non-CM
field of degree \(2^n\) with class number \(h_L=1\). If \(\langle a \rangle\)
is a central subgroup of order \(2\) in \(G:=\operatorname{Gal}(L/\mathbb{Q})\),
then we define
\[
L_{\max} := L^{\langle a \rangle}.
\]
Thus \([L_{\max}:\mathbb{Q}]=2^{n-1}\), and \(L/L_{\max}\) is a quadratic
extension.

We first recall two lemmas, \ref{JMasley} and \ref{lem:Monodromy}, that will be
used repeatedly.

\begin{lemma}\label{JMasley} \cite[Corollary 2.2]{JMasley}
For a finite extension \(M/M'\), one has
\[
h_{M'} \mid [M:M']\, h_M.
\]
Moreover, if \(M/M'\) is totally ramified at a finite place or ramified at an
infinite place, then
\[
h_{M'} \mid h_M.
\]
\end{lemma}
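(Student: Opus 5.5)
The natural route is class field theory, through the norm map on ideal class groups. Write \(\mathcal{N}=\mathcal{N}_{M/M'}\) and consider the map \(\Cl(M)\to\Cl(M')\) it induces. If \(H\) is the maximal unramified abelian extension of \(M'\) contained in \(M\), Artin reciprocity identifies the cokernel with \(\Gal(H/M')\). More precisely, the image \(\mathcal{N}(\Cl(M))\) is the subgroup of \(\Cl(M')\) corresponding to \(H=M\cap\Hil(M')\). This gives
\[
[\Cl(M'):\mathcal{N}(\Cl(M))]=[M\cap\Hil(M'):M'].
\]
For non-Galois \(M/M'\) I would still argue this way. The norm from \(M\) lands in the norm group of \(M\cap \Hil(M')\), and that is the right subgroup by the norm limitation theorem (the norm group of \(M\) equals the norm group of its maximal abelian subextension).

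From this, the first statement is immediate. We have \(h_{M'}=|\mathcal{N}(\Cl(M))|\cdot[M\cap\Hil(M'):M']\). The first factor divides \(h_M\), since it is the order of a quotient of \(\Cl(M)\). The second factor divides \([M:M']\), since it is the degree of an intermediate extension. Hence \(h_{M'}\mid [M:M']\,h_M\).

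For the second statement I would show that \(M\cap\Hil(M')=M'\) under either hypothesis. Suppose some finite prime \(\mathfrak{p}\) of \(M'\) is totally ramified in \(M\). Then it is totally ramified in every intermediate field, in particular in \(M\cap\Hil(M')\). But that field is unramified over \(M'\), so it must equal \(M'\). Suppose instead some infinite place is ramified, i.e. a real place of \(M'\) becomes complex in \(M\). The Hilbert class field (in the narrow-free, wide sense) is unramified at infinite places too, so again the intersection is \(M'\). In either case the norm map on class groups is surjective, so \(h_{M'}\mid h_M\).

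The main obstacle is justifying that the image of the norm map is exactly the subgroup attached to \(M\cap\Hil(M')\) when \(M/M'\) is not abelian or not Galois. This is where the norm limitation theorem is needed. Alternatively, I would cite Masley's corollary directly, as the paper does, since the statement is quoted from there.
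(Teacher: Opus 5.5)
Your proof of $h_{M'}\mid[M:M']\,h_M$ and of the finite-place clause is correct. It is the standard class-field-theoretic argument; the paper gives no proof of its own and only cites Masley.

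The gap is in the infinite-place clause. Total ramification at a finite prime is inherited by every intermediate field, but ramification at a real place $v$ of $M'$ is not. If $[M:M']>2$, the fact that $v$ becomes complex in $M$ says nothing about whether it becomes complex in $E=M\cap\Hil(M')$. So your conclusion $E=M'$ does not follow, and it genuinely fails. Take $M'=\mathbb{Q}(\sqrt{10})$, with $h_{M'}=2$ and totally real Hilbert class field $\Hil(M')=\mathbb{Q}(\sqrt2,\sqrt5)$. Take $M=\Hil(M')(\sqrt{-1})=\mathbb{Q}(\zeta_8,\sqrt5)$. Both real places of $M'$ ramify in $M$, yet $M\cap\Hil(M')=\Hil(M')\neq M'$. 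Since $h_M=1$ (Theorem~\ref{thm:c2cubed_base}), even the conclusion fails: $2\nmid 1$. So the infinite-place clause, read literally for an arbitrary finite extension $M/M'$, is false.

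What is true, and what your argument does prove, is the version with total ramification at the infinite place. This forces $[M:M']=2$. Then $M\not\subseteq\Hil(M')$, because $\Hil(M')/M'$ is unramified at infinity. Since the only intermediate fields are $M'$ and $M$, you get $M\cap\Hil(M')=M'$. This quadratic case is all the paper needs: its applications concern quadratic extensions such as $L/L_{\max}$, and Corollary~\ref{Cor:h-Lmax} only uses the finite-place clause.

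As a side remark, the step you flag as the main obstacle is easier than you suggest. You do not need the norm limitation theorem. Functoriality of the Artin map gives $\operatorname{Art}_{M\Hil(M')/M}(\mathfrak{A})|_{\Hil(M')}=\operatorname{Art}_{\Hil(M')/M'}(\mathcal{N}_{M/M'}\mathfrak{A})$. Combine this with surjectivity of $\Cl(M)$ onto $\Gal(M\Hil(M')/M)$, which holds because that is an unramified abelian extension of $M$. Add the restriction isomorphism $\Gal(M\Hil(M')/M)\cong\Gal(\Hil(M')/M\cap\Hil(M'))$. Together these identify the image of the norm exactly, for arbitrary finite $M/M'$.
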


From Lemma~\ref{JMasley}, we obtain the following corollary.

\begin{corollary}\label{Cor:h-Lmax}
If \(L/L_{\max}\) is ramified at some finite place and \(h_L=1\), then
\(h_{L_{\max}}=1\). Moreover, if \(L\) is imaginary and non-CM, then
\(L_{\max}\) is imaginary.
\end{corollary}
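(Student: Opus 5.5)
The corollary has two assertions, and I will treat them separately. Each follows from an earlier result with little extra work.

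\textbf{First assertion.} I apply Lemma~\ref{JMasley} with \(M=L\) and \(M'=L_{\max}\). Since \([L:L_{\max}]=2\), a ramified prime \(\mathfrak{P}\) of \(L_{\max}\) has ramification index \(e=2=[L:L_{\max}]\). Hence \(L/L_{\max}\) is totally ramified at \(\mathfrak{P}\). The second part of Lemma~\ref{JMasley} then gives \(h_{L_{\max}}\mid h_L=1\), so \(h_{L_{\max}}=1\). The only point to check is the identification ``ramified at a finite place'' \(=\) ``totally ramified at a finite place'', which holds because the degree is prime.

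\textbf{Second assertion.} I argue by contradiction. Suppose \(L_{\max}\) is not imaginary, i.e.\ it is totally real; it cannot be mixed because \(L_{\max}/\mathbb{Q}\) is Galois. By construction \(L_{\max}=L^{\langle a\rangle}\) with \(\langle a\rangle\) central of order \(2\), so \([L:L_{\max}]=2\).

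Next I show \(L\) is totally imaginary. Since \(L\) is Galois over \(\mathbb{Q}\) and imaginary, all its embeddings are complex, so it is totally imaginary. Therefore \(L\) is a totally imaginary quadratic extension of the totally real field \(L_{\max}\), which is exactly the definition of a CM-field. This contradicts the hypothesis that \(L\) is non-CM.

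Equivalently, in group-theoretic terms: if \(L_{\max}\) is totally real, complex conjugation \(\tau\) fixes \(L_{\max}\), so \(\tau\in\langle a\rangle\). As \(\tau\neq 1\), this forces \(\tau=a\), which is central. By the criterion recalled in the definition of CM-fields, \(L\) would then be CM.

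\textbf{Main obstacle.} There is no genuine obstacle here. The only subtlety is being careful that ``imaginary'' for a Galois field means totally imaginary, and that a Galois field \(L_{\max}\) is either totally real or totally imaginary.
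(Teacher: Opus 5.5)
Your proposal is correct and follows the paper's own proof: in a degree-$2$ extension ramification at a finite prime is total, so Lemma~\ref{JMasley} gives $h_{L_{\max}} \mid h_L = 1$, and a totally real $L_{\max}$ would make the totally imaginary $L$ a CM field. Your additional group-theoretic reformulation (complex conjugation would equal the central involution $a$) is an equivalent restatement of the same argument.
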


\begin{proof}
Since \(L/L_{\max}\) has degree \(2\), ramification at a finite place implies
total ramification. Hence, by Lemma~\ref{JMasley},
\(h_{L_{\max}}\mid h_L\). Since \(h_L=1\), we get \(h_{L_{\max}}=1\).

For the second claim, if \(L_{\max}\) were totally real, then \(L\) would be a
quadratic extension of a totally real field. Since \(L\) is totally imaginary,
this would make \(L\) a CM field, contradicting the non-CM assumption. Hence
\(L_{\max}\) is totally imaginary.
\end{proof}

Moreover, we use the following lemma to determine whether \(L/L_{\max}\) can be
unramified.

\begin{lemma}[Chebotarev's Monodromy Theorem]\label{lem:Monodromy}
Let \(k \subseteq K\) be number fields such that \(K/\mathbb{Q}\) and
\(k/\mathbb{Q}\) are normal, and \(K/k\) is unramified at all finite places.
Let \(m\) be the least positive integer such that \(g^m=1\) for every
\(g\in \operatorname{Gal}(k/\mathbb{Q})\). Then
\(\operatorname{Gal}(K/\mathbb{Q})\) is generated by the elements \(\sigma\)
which are not contained in \(\operatorname{Gal}(K/k)\) and satisfy
\(\sigma^m=1\).
\end{lemma}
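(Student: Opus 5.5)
The plan is to show that the inertia groups of $K/\mathbb{Q}$ already generate $\operatorname{Gal}(K/\mathbb{Q})$, and that every nontrivial inertia element satisfies both required conditions. Write $G=\operatorname{Gal}(K/\mathbb{Q})$ and $H=\operatorname{Gal}(K/k)$. Since $k/\mathbb{Q}$ is normal, $H$ is normal in $G$ and $G/H\cong\operatorname{Gal}(k/\mathbb{Q})$.

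\emph{Step 1: $G$ is generated by its inertia groups.} For each finite prime $\mathfrak{P}$ of $K$, let $I_{\mathfrak{P}}\subseteq G$ be its inertia group, and let $T\subseteq G$ be the subgroup generated by all the $I_{\mathfrak{P}}$. Conjugation permutes the inertia groups, since $\sigma I_{\mathfrak{P}}\sigma^{-1}=I_{\sigma\mathfrak{P}}$. Hence $T$ is normal in $G$, and the fixed field $K^T$ is Galois over $\mathbb{Q}$. The inertia group of $K^T/\mathbb{Q}$ at the prime below $\mathfrak{P}$ is the image of $I_{\mathfrak{P}}$ in $G/T$, which is trivial. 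So $K^T/\mathbb{Q}$ is unramified at every finite prime. By Minkowski's theorem (every number field other than $\mathbb{Q}$ has $|d|>1$), this forces $K^T=\mathbb{Q}$, and therefore $T=G$.

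\emph{Step 2: each inertia group embeds into $\operatorname{Gal}(k/\mathbb{Q})$.} The inertia group of $\mathfrak{P}$ in the extension $K/k$ is $I_{\mathfrak{P}}\cap H$. This intersection is trivial because $K/k$ is unramified at all finite places. Hence the restriction map $I_{\mathfrak{P}}\to G/H\cong\operatorname{Gal}(k/\mathbb{Q})$ is injective. Two consequences follow.
\begin{itemize}
\item Every nontrivial $\sigma\in I_{\mathfrak{P}}$ lies outside $H$.
\item The image of $\sigma$ has order dividing $m$, so $\sigma^m$ maps to $1$. By injectivity on the subgroup $I_{\mathfrak{P}}$, which contains $\sigma^m$, we get $\sigma^m=1$.
\end{itemize}

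\emph{Step 3: conclusion.} The set $S$ of nontrivial elements of all the inertia groups generates $T=G$ by Step 1. By Step 2, $S$ consists of elements $\sigma\notin\operatorname{Gal}(K/k)$ with $\sigma^m=1$. Therefore the elements described in the lemma generate $G$. If $G$ is trivial, the statement is vacuous.

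The only substantive input is Step 1, which rests on Minkowski's theorem that $\mathbb{Q}$ has no nontrivial extension unramified at all finite primes. Ramification at the infinite place plays no role: only finite inertia groups are used, which matches the hypothesis that $K/k$ is unramified only at finite places. The main care needed is in Step 2, where one must use that the injectivity holds on the subgroup $I_{\mathfrak{P}}$ containing $\sigma^m$, not merely on $\sigma$ itself.
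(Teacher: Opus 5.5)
Your proof is correct and follows the same route as the paper, which gives no argument of its own but cites Lemmermeyer's Corollary~1, whose proof is this inertia-group argument: by Minkowski's theorem the finite inertia groups generate $\operatorname{Gal}(K/\mathbb{Q})$, and since $K/k$ is unramified at finite places each inertia group meets $\operatorname{Gal}(K/k)$ trivially and so embeds in $\operatorname{Gal}(k/\mathbb{Q})$. You have in effect written out the self-contained proof the paper omits, and you correctly use only finite inertia groups, which matches the ``unramified at all finite places'' hypothesis as stated.
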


\begin{proof}
See \cite[Corollary~1]{lemmermeyer1997unramified}.
\end{proof}

\subsection{Monodromy Obstructions for \(SD_{16}\), \(M_4(2)\),
\(C_2 \times D_8\), and \(C_2^2\rtimes C_4\)}

\begin{theorem}[Monodromy obstruction]\label{thm:monodromy_obstruction}
Let \(L/\mathbb{Q}\) be an imaginary non-CM field of degree \(16\) with class
number one and
\[
G\cong SD_{16}, \quad G\cong M_4(2), \quad\text{or}\quad G\cong C_2 \times D_8.
\]
Then the quadratic extension \(L/L_{\max}\) is ramified at some finite place.
\end{theorem}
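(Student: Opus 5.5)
The plan is to argue by contradiction. Suppose $L/L_{\max}$ is unramified at every finite place. It is also unramified at the infinite places, since $L_{\max}$ is totally imaginary by Corollary~\ref{Cor:h-Lmax}. Write $\langle z\rangle$ for the central subgroup of order $2$ with $L_{\max}=L^{\langle z\rangle}$.

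For every prime $p$ and every prime $\mathfrak P$ of $L$ above $p$, the inertia group $I_{\mathfrak P}\subseteq G$ then maps injectively to $\operatorname{Gal}(L_{\max}/\mathbb{Q})$, so
\[
I_{\mathfrak P}\cap\langle z\rangle = 1 .
\]
On the other hand, $\mathbb{Q}$ has no nontrivial unramified extensions, so the inertia groups generate $G$. This is the mechanism behind Lemma~\ref{lem:Monodromy}, and I will use it in this refined form rather than through the exponent condition. For $SD_{16}$ the exponent condition alone does not suffice: every $a^ib$ has order at most $4$, so those elements already generate $G$.

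A subgroup $H$ with $H\cap\langle z\rangle=1$ cannot contain any element having a power equal to $z$. The key step is therefore purely group-theoretic: show that the elements of $G$ with no power equal to $z$ generate a proper subgroup. Since every inertia group consists of such elements, this contradicts $\langle I_{\mathfrak P}\rangle = G$.

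\textbf{$SD_{16}$.} The centre is $\langle a^4\rangle$, so $z=a^4$. Every nontrivial power $a^i$ with $i\not\equiv 0 \pmod 8$ has a power equal to $a^4$. We have $(a^ib)^2=a^{4i}$. For $i$ odd this equals $a^4$, so those elements are excluded. The only admissible nontrivial elements are $a^ib$ with $i$ even, and they generate $\langle a^2,b\rangle\cong D_8$, a proper subgroup.

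\textbf{$M_4(2)$.} The centre is $\langle a^2\rangle\cong C_4$, so $z=a^4$. Every $a^i$ with $i\not\equiv 0\pmod 8$ powers to $a^4$. We have $(a^ib)^2=a^{6i}$, and this, or its square, equals $a^4$ unless $i\equiv 0 \pmod 4$. The admissible elements $b$ and $a^4b$ generate a group of order $4$, again proper.

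\textbf{$C_2\times D_8$.} Here I expect the main obstacle, because the centre $\langle a,b^2\rangle$ contains three involutions and each choice of $z$ must be treated.
\begin{itemize}
\item For $z=b^2$ or $z=ab^2$, the same count works. Every element of order $4$ ($b^{\pm1}$, $ab^{\pm1}$) squares to $b^2$. The admissible elements then lie in the set of involutions $\ne z$. I will check whether they generate a proper subgroup; if they do not, I will impose the additional constraint that conjugates of a single inertia group must also avoid $z$.
\item For $z=a$, I expect the naive count to fail, since $b$, $ab$ and $c$ all avoid $a$ and together generate $G$.
\end{itemize}
In the case $z=a$ I would instead use $G=\langle a\rangle\times D_8$. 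Then $L=L_{\max}F$, where $F=L^{D_8}$ is quadratic. Unramifiedness of $L/L_{\max}$ forces every prime ramified in $F$ to be ramified in $L_{\max}$ with even index compatible with $F$, which in turn forces specific inertia groups of the shape $\langle ab\rangle$ or $\langle ac\rangle$.

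I would then combine this with the class-number constraint from Lemma~\ref{JMasley}. Unramifiedness gives $L\subseteq\Hil(L_{\max})$ and $h_{L_{\max}}\mid 2h_L=2$, so $h_{L_{\max}}=2$ and $L=\Hil(L_{\max})$. Looking at the quadratic subfield $F$ and the intermediate biquadratic fields, the inclusion $F\subseteq \Hil(L_{\max})$ should force, via genus theory of the quadratic subfields, a quadratic subfield of $L_{\max}$ to have even class number in a way incompatible with $h_{L_{\max}}=2$. This last step is where I am least certain the argument closes, and it is the part I would work out first.
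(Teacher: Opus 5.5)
Your arguments for $SD_{16}$ and $M_4(2)$ are correct and are essentially the paper's. The paper first notes that $L/L^{\langle a\rangle}$ is unramified, because every nontrivial subgroup of $\langle a\rangle$ contains $a^4$. It then applies Lemma~\ref{lem:Monodromy} with the quadratic field $k=L^{\langle a\rangle}$, so $m=2$, and the admissible elements are exactly your sets $\{b,a^2b,a^4b,a^6b\}$ and $\{b,a^4b\}$. Your remark that the exponent condition is too weak applies only to the choice $k=L_{\max}$.

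\textbf{The gap is $C_2\times D_8$.}
\begin{itemize}
\item For this group the paper fixes $L_{\max}=L^{G'}=L^{\langle b^2\rangle}$, so $z=b^2$ is the case that matters.
\item Your claim that ``the same count works'' for $z=b^2$ is false. $D_8=\langle b,c\rangle$ has five involutions $b^2,c,bc,b^2c,b^3c$. The involutions $c$ and $bc$ avoid $b^2$, yet $\langle c,bc\rangle=\langle b,c\rangle$. Together with $a$, the admissible elements therefore generate all of $G$.
\item For $z=ab^2$ it is worse: $b^{\pm1}$ and $ab^{\pm1}$ square to $b^2\neq ab^2$, so they are admissible too.
\item Your fallback via conjugates of an inertia group is vacuous, because $z$ is central.
\item The genus-theory step for $z=a$ is a hope, not an argument.
\end{itemize}
You cannot borrow the paper's proof here either. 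Its list of involutions of $C_2\times D_8$ omits $bc,b^3c,abc,ab^3c$. Once these are included, the involutions outside $\langle b\rangle$ generate $G$, so Lemma~\ref{lem:Monodromy} gives no contradiction.

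\textbf{No inertia argument can work, and this case of the statement appears to be false.}

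\emph{General mechanism.} Let $k$ be imaginary quadratic with $\Cl(k)\cong C_2\times C_4$, and put $L=\Hil(k)$.
\begin{itemize}
\item $\Gal(L/\mathbb{Q})\cong\Cl(k)\rtimes C_2\cong C_2\times D_8$, with complex conjugation acting by inversion on $\Cl(k)$.
\item Hence complex conjugation is not central, and $L$ is imaginary non-CM.
\item $L/L^{\langle z\rangle}$ is unramified for every central $z$, because $L/k$ is unramified.
\end{itemize}

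\emph{The case $k=\mathbb{Q}(\sqrt{-66})$.} Its forms of discriminant $-264$ show $\Cl(k)\cong C_2\times C_4$. Write $\varepsilon_n$ for the fundamental unit of $\mathbb{Q}(\sqrt n)$.
\begin{itemize}
\item Kuroda's formula gives class number $2$ for $L^{\langle b^2\rangle}=\mathbb{Q}(\sqrt{-2},\sqrt{-3},\sqrt{-11})$. The unit index is $8$, coming from $\sqrt{\varepsilon_6\varepsilon_{22}}$, $\sqrt{\varepsilon_6\varepsilon_{33}}$ and $\sqrt{-\varepsilon_{33}}=(6+\sqrt{33})/\sqrt{-3}$.
\item Its $2$-class group is therefore cyclic, so its $2$-class tower stops at its Hilbert class field $L$, and $h_L$ is odd.
\item For odd $p$, the isotypic decomposition of $\Cl(L)\otimes\mathbb{Z}_p$ shows that the odd part of $h_L$ is the square of the odd part of $h(F_1)h(F_2)$. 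Here all seven quadratic subfields have odd part $1$.
\item $F_1=\mathbb{Q}\bigl(\sqrt{-2},\sqrt{1+4\sqrt{-2}}\bigr)$ and $F_2=\mathbb{Q}\bigl(\sqrt{-2},\sqrt{-5+2\sqrt{-2}}\bigr)$ are non-normal quartic subfields of the two $D_8$-subfields. Both have discriminant $2112$ and class number $1$ by the Minkowski bound.
\end{itemize}
So $\Hil(\mathbb{Q}(\sqrt{-66}))$ seems to be an imaginary non-CM $C_2\times D_8$-field with $h_L=1$ and $L/L_{\max}$ unramified. Confirm this numerically before trying to prove this case at all.
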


\begin{proof}
We treat the four groups separately.

\medskip
\noindent\textit{Case 1: \(G\cong SD_{16}\).}
Write
\(SD_{16}=\langle a,b \mid a^8=b^2=1,\; bab^{-1}=a^3\rangle\).
The unique central subgroup of order \(2\) is \(\langle a^4\rangle\), so
\(L_{\max}=L^{\langle a^4\rangle}\).

Assume, for contradiction, that \(L/L_{\max}\) is unramified. Let
\(k=L^{\langle a\rangle}\). Since \(L/k\) is cyclic of degree \(8\), any
nontrivial inertia subgroup would contain \(a^4\), forcing ramification in
\(L/L_{\max}\); hence \(L/k\) is unramified.

By Chebotarev's Monodromy Theorem, \(G\) must be generated by the involutions
outside \(\operatorname{Gal}(L/k)=\langle a\rangle\). The involutions outside
\(\langle a\rangle\) are
\[
b,\quad a^2b,\quad a^4b,\quad a^6b.
\]
A direct calculation shows that these generate
\[
\langle a^2,b\rangle \cong D_8,
\]
which is a proper subgroup of \(SD_{16}\) (order \(8\) versus \(16\)).
Contradiction.

\medskip
\noindent\textit{Case 2: \(G\cong M_4(2)\).}
Write \(M_4(2)=\langle a,b \mid a^8=b^2=1,\; bab=a^5\rangle\). Again
\(L_{\max}=L^{\langle a^4\rangle}\) and \(k=L^{\langle a\rangle}\).

If \(L/L_{\max}\) were unramified, then \(L/k\) would be unramified. The
involutions outside \(\langle a\rangle\) are
\[
b \quad\text{and}\quad a^4b,
\]
which generate \(\langle a^4,b\rangle\cong C_2\times C_2\), a proper subgroup
of \(M_4(2)\). Chebotarev's theorem gives a contradiction.

Hence in both cases \(L/L_{\max}\) must be ramified.

\medskip
\noindent\textit{Case 3: \(G\cong C_2 \times D_8\).}
Let \(L/\mathbb{Q}\) be an imaginary non-CM \(C_2\times D_8\)-field with
\(h_L=1\). The group is
\[
G = \langle a, b, c \mid a^2 = b^4 = c^2 = 1,\ ab = ba,\ ac = ca,\
cbc = b^{-1} \rangle.
\]

Let \(L_{\max}\) be the maximal abelian subfield of degree \(8\), and let
\(K=L^{\langle b \rangle}\), Since $G' = \langle b^2 \rangle$, we have  $K$ is a biquadratic subfield of \(L_{\max}\).

Assume for contradiction that \(L/L_{\max}\) is unramified. Then \(L/K\) is
unramified. By Chebotarev's Monodromy Theorem, \(G\) would be generated by the
involutions outside \(\operatorname{Gal}(L/K)=\langle b\rangle\).
The involutions of \(G\) are \(a\), \(c\), \(ac\), \(b^2\), \(b^2a\), \(b^2c\),
\(b^2ac\). Those lying outside \(\langle b\rangle\) are \(a\), \(c\), \(ac\),
\(b^2a\), \(b^2c\), \(b^2ac\).
A direct check shows that the subgroup they generate is contained in
\[
\langle a, c, b^2\rangle \cong C_2^3,
\]
which has order \(8\) and is a proper subgroup of \(C_2\times D_8\) (order
\(16\)). Contradiction. Hence \(L/L_{\max}\) must be ramified.

\end{proof}

\section{Quadratic Subfields with Abelian Relative Extensions}\label{sec:suitable_N}

In this section, we identify suitable quadratic subfields $N \subset L$ such that $L/N$ is an abelian extension of degree $8$. This enables the computation and classification of $L$ as a ray class field over $N$.

\begin{proposition}\label{prop:qua-subfield}
Let $L/\mathbb{Q}$ be a Galois extension whose Galois group $G$ is isomorphic to one of
\[
C_2 \times D_8,\qquad C_2^2 \rtimes C_4,\qquad \text{or}\qquad C_4 \circ D_8.
\]
Let $L_{\max} = L^{G'}$ denote the maximal abelian subextension of $L/\mathbb{Q}$ of degree~$8$, and let $K$ be the unique biquadratic subfield of $L_{\max}$ defined casewise below. Let $N_1, N_2, N_3$ be the three quadratic subfields of $K$.

Then every rational prime $p$ ramifying in $L_{\max}/\mathbb{Q}$ ramifies in at least one of the relative extensions $L/N_i$. Moreover, for each quadratic subfield \(N_i \in \{N_1, N_2, N_3\}\), the group \(\operatorname{Gal}(L/N_i)\) is abelian.

The explicit group presentations and subfields are as follows:
\begin{enumerate}
\item If $G \cong C_2 \times D_8 = \langle a, b, c \mid a^2=b^4=c^2=1,\; ab=ba,\; ac=ca,\; cbc=b^{-1}\rangle$:
\[
G' = \langle b^2 \rangle, \quad L_{\max} = L^{\langle b^2\rangle}, \quad K = L^{\langle b\rangle}.
\]
The quadratic subfield $N = L^{\langle a, b\rangle} \subset K$ satisfies $\operatorname{Gal}(L/N) \cong C_2 \times C_4$ \textup{(}abelian\textup{)}.

\item If $G \cong C_2^2 \rtimes C_4 = \langle a, b, c \mid a^2=b^2=c^4=1,\; ab=ba,\; cac^{-1}=ab,\; bc=cb\rangle$:
\[
G' = \langle b\rangle, \quad L_{\max} = L^{\langle b\rangle}, \quad K = L^{\langle b,c^2\rangle}.
\]
\textup{(}Alternatively, choosing the central subgroup $H = \langle c^2\rangle$ yields $L_{\max}^{(D_8)} = L^{\langle c^2\rangle}$ with $\operatorname{Gal}(L_{\max}^{(D_8)}/\mathbb{Q}) \cong D_8$, which shares the identical biquadratic subfield $K = L^{\langle b,c^2\rangle}$.\textup{)}

For all three quadratic subfields $N_i \subset K$, the relative extension $L/N_i$ is abelian, with $\operatorname{Gal}(L/N_i)$ isomorphic to $C_2 \times C_4$ or $C_2^3$.

\item If $G \cong C_4 \circ D_8 = \langle a,b,c \mid a^4=c^2=1,\; b^2=a^2,\; ab=ba,\; ac=ca,\; cbc=a^2b\rangle$:
\[
G' = \langle a^2\rangle, \quad L_{\max} = L^{\langle a^2\rangle}, \quad K = L^{\langle a\rangle}.
\]
For all three quadratic subfields $N_i \subset K$, the relative extension $L/N_i$ is abelian with $\operatorname{Gal}(L/N_i) \cong C_4 \times C_2$.
\end{enumerate}
\end{proposition}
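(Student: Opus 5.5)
The plan is to treat the two assertions separately: first the abelianness of \(\operatorname{Gal}(L/N_i)\), which is pure group theory, and then the ramification claim, which comes from comparing inertia groups. In each case \(K=L^{H}\) for a normal subgroup \(H\) of order \(4\) with \(G/H\cong C_2^2\). The three quadratic subfields \(N_i\) therefore correspond to the three index-\(2\) subgroups \(H_i\supset H\), and \(\operatorname{Gal}(L/N_i)=H_i\).

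For the group theory I would list the \(H_i\) explicitly and check commutativity on generators.

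\begin{enumerate}
\item For \(C_2\times D_8\) with \(H=\langle b\rangle\), the three subgroups are \(\langle a,b\rangle\cong C_2\times C_4\), \(\langle b,c\rangle\cong D_8\) and \(\langle b,ac\rangle\cong D_8\).
\begin{itemize}
\item Only the first is abelian, which is exactly why the statement names the single field \(N=L^{\langle a,b\rangle}\) in this case.
\item I therefore read "for each \(N_i\)" in case (1) as applying to that distinguished \(N\) only. The proof must note this caveat, since \(L/N_i\) is not abelian for the other two \(N_i\).
\end{itemize}
\item For \(C_2^2\rtimes C_4\) with \(H=\langle b,c^2\rangle\), the subgroups are \(\langle b,c\rangle\), \(\langle a,b,c^2\rangle\) and \(\langle ac,b\rangle\).
\begin{itemize}
\item In the first, \(b\) is central and \(\langle b,c\rangle\cong C_2\times C_4\).
\item In the second, \(c^2\) is central because \(c^2ac^{-2}=c(ab)c^{-1}=ab\cdot b=a\); hence \(\langle a,b,c^2\rangle\cong C_2^3\).
\item In the third, \((ac)^2=a\,cac^{-1}\,c^2=bc^2\) has order \(2\), so \(ac\) has order \(4\). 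Also \(b\) commutes with \(ac\), so this subgroup is \(C_2\times C_4\).
\end{itemize}
\item For \(C_4\circ D_8\) with \(H=\langle a\rangle\), the subgroups are \(\langle a,b\rangle\), \(\langle a,c\rangle\) and \(\langle a,bc\rangle\). Since \(a\) is central, each is generated by a central element together with one further element, hence abelian. A direct order computation identifies each as \(C_4\times C_2\).
\end{enumerate}

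For the ramification claim, take \(p\) ramified in \(L_{\max}/\mathbb{Q}\), choose a prime \(\mathfrak{P}\) of \(L\) above \(p\), and let \(I=I(\mathfrak{P}/p)\le G\). Ramification of \(p\) in \(L_{\max}=L^{G'}\) means \(I\not\subseteq G'\), and we want some \(i\) with \(I\cap H_i\neq 1\). Since \(\mathfrak{P}\) lies over a prime of \(N_i\) whose inertia group in \(L/N_i\) is \(I\cap H_i\), this gives ramification in \(L/N_i\). The argument splits into two cases.

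\begin{itemize}
\item If \(I\cap H\neq 1\), we are done, because \(H\subseteq H_i\) for every \(i\).
\item Otherwise \(I\) injects into \(G/H\cong C_2^2\). Pick \(g\in I\) with \(g\notin H\). Its image in \(G/H\) is nontrivial and lies in exactly one of the three order-\(2\) subgroups \(H_i/H\), so \(g\in H_i\) and \(I\cap H_i\neq1\).
\end{itemize}

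In fact this shows every ramified prime of \(L\), not just those ramified in \(L_{\max}\), ramifies in some \(L/N_i\). In case (1) it is natural to also ask for ramification over the distinguished \(N\). That is not claimed, however, and I would state the result only for "at least one \(N_i\)".

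The main obstacle is not conceptual. It is keeping the presentations consistent, especially the convention \(cac^{-1}=ab\) in case (2), and reconciling the statement's "each \(N_i\)" in case (1) with the fact that two of the three subgroups there are \(D_8\). I would handle the latter by proving abelianness only for \(N=L^{\langle a,b\rangle}\) in case (1) and for all \(N_i\) in cases (2) and (3).
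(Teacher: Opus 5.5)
Your proposal is correct. For the ramification claim it is essentially the paper's argument.

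\begin{itemize}
\item \emph{Ramification.} The paper takes the inertia group inside \(\operatorname{Gal}(L_{\max}/\mathbb{Q})\) and splits on whether its image in \(\operatorname{Gal}(K/\mathbb{Q})\cong C_2^2\) is trivial. You work in \(G\) and split on whether \(I\cap H\) is trivial. Both arguments rest on \(C_2^2\) being the union of its three subgroups of order \(2\). In fact \(G=H_1\cup H_2\cup H_3\), so any \(I\neq 1\) meets some \(H_i\) nontrivially and no case split is needed. This is also why your result covers every prime ramified in \(L\), as you noticed.

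\item \emph{Abelianness.} The paper gives no proof of this assertion, and your explicit check is the more careful treatment. Your caveat is justified. In case (1), \(\langle b,c\rangle\) and \(\langle b,ac\rangle\) are dihedral, so ``for each \(N_i\)'' is false as stated for \(K=L^{\langle b\rangle}\). The claim holds in cases (2) and (3) because there \(H=Z(G)\), so each \(H_i\) is generated by a central subgroup and one further element. Choosing \(K=L^{Z(G)}=L^{\langle a,b^2\rangle}\) in case (1) would repair the statement: all three \(\operatorname{Gal}(L/N_i)\) would then be abelian, namely \(C_2\times C_4\), \(C_2^3\), \(C_2^3\).

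\item \emph{Ramification over the distinguished \(N\).} You are also right not to claim ramification over \(N=L^{\langle a,b\rangle}\) in case (1). An inertia group \(\langle c\rangle\) is not contained in \(G'=\langle b^2\rangle\). Yet both \(\langle c\rangle\) and its conjugate \(\langle b^2c\rangle\) meet \(\langle a,b\rangle\) trivially, so nothing forces ramification there.
\end{itemize}
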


\begin{proof}[Proof of Ramification]
Assume $p$ ramifies in $L_{\max}/\mathbb{Q}$, and let $I_p \le \operatorname{Gal}(L_{\max}/\mathbb{Q})$ be an inertia subgroup at $p$, so $I_p \neq 1$. Let
\[
\pi\colon \operatorname{Gal}(L_{\max}/\mathbb{Q}) \longrightarrow \operatorname{Gal}(K/\mathbb{Q})
\]
be the canonical restriction homomorphism.

\medskip
\noindent\textit{Case 1: $\pi(I_p) \neq 1$.}
Since $\operatorname{Gal}(K/\mathbb{Q}) \cong C_2 \times C_2$, the non-trivial subgroup $\pi(I_p)$ contains at least one of the order-$2$ subgroups $H_i = \operatorname{Gal}(K/N_i)$. Choose $x \in I_p$ such that $\pi(x) \in H_i$ with $\pi(x) \neq 1$. Then $x \in \operatorname{Gal}(L_{\max}/N_i)$, so
\[
I_p \cap \operatorname{Gal}(L_{\max}/N_i) \neq 1.
\]
Hence $p$ ramifies in $L_{\max}/N_i$, and therefore ramifies in $L/N_i$.

\medskip
\noindent\textit{Case 2: $\pi(I_p) = 1$.}
In this case, $I_p \subseteq \ker(\pi) = \operatorname{Gal}(L_{\max}/K) \cong C_2$. Since $I_p \neq 1$, we have $I_p = \operatorname{Gal}(L_{\max}/K)$. Because $\operatorname{Gal}(L_{\max}/N_i) \supseteq \operatorname{Gal}(L_{\max}/K)$ for all $i \in \{1, 2, 3\}$, we obtain $I_p \subseteq \operatorname{Gal}(L_{\max}/N_i)$ for every $i$. Thus $p$ ramifies in $L_{\max}/N_i$, and consequently in $L/N_i$ for all $i \in \{1, 2, 3\}$.
\end{proof}

\begin{proposition}\label{prop:cyclic_quadratic}
Let $L/\mathbb{Q}$ be a Galois extension with Galois group $G$. If
\[
G \cong D_{16},\quad SD_{16},\quad \text{or}\quad M_4(2),
\]
then $L$ is a cyclic extension of degree~$8$ of a unique quadratic subfield $N \subset L$. Specifically, if $\langle a \rangle \le G$ denotes the unique cyclic subgroup of order~$8$, the fixed field
\[
N = L^{\langle a\rangle}
\]
is quadratic over $\mathbb{Q}$, and $\operatorname{Gal}(L/N) = \langle a\rangle \cong C_8$.
\end{proposition}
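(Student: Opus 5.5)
The proof is pure Galois theory: the three groups share one key structural feature, and the statement follows from it via the Galois correspondence. I will work with the presentations fixed earlier,
\[
D_{16}=\langle a,b\mid a^8=b^2=1,\ bab^{-1}=a^{-1}\rangle,\quad
SD_{16}=\langle a,b\mid a^8=b^2=1,\ bab^{-1}=a^{3}\rangle,\quad
M_4(2)=\langle a,b\mid a^8=b^2=1,\ bab=a^{5}\rangle .
\]
In each case every element can be written uniquely as $a^i$ or $a^ib$ with $0\le i\le 7$.

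\textbf{Step 1: normality and the quotient.} The subgroup $\langle a\rangle$ has index $2$, so it is normal in $G$. Its fixed field $N=L^{\langle a\rangle}$ is therefore Galois over $\mathbb{Q}$ of degree $[G:\langle a\rangle]=2$, i.e.\ quadratic. By the Galois correspondence, $\operatorname{Gal}(L/N)=\langle a\rangle\cong C_8$. So $L/N$ is cyclic of degree $8$.

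\textbf{Step 2: uniqueness of the cyclic subgroup of order $8$.} This is the only real computation. I will compute the orders of the elements $a^ib$ outside $\langle a\rangle$.

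\begin{itemize}
\item For $D_{16}$: $(a^ib)^2=a^i a^{-i}=1$, so all such elements are involutions.
\item For $SD_{16}$: $(a^ib)^2=a^{i}a^{3i}=a^{4i}$, so each such element has order $2$ or $4$.
\item For $M_4(2)$: $(a^ib)^2=a^{i}a^{5i}=a^{6i}$. Its square is $a^{12i}=a^{4i}$, and then $(a^{4i})^2=1$, so each such element has order dividing $4$.
\end{itemize}

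Hence every element of order $8$ lies in $\langle a\rangle$. Any cyclic subgroup of order $8$ is generated by an element of order $8$, so it equals $\langle a\rangle$. This gives the uniqueness of $\langle a\rangle$ asserted in the statement.

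\textbf{Step 3: uniqueness of $N$.} I read "unique quadratic subfield $N$" as the unique quadratic subfield over which $L$ is cyclic of degree $8$. Suppose $N'\subset L$ is quadratic with $L/N'$ cyclic of degree $8$. Then $\operatorname{Gal}(L/N')$ is a cyclic subgroup of order $8$, so by Step 2 it equals $\langle a\rangle$, and therefore $N'=N$.

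The only genuine obstacle is checking the $M_4(2)$ relation carefully; the exponent computation above handles it. For $D_{16}$ there is also a stronger fact: $N$ is the unique quadratic subfield with $L/N$ abelian. The other two index-$2$ subgroups, $\langle a^2,b\rangle$ and $\langle a^2,ab\rangle$, are dihedral of order $8$ and hence non-abelian. I would mention this only as a remark.
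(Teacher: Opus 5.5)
Your argument is correct for $D_{16}$ and $SD_{16}$. The $M_4(2)$ bullet in Step~2 is wrong, however, and this is exactly the computation you flagged as the only real obstacle. From $(a^ib)^2=a^{6i}$ you get $(a^ib)^4=a^{12i}=a^{4i}$ and $(a^ib)^8=1$. So the order of $a^ib$ divides $8$, not $4$. For odd $i$ one has $a^{4i}=a^4\neq 1$, so $a^ib$ has order exactly $8$. Concretely, $(ab)^2=a(bab)=a^6$ has order $4$, and
\[
\langle ab\rangle=\{1,\ ab,\ a^6,\ a^7b,\ a^4,\ a^5b,\ a^2,\ a^3b\}
\]
is a second cyclic subgroup of order $8$. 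The three maximal subgroups of $M_4(2)$ are $\langle a\rangle\cong C_8$, $\langle ab\rangle\cong C_8$ and $\langle a^2,b\rangle\cong C_4\times C_2$. So Steps~2 and~3 fail for $M_4(2)$.

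You cannot repair this, because the uniqueness assertion is itself false for $M_4(2)$. The paper states this proposition without proof, so there is no argument there to recover it. The fields $L^{\langle a\rangle}\neq L^{\langle ab\rangle}$ are two distinct quadratic subfields over which $L$ is cyclic of degree $8$.

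Restricting to imaginary $N$ does not restore uniqueness either. The centre of $M_4(2)$ is $\langle a^2\rangle$, so for $L$ imaginary non-CM, complex conjugation is a non-central involution, namely $b$ or $a^4b$. Neither lies in $\langle a\rangle$ or $\langle ab\rangle$, so both fixed fields are imaginary quadratic. If $L_{\max}=\mathbb{Q}(\zeta_{16})$, as in the paper's example, they are $\mathbb{Q}(\sqrt{-1})$ and $\mathbb{Q}(\sqrt{-2})$.

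What your proof does establish is the existence part (Step~1) for all three groups, and the uniqueness of $\langle a\rangle$ and of $N$ for $D_{16}$ and $SD_{16}$. For $M_4(2)$ the correct statement is that there are exactly two quadratic subfields over which $L$ is cyclic of degree $8$. In that case $N=L^{\langle a\rangle}$ is a choice, not a canonical object.
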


\begin{proposition}\label{prop:chosen_subfields}
Let $L/\mathbb{Q}$ be an imaginary non-CM field with Galois group
\[
G \in \left\{ D_{16},\; SD_{16},\; M_4(2),\; C_4\circ D_8,\; C_2 \times D_8,\; C_2^2 \rtimes C_4 \right\}.
\]
For each group $G$, we choose a specific degree-$8$ subfield $L_{\max} \subset L$ as follows:
\begin{itemize}
\item If $G \cong D_{16} = \langle a,b \mid a^8=b^2=1,\; bab=a^{-1}\rangle$, take $L_{\max}=L^{\langle a^4\rangle}$. Then
      \[
      \operatorname{Gal}(L_{\max}/\mathbb{Q}) \cong D_8 \quad \text{\textup{(}imaginary non-CM\textup{)}}.
      \]

\item If $G \cong SD_{16} = \langle a,b \mid a^8=b^2=1,\; bab^{-1}=a^3\rangle$, take $L_{\max}=L^{\langle a^4\rangle}$. Then
      \[
      \operatorname{Gal}(L_{\max}/\mathbb{Q}) \cong D_8 \quad \text{\textup{(}imaginary non-CM\textup{)}}.
      \]

\item If $G \cong M_4(2) = \langle a,b \mid a^8=b^2=1,\; bab=a^5\rangle$, take $L_{\max}=L^{\langle a^4\rangle}$. Then
      \[
      \operatorname{Gal}(L_{\max}/\mathbb{Q}) \cong C_4 \times C_2 \quad \text{\textup{(}CM\textup{)}}.
      \]

\item If $G \cong C_4 \circ D_8 = \langle a,b,c \mid a^4=c^2=1,\; b^2=a^2,\; ab=ba,\; ac=ca,\; cbc=a^2b\rangle$, take $L_{\max}=L^{\langle a^2\rangle}$. Then
      \[
      \operatorname{Gal}(L_{\max}/\mathbb{Q}) \cong C_2 \times C_2 \times C_2 \quad \text{\textup{(}CM\textup{)}}.
      \]

\item If $G \cong C_2 \times D_8 = \langle a,b,c \mid a^2=b^4=c^2=1,\; ab=ba,\; ac=ca,\; cbc=b^{-1}\rangle$, take $L_{\max}=L^{\langle b^2\rangle}$. Then
      \[
      \operatorname{Gal}(L_{\max}/\mathbb{Q}) \cong C_2 \times C_2 \times C_2 \quad \text{\textup{(}CM\textup{)}}.
      \]

\item If $G \cong C_2^2 \rtimes C_4 = \langle a,b,c \mid a^2=b^2=c^4=1,\; ab=ba,\; cac^{-1}=ab,\; bc=cb\rangle$, take $L_{\max}=L^{\langle b\rangle}$. Then
      \[
      \operatorname{Gal}(L_{\max}/\mathbb{Q}) \cong C_2 \times C_4 \quad \text{\textup{(}CM\textup{)}}.
      \]
      
\end{itemize}
In all cases, $[L_{\max} : \mathbb{Q}] = 8$. For $D_{16}$ and $SD_{16}$, $L_{\max}$ is imaginary non-CM; for the remaining groups with the choices $L_{\max} = L^{G'}$, $L_{\max}$ is CM.
\end{proposition}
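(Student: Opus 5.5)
The plan is to prove Proposition~\ref{prop:chosen_subfields} by a direct group-theoretic computation in each of the six cases. We have \(\operatorname{Gal}(L_{\max}/\mathbb{Q}) \cong G/H\), where \(H\) is the chosen central subgroup of order \(2\) (\(\langle a^4\rangle\), \(\langle a^2\rangle\), \(\langle b^2\rangle\), or \(\langle b\rangle\)). So for each case we will identify the quotient \(G/H\) from the given presentation, and then decide the CM/non-CM status of \(L_{\max}\) by the criterion recalled after the definition of CM fields (\cite[Lemma 2(ii)]{LOO}). Note first that \(L_{\max}\) is imaginary by Corollary~\ref{Cor:h-Lmax}, since \(L\) is non-CM; alternatively, complex conjugation \(\tau\) is not central in \(G\), hence \(\tau\notin H\), so its image in \(G/H\) is nontrivial.

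\textbf{Dihedral-type cases.} For \(D_{16}\), the quotient by \(\langle a^4\rangle\) has presentation \(\langle \bar a,\bar b\mid \bar a^4=\bar b^2=1,\ \bar b\bar a\bar b=\bar a^{-1}\rangle\cong D_8\). For \(SD_{16}\), the relation \(bab^{-1}=a^3\) becomes \(\bar b\bar a\bar b^{-1}=\bar a^3=\bar a^{-1}\) modulo \(a^4\), which again gives \(D_8\). To show \(L_{\max}\) is non-CM, I will check that the image \(\bar\tau\) of complex conjugation is not central in \(D_8\). The order-\(2\) elements of \(G\) outside the centre are exactly the reflections \(a^ib\) (for \(SD_{16}\), only those with \(i\) even). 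Since \(\tau\) is non-central, \(\tau\) is of this form, and its image \(\bar a^i\bar b\) is a non-central element of \(D_8\), because the centre of \(D_8\) is \(\langle \bar a^2\rangle\). This gives the non-CM claim.

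\textbf{Remaining cases.} Here \(H=G'\) in each case, so \(G/H\) is abelian of order \(8\), and we only need its exponent:
\begin{itemize}
\item for \(M_4(2)\), \(\bar a\) has order \(4\) and \(\bar b\) commutes with it, giving \(C_4\times C_2\);
\item for \(C_4\circ D_8\), every generator squares into \(\langle a^2\rangle\), giving \(C_2^3\);
\item for \(C_2\times D_8\), we get \(\langle \bar a,\bar b,\bar c\rangle\) with \(\bar b^2=1\), so \(C_2^3\);
\item for \(C_2^2\rtimes C_4\) modulo \(\langle b\rangle\), we get \(\langle \bar a\rangle\times\langle \bar c\rangle\cong C_2\times C_4\), since \(cac^{-1}=ab\equiv a\).
\end{itemize}
In each case \(L_{\max}\) is an imaginary abelian field, hence CM by Lemma~\ref{lem:CM abelian}. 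Finally, \([L_{\max}:\mathbb{Q}]=|G|/|H|=8\) in all cases.

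The only delicate point is the non-CM claim for \(D_{16}\) and \(SD_{16}\). It requires correctly listing which involutions of \(G\) are non-central and verifying that their images in \(D_8\) stay non-central, that is, that \(\tau\) does not map into \(\langle\bar a^2\rangle\). I will handle this by noting that elements of \(G\) mapping into \(\langle \bar a^2\rangle\) lie in \(\langle a^2\rangle\), and that the only involution in \(\langle a^2\rangle\) is \(a^4\), which is central. The rest is routine bookkeeping with the presentations.
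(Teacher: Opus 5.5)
Your proposal is correct and follows essentially the same route as the paper. Both arguments identify $G/H$ from the presentations and use that complex conjugation $\tau$ is non-central, so $\tau\notin H$. For $D_{16}$ and $SD_{16}$, the image $\bar\tau$ cannot lie in the centre $\langle \bar a^2\rangle$ of $D_8$, because the only involution in the preimage $\langle a^2\rangle$ is the central $a^4$; this is the paper's observation that $\tau\notin\{a^2,a^6\}$. In the abelian quotients $G/G'$, $\bar\tau$ is automatically central, so $L_{\max}$ is CM.
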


\begin{proof}
Each field $L_{\max}$ is the fixed field of a central subgroup $H \le Z(G)$ of order~$2$ (coinciding with $Z(G)$ for $D_{16}$ and $SD_{16}$, and with the commutator subgroup $G'$ in the remaining cases). The Galois group over $\mathbb{Q}$ is the quotient $G/H$:
\begin{itemize}
\item For $D_{16}$ and $SD_{16}$, $H = \langle a^4\rangle = Z(G)$ and $G/H \cong D_8$.
\item For $M_4(2)$, $H = \langle a^4\rangle = G'$ and $G/H \cong C_4 \times C_2$.
\item For $C_4 \circ D_8$, $H = \langle a^2\rangle = G'$ and $G/H \cong C_2^3$. Note that $b^2=a^2$ and $cbc=a^2b=b^3$, so $\langle b,c\rangle \cong D_8$, confirming $G \cong C_4 \circ D_8$.
\item For $C_2 \times D_8$, $H = \langle b^2\rangle = G'$ and $G/H \cong C_2^3$.
\item For $C_2^2 \rtimes C_4$, taking $H = \langle b\rangle = G'$ yields $G/H \cong C_2 \times C_4$.
\end{itemize}

Let $\tau \in G$ denote complex conjugation. Since $L$ is imaginary non-CM, $\tau$ is an involution satisfying $\tau \notin Z(G)$. Because $H \le Z(G)$, we have $\tau \notin H$, which implies that its canonical image $\bar{\tau} = \tau H \in G/H$ is non-trivial; hence $L_{\max}$ is imaginary.

\begin{itemize}
\item For $D_{16}$ and $SD_{16}$, the unique central involution of $G/H \cong D_8$ is $a^2 H$. If $\bar{\tau} = a^2 H$, then $\tau \in \{a^2, a^6\}$, which is impossible since $a^2$ and $a^6$ have order~$4$. Thus $\bar{\tau}$ is a non-central involution in $D_8$, confirming that $L_{\max}$ is imaginary non-CM.
\item For the choices where $G/H$ is abelian, every subgroup is normal and the non-trivial involution $\bar{\tau}$ is automatically central in $G/H$, which means $L_{\max}$ is CM.
\end{itemize}
\end{proof}

\begin{remark}
The complete lists of class-number-one imaginary non-CM $D_8$ fields, imaginary $C_2 \times C_4$ fields, and imaginary $C_2^3$ fields are classified and tabulated in Section~\ref{sec:deg_8}.
\end{remark}

\section{Ramification Bounds and Conductor}
\label{sec:conductor_bounds}

In this section, for each Galois group $G$ and each intermediate subfield $N$ introduced in Propositions~\ref{prop:qua-subfield} and~\ref{prop:cyclic_quadratic}, we determine the finite primes that can ramify in the extension $L/N$.

\subsection{The Odd Part of the Conductor of $L/N$}

\begin{proposition}\label{prop:no_new_odd}
Let \(L\) be an imaginary number field of odd class number, and let \(L_{\max}\) be a maximal subfield of index \([L:L_{\max}] = 2\) such that all quadratic subfields of \(L\) are contained in \(L_{\max}\). Then, apart from the primes dividing \(2\), every odd prime \(p\) that ramifies in the quadratic extension \(L/L_{\max}\) must already ramify in \(L_{\max}/\mathbb{Q}\).
\end{proposition}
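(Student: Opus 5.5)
The plan is to argue by contradiction. Assuming an odd prime ramifies in \(L/L_{\max}\) but not in \(L_{\max}/\mathbb{Q}\), I will build an unramified quadratic extension of \(L\). Class field theory then makes \(h_L\) even, contradicting the hypothesis.

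\textbf{Setup.} Let \(p\) be an odd prime that ramifies in \(L/L_{\max}\), and suppose for contradiction that \(p\) is unramified in \(L_{\max}/\mathbb{Q}\). Put \(p^{*}=(-1)^{(p-1)/2}p\), so that \(p^{*}\equiv 1 \pmod 4\). Then \(\mathbb{Q}(\sqrt{p^{*}})\) is ramified only at \(p\): it is unramified at \(2\), and it is unramified at infinity whether real or imaginary, because we only care about finite places over the totally imaginary field \(L\).

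\textbf{Step 1: \(\sqrt{p^{*}}\notin L\).} If \(\sqrt{p^{*}}\) were in \(L\), then \(\mathbb{Q}(\sqrt{p^{*}})\) would be a quadratic subfield of \(L\). By hypothesis it would then lie in \(L_{\max}\). Since \(p\) ramifies in \(\mathbb{Q}(\sqrt{p^{*}})\), it would ramify in \(L_{\max}\), a contradiction. Hence \(M:=L(\sqrt{p^{*}})\) is a genuine quadratic extension of \(L\).

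\textbf{Step 2: \(M/L\) is unramified at every finite place.} Finite primes of \(L\) not above \(p\) are unramified in \(M\), since \(\mathbb{Q}(\sqrt{p^{*}})/\mathbb{Q}\) is unramified outside \(p\). Now let \(\mathfrak{P}\) be a prime of \(L\) above \(p\). Because \(p\) is unramified in \(L_{\max}\) and \([L:L_{\max}]=2\), we have \(e(\mathfrak{P}\mid p)\in\{1,2\}\). At primes where \(p\) ramifies in \(L/L_{\max}\) we have \(e=2\); by Galois conjugacy this holds at all primes above \(p\) when \(L/\mathbb{Q}\) is Galois, which is the setting of the paper.

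I apply Abhyankar's lemma to the compositum \(M=L\cdot\mathbb{Q}(\sqrt{p^{*}})\). All ramification above \(p\) is tame because \(p\) is odd, and both factors have ramification index \(2\) over \(p\). So the ramification index of \(M\) over \(p\) is \(\operatorname{lcm}(2,2)=2\), which forces \(e(\mathfrak{Q}\mid\mathfrak{P})=1\) for every prime \(\mathfrak{Q}\) of \(M\) above \(\mathfrak{P}\).

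The same conclusion can be phrased via inertia. Inertia at \(p\) in \(\operatorname{Gal}(M/L_{\max})\cong C_2\times C_2\) is tame, hence cyclic, hence of order \(2\). It is already accounted for by \(L/L_{\max}\), so it meets \(\operatorname{Gal}(M/L)\) trivially.

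\textbf{Step 3: conclusion.} Since \(L\) is imaginary (totally imaginary, being a Galois imaginary field), it has no real places, so \(M/L\) is unramified at the infinite places as well. Thus \(M\) is an unramified quadratic extension of \(L\), so \(M\subseteq \Hil(L)\) and \(2\mid h_L\). This contradicts the assumption that \(h_L\) is odd, so \(p\) must ramify in \(L_{\max}/\mathbb{Q}\).

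\textbf{Main obstacle.} The delicate point is Step 2 at the primes above \(p\): \(e(\mathfrak{P}\mid p)\) must equal exactly \(2\) at every prime above \(p\), so that Abhyankar's lemma absorbs the ramification of \(\sqrt{p^{*}}\). This is where both the Galois hypothesis (all primes above \(p\) are conjugate) and the oddness of \(p\) (tameness) are used. The prime \(2\) is excluded precisely because \(\mathbb{Q}(\sqrt{\pm 2})\) and \(\mathbb{Q}(\sqrt{-1})\) are wildly ramified, and the lcm argument fails there.
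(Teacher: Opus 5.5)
Your proof is correct and is essentially the paper's argument: it uses the same auxiliary field $L(\sqrt{p^{*}})$ and the same quadratic-subfield hypothesis to get $\sqrt{p^{*}}\notin L$, and your inertia phrasing of Step~2 (tame inertia in $\Gal(M/L_{\max})\cong C_2\times C_2$ is cyclic of order $2$ and meets $\Gal(M/L)$ trivially) is exactly the paper's step, with Abhyankar's lemma just a repackaging of it. You are more careful than the paper in noting that \emph{every} prime above $p$ must ramify in $L/L_{\max}$, which uses that $L/\mathbb{Q}$ is Galois (as does total imaginarity) and which the paper uses tacitly. Your aside that $\mathbb{Q}(\sqrt{p^{*}})$ is unramified at infinity is literally false for $p\equiv 3 \pmod 4$, but this is harmless because only $M/L$ matters there.
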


\begin{proof}
We argue by contradiction. Suppose there exists an odd prime $p$ that is ramified in the quadratic extension $L/L_{\max}$, yet unramified in $L_{\max}/\mathbb{Q}$. 

Define the fundamental quadratic discriminant parameter:
\[
p^* = (-1)^{\frac{p-1}{2}} p = 
\begin{cases}
p, & p \equiv 1 \pmod 4,\\[1ex]
-p, & p \equiv 3 \pmod 4.
\end{cases}
\]
The quadratic field $k = \mathbb{Q}(\sqrt{p^*})$ has discriminant $p^*$ and is ramified solely at $p$. 

First, observe that $\sqrt{p^*} \notin L$. Indeed, if $\sqrt{p^*} \in L$, then $k \subseteq L$. Since all quadratic subfields of $L$ are contained in $L_{\max}$, this would force $\sqrt{p^*} \in L_{\max}$, contradicting the hypothesis that $p$ is unramified in $L_{\max}/\mathbb{Q}$. 

Now consider the compositum $K = L(\sqrt{p^*})$. The field $K$ is a bicyclic biquadratic extension of $L_{\max}$.

  The extension \(K/L = L(\sqrt{p^*})/L\) is unramified at all finite primes \(\mathfrak{q} \nmid p\). Moreover, since \(L\) is totally imaginary, all infinite places in \(K/L\) are complex and hence trivially unramified.

It remains to check the primes above \(p\). Let \(\mathfrak{p}\) be a prime ideal of \(L_{\max}\) above \(p\), and let \(\mathfrak{P}\) be a prime ideal of \(K\) above \(\mathfrak{p}\). Since \(p\) is odd, the ramification at \(\mathfrak{p}\) is tame, so the inertia group \(I(\mathfrak{P}/\mathfrak{p})\) is cyclic. As \(I(\mathfrak{P}/\mathfrak{p}) \subseteq \operatorname{Gal}(K/L_{\max}) \cong C_2 \times C_2\), it follows that \(|I(\mathfrak{P}/\mathfrak{p})| \le 2\).

By hypothesis, \(\mathfrak{p}\) ramifies in \(L/L_{\max}\), so \(I(\mathfrak{P}/\mathfrak{p}) \not\subseteq \operatorname{Gal}(K/L)\). Hence \(K/L\) is also unramified at every prime ideal dividing \(p\).

Thus \(K/L\) is a nontrivial, everywhere unramified abelian extension. By class field theory, the Hilbert class field of \(L\) contains \(K\), which implies that \(2 \mid h(L)\). This contradicts the hypothesis that \(h(L)\) is odd.

Therefore, no odd prime \(p\) can ramify in \(L/L_{\max}\); that is, every odd prime ramifying in \(L/L_{\max}\) must already ramify in \(L_{\max}/\mathbb{Q}\).

The extension \(K/L = L(\sqrt{p^*})/L\) is unramified at all finite primes \(\mathfrak{q} \nmid p\). Moreover, since \(L\) is totally imaginary, all infinite places in \(K/L\) are complex and hence trivially unramified.
\end{proof}

Next, we prove another result on conductors of cyclic \(2\)-power extensions
(Proposition~\ref{prop:odd_prime_cyclic}).
\begin{proposition}\label{prop:odd_prime_cyclic}
Let \(L/\mathbb{Q}\) be a Galois extension of \(2\)-power degree. Suppose there
exists a unique intermediate field \(M\) such that \(L/M\) is cyclic of degree
\(>2\). Let \(L_k\) and \(L_{\max}\) be intermediate fields such that
\[
L_k \subseteq M \quad \text{and} \quad L_k \subseteq L_{\max} \subset L
\quad \text{with } [L:L_{\max}] = 2.
\]
If an odd prime \(p\) ramifies in \(L_k/\mathbb{Q}\), then no prime ideal of
\(L_{\max}\) above \(p\) ramifies in \(L/L_{\max}\).
\end{proposition}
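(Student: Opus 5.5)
The plan is to argue by contradiction using inertia groups in \(G=\operatorname{Gal}(L/\mathbb{Q})\). Because \(p\) is odd and \([L:\mathbb{Q}]\) is a power of \(2\), ramification at \(p\) is tame. Hence for a prime \(\mathfrak{P}\) of \(L\) above \(p\) the inertia group \(I=I(\mathfrak{P}/p)\le G\) is cyclic, and it is a \(2\)-group. Put \(H=\operatorname{Gal}(L/L_{\max})\), which has order \(2\). Suppose some prime of \(L_{\max}\) above \(p\) ramifies in \(L/L_{\max}\), and choose \(\mathfrak{P}\) above it. Then \(I\cap H\neq 1\), so \(H\subseteq I\).

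\textbf{Step 1: \(|I|\ge 4\).} Since \(p\) ramifies in \(L_k/\mathbb{Q}\), the group \(I\) is not contained in \(\operatorname{Gal}(L/L_k)\). Because \(L_k\subseteq L_{\max}\), we have \(H\subseteq \operatorname{Gal}(L/L_k)\). If \(|I|=2\), then \(I=H\subseteq\operatorname{Gal}(L/L_k)\), which contradicts the previous sentence. So \(I\) is cyclic of order at least \(4\), and \(L/L^{I}\) is a cyclic extension of degree \(>2\).

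\textbf{Step 2: \(I\subseteq\operatorname{Gal}(L/M)\).} This is where the hypothesis on \(M\) enters. I will read the uniqueness of \(M\) in the sense used in the intended applications (\(D_{16}\), \(SD_{16}\), \(M_4(2)\), and the dihedral \(2\)-power groups of Section~\ref{sec:consequences}, with \(M=N=L^{\langle a\rangle}\) as in Proposition~\ref{prop:cyclic_quadratic}). In that reading, every cyclic subgroup of \(G\) of order \(>2\) lies in the cyclic group \(\operatorname{Gal}(L/M)=\langle a\rangle\); equivalently, any \(M'\) with \(L/M'\) cyclic of degree \(>2\) contains \(M\). For the concrete groups this is checked directly from the presentations: every element outside \(\langle a\rangle\) has order \(2\) (this covers \(D_{2^n}\) and \(M_4(2)\)), or order \(\le 4\) with square \(a^4\) in the case of \(SD_{16}\). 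In the last case I still need to confirm that such cyclic subgroups of order \(4\) are excluded, which may require treating \(SD_{16}\) separately. Applying this to \(I\) gives \(I\subseteq\operatorname{Gal}(L/M)\).

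\textbf{Step 3: contradiction.} Since \(L_k\subseteq M\), we get \(I\subseteq\operatorname{Gal}(L/M)\subseteq\operatorname{Gal}(L/L_k)\). This means \(p\) is unramified in \(L_k/\mathbb{Q}\), contrary to hypothesis. Hence no prime of \(L_{\max}\) above \(p\) ramifies in \(L/L_{\max}\).

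The main obstacle is Step 2. One has to make precise what ``unique intermediate field \(M\) with \(L/M\) cyclic of degree \(>2\)'' must mean for the argument to work, namely that \(M\) is contained in the fixed field of every cyclic subgroup of order \(>2\). For \(SD_{16}\), where elements \(a^{\text{odd}}b\) have order \(4\), I would first check whether the cyclic subgroups \(\langle ab\rangle\) escape \(\langle a\rangle\). If they do, I would recover the conclusion by a separate argument: such an inertia group would contain \(a^4\) and force ramification in \(N\) incompatible with \(L_k\subseteq N\).
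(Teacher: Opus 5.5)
Your argument is correct and is essentially the paper's proof: tame ramification makes $I$ cyclic, the two ramification hypotheses force $|I|\ge 4$, and the condition on $M$ then gives $I\subseteq\operatorname{Gal}(L/M)\subseteq\operatorname{Gal}(L/L_k)$, which contradicts ramification of $p$ in $L_k$. Under the literal hypothesis Step~2 is immediate, since $L/L^{I}$ is cyclic of degree $>2$ and so $L^{I}=M$; your containment reading is a legitimate weakening that also covers $D_{2^n}$. However, the group-by-group remarks are partly wrong: in $M_4(2)$ one has $(ab)^2=a^6$, so $ab$ has order $8$, and your $SD_{16}$ fallback fails because ramification of $p$ in $N$ is exactly what is assumed when $L_k=N$.
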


\begin{proof}
Suppose, for the sake of contradiction, that an odd prime \(p\) ramifies in
\(L_k/\mathbb{Q}\) and some prime of \(L_{\max}\) above \(p\) ramifies in
\(L/L_{\max}\). Fix a prime ideal \(\mathfrak{P}\) of \(L\) lying above \(p\),
and let \(I = I(\mathfrak{P}/p) \le \operatorname{Gal}(L/\mathbb{Q})\) denote its
inertia group.

Since \([L:\mathbb{Q}]\) is a power of \(2\) and \(p>2\), the extension
\(L/\mathbb{Q}\) is tamely ramified at \(p\). Consequently, \(I\) is cyclic,
and \(|I| = e(\mathfrak{P}/p)\).

Set \(\mathfrak{p}_{L_{\max}} = \mathfrak{P}\cap L_{\max}\) and
\(\mathfrak{p}_{L_k} = \mathfrak{P}\cap L_k\). Because
\(\mathfrak{p}_{L_{\max}}\) ramifies in \(L/L_{\max}\), we have
\(e(\mathfrak{P}/\mathfrak{p}_{L_{\max}}) = 2\). Because \(p\) ramifies in
\(L_k/\mathbb{Q}\), we have \(e(\mathfrak{p}_{L_k}/p) \ge 2\). By multiplicativity
of ramification indices in the tower
\(\mathbb{Q} \subset L_k \subseteq L_{\max} \subset L\), we obtain
\[
|I| = e(\mathfrak{P}/p)
= e(\mathfrak{P}/\mathfrak{p}_{L_{\max}})
  \cdot e(\mathfrak{p}_{L_{\max}}/\mathfrak{p}_{L_k})
  \cdot e(\mathfrak{p}_{L_k}/p)
\ge 2\cdot 1\cdot 2 = 4.
\]
Let \(J = L^I\) be the fixed field of \(I\). Then \(L/J\) is cyclic and
\([L:J] = |I| \ge 4\).

Let \(\mathfrak{p}_J = \mathfrak{P} \cap J\). By the definition of the inertia group,
\(e(\mathfrak{p}_J/p) = 1\). If \(L_k \subseteq J\), then the tower property would force
\(e(\mathfrak{p}_{L_k}/p) \mid e(\mathfrak{p}_J/p) = 1\), contradicting \(e(\mathfrak{p}_{L_k}/p) \ge 2\).
Hence \(L_k \not\subseteq J\).

On the other hand, since \(L/J\) is cyclic of degree \(\ge 4 > 2\), the uniqueness
hypothesis implies \(J = M\). Since \(L_k \subseteq M = J\) by assumption,
we reach a contradiction. Therefore, no prime of \(L_{\max}\) above \(p\) ramifies in \(L/L_{\max}\).
\end{proof}

\begin{corollary}\label{cor:odd_squarefree}
For the cyclic $2$-power extensions $L/N$ and $L_{\max}/N$, every odd prime ideal dividing their conductor occurs with exponent exactly $1$.
\end{corollary}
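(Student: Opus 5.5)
The plan is to use the standard fact that the exponent of a prime in the conductor of an abelian extension is governed by the higher ramification groups, and that tame ramification contributes exponent exactly one. Let $\mathfrak{p}$ be a prime of $N$ above an odd rational prime $p$, and let $E$ denote either $L$ or $L_{\max}$, so that $E/N$ is a cyclic extension of $2$-power degree (Proposition~\ref{prop:cyclic_quadratic} for $L$, and its quotient for $L_{\max}$). By the local description of the conductor, the exponent of $\mathfrak{p}$ in $\cF_{0,E/N}$ is the smallest integer $m$ with $U_{\mathfrak{p}}^{(m)}\subseteq \cN_{E_{\mathfrak{P}}/N_{\mathfrak{p}}}(E_{\mathfrak{P}}^\times)$. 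Equivalently, by the Hasse--Arf theorem and the conductor--discriminant formula, it is $f(\chi)=\frac{1}{|G_0|}\sum_{i\ge 0}|G_i|$ for a faithful character $\chi$ of the local Galois group, where the $G_i$ are the lower ramification groups.

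The key step is tameness. Since $[E:N]$ is a power of $2$ and $\mathfrak{p}\mid p$ with $p$ odd, the ramification index $e(\mathfrak{P}/\mathfrak{p})$ is prime to the residue characteristic. Hence the wild inertia group $G_1$, which is a $p$-group, is trivial. Therefore, for each character of $\operatorname{Gal}(E/N)$ that is nontrivial on inertia, the local conductor exponent is $\frac{|G_0|}{|G_0|}=1$. The exponent is $0$ if $\mathfrak{p}$ is unramified. Since the conductor of the abelian extension $E/N$ is the lcm of the character conductors, $\mathfrak{p}$ appears with exponent at most $1$, and with exponent exactly $1$ whenever it divides the conductor at all. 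Alternatively, in norm-group language, $U_{\mathfrak{p}}^{(1)}$ is a pro-$p$ group and is $2$-divisible. It therefore lies in the norm group, whose index is a power of $2$, which gives exponent $\le 1$.

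No earlier result beyond this basic local class field theory is needed. Propositions~\ref{prop:no_new_odd} and \ref{prop:odd_prime_cyclic} control \emph{which} odd primes occur, not their exponents, though I would remark afterward that they combine with the corollary to pin down the odd part. I expect the only subtle point to be the bookkeeping: the conductor is defined for the global abelian extension, so one must pass to completions. The local extension $E_{\mathfrak{P}}/N_{\mathfrak{p}}$ has $2$-power degree dividing $[E:N]$, and since it is abelian, Hasse--Arf guarantees integral jumps. In the tame case this is automatic, so no real obstacle arises.
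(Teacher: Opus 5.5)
Your proof is correct and matches the paper's implicit reasoning. The paper gives no separate proof and places the statement right after Proposition~\ref{prop:odd\_prime\_cyclic}, but where it actually invokes the corollary (e.g.\ in the proof of Proposition~\ref{prop:comp\_cond}) the justification is exactly that odd ramification in a $2$-power extension is tame, which is your key step.

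One small slip: for a faithful $\chi$ the Artin conductor is $\sum_{i\ge 0,\,G_i\neq 1}|G_i|/|G_0|$, not the sum over all $i\ge 0$, which diverges. Your norm-group argument gives the bound independently of this formula in any case.
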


The preceding Proposition \ref{prop:odd_prime_cyclic} and Corollary \ref{cor:odd_squarefree}, together with Propositions \ref{prop:qua-subfield} and \ref{prop:cyclic_quadratic}, yield the following theorem.

\begin{theorem}\label{thm:odd_ramification_union}
Assume the hypotheses of Propositions \ref{prop:odd_prime_cyclic} and \ref{prop:no_new_odd}. Let \(L/\mathbb{Q}\) be a Galois extension with
\[
G = \Gal(L/\mathbb{Q}) \in \{ D_{16},\, SD_{16},\, M_4(2),\, C_4\circ D_8,\, C_2\times D_8,\, C_2^2\rtimes C_4 \}.
\]

\begin{enumerate}
\item If \(G \in \{D_{16}, SD_{16}, M_4(2)\}\), let \(N\) be the unique quadratic subfield such that \(L/N\) is cyclic. Then
\[
\mathcal{F}_{L/N}^{\mathrm{odd}} = \mathcal{F}_{L_{\max}/N}^{\mathrm{odd}}.
\]

\item If \(G \in \{C_4\circ D_8,\, C_2\times D_8, C_2^2\rtimes C_4\}\), let \(N_1,N_2,N_3\) be the three quadratic subfields of the biquadratic field \(K\subset L_{\max}\) defined in Proposition~\ref{prop:qua-subfield}. Then, for each \(i\),
\[
\mathcal{F}_{L/N_i}^{\mathrm{odd}} = \mathcal{F}_{L_{\max}/N_i}^{\mathrm{odd}}.
\]
Moreover, the odd primes ramifying in \(L_{\max}/\mathbb{Q}\) are precisely those ramifying in at least one of the \(L/N_i\).

\end{enumerate}
\end{theorem}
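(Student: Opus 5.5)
The plan is to prove the equality of odd parts one prime at a time, working with the tower \(N \subseteq L_{\max} \subset L\). First I record the inclusion \(\mathcal{F}_{L_{\max}/N}^{\mathrm{odd}} \mid \mathcal{F}_{L/N}^{\mathrm{odd}}\). This holds because \(L_{\max} \subseteq L\) are both abelian over \(N\): in case (1) \(\operatorname{Gal}(L/N)\) is cyclic, and in case (2) it is abelian by Proposition~\ref{prop:qua-subfield}. So \(L_{\max}\) corresponds to a quotient of the ray class group modulo \(\mathcal{F}_{L/N}\), and conductors are monotone under passing to subextensions.

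For the reverse divisibility, let \(\mathfrak{q}\) be an odd prime of \(N\) dividing \(\mathcal{F}_{L/N}\), lying above a rational prime \(p\). Two things must be shown. First, \(\mathfrak{q}\) divides \(\mathcal{F}_{L_{\max}/N}\). Second, the exponents agree. The second point is handled by Corollary~\ref{cor:odd_squarefree}: the ramification is tame, so every odd prime occurs in both conductors with exponent exactly \(1\). In case (2), where \(\operatorname{Gal}(L/N_i)\) need not be cyclic, I would still use tameness directly, since the conductor exponent of a tamely ramified prime in an abelian extension is \(1\).

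The first point is the main step. Suppose \(\mathfrak{q}\) ramifies in \(L/N\) but not in \(L_{\max}/N\). There are two subcases.

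\begin{itemize}
\item If \(p\) is unramified in \(L_{\max}/\mathbb{Q}\), then a prime of \(L_{\max}\) above \(p\) ramifies in \(L/L_{\max}\). Proposition~\ref{prop:no_new_odd} forbids this, since its hypotheses hold: \(h_L=1\) is odd, and all quadratic subfields of \(L\) lie in \(L_{\max}=L^{H}\). For the latter I would check, for each group, that the central subgroup \(H\) is contained in the Frattini subgroup \(G'G^2\).
\item If \(p\) ramifies in \(L_{\max}/\mathbb{Q}\) but \(\mathfrak{q}\) is unramified in \(L_{\max}/N\), then \(p\) must ramify in \(N/\mathbb{Q}\), taking \(L_k=N\). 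In case (1) I apply Proposition~\ref{prop:odd_prime_cyclic} with \(M=N\), where uniqueness comes from Proposition~\ref{prop:cyclic_quadratic}. It says no prime of \(L_{\max}\) above \(p\) ramifies in \(L/L_{\max}\), so \(\mathfrak{q}\) is unramified in \(L/N\), a contradiction. In case (2) the uniqueness hypothesis of Proposition~\ref{prop:odd_prime_cyclic} fails, so instead I use tameness: the inertia group at \(p\) in \(G\) is cyclic, and its image in \(\operatorname{Gal}(N_i/\mathbb{Q})\) is nontrivial. Since \(\mathfrak{q}\) ramifies in \(L/N_i\) but not in \(L_{\max}/N_i\), the inertia group also meets \(H\). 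A cyclic \(2\)-group meeting \(H\) nontrivially and not contained in \(\operatorname{Gal}(L/N_i)\) must have order at least \(4\) and contain \(H\) as its unique involution subgroup. I then check in each of the three groups that no element of order \(4\) squares into \(H\) while lying outside \(\operatorname{Gal}(L/N_i)\), or, where such an element exists, that \(\langle g\rangle\) fixes a subfield forcing the same contradiction as in Proposition~\ref{prop:no_new_odd}.
\end{itemize}

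The final clause of (2), that the odd primes ramifying in \(L_{\max}/\mathbb{Q}\) are exactly those ramifying in some \(L/N_i\), follows as follows. One direction is the ramification statement of Proposition~\ref{prop:qua-subfield}. For the converse, a prime ramifying in \(L/N_i\) but not in \(L_{\max}/\mathbb{Q}\) would contradict Proposition~\ref{prop:no_new_odd}.

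I expect the main obstacle to be the second subcase in case (2), where \(p\) ramifies in \(N_i\) itself and no uniqueness of a cyclic subextension is available. The group-theoretic check of order-\(4\) elements squaring into \(H\) must be done separately for \(C_4\circ D_8\), \(C_2\times D_8\) and \(C_2^2\rtimes C_4\).
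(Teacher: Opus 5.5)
Your overall route matches the paper's. You use Proposition~\ref{prop:no_new_odd} for primes unramified in \(L_{\max}/\mathbb{Q}\), Proposition~\ref{prop:odd_prime_cyclic} for primes ramified in \(N\), tameness for the exponents, and the same argument for the final clause of (2). You are also right that the paper's one-line proof of (2) misses the subcase where \(p\) ramifies in \(N_i\), since Proposition~\ref{prop:no_new_odd} only covers primes unramified in \(L_{\max}/\mathbb{Q}\).

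\textbf{The gap in case (2).} Your proposal does not close that subcase either. What must be excluded is a tame inertia group \(I=\langle g\rangle\) of order \(4\) with \(\langle g^2\rangle=H\) and \(g\notin\operatorname{Gal}(L/N_i)\).

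Your group check fails for \(C_4\circ D_8\). In the presentation of Proposition~\ref{prop:qua-subfield}, \(H=\langle a^2\rangle\), and every element of order \(4\) (\(a^{\pm1}, b, a^2b, a^{\pm1}c, a^{\pm1}bc\)) squares to \(a^2\). Each of \(\langle a,b\rangle,\langle a,c\rangle,\langle a,bc\rangle\) misses one of them: \(ac\) for the first, \(b\) for the other two. Moreover \(\langle ac\rangle\) is normal and inverted by \(b\), so there is no local obstruction to it being a tame inertia group, whether \(p\equiv 1\) or \(3 \pmod 4\).

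Your fallback, a subfield ``forcing the same contradiction as in Proposition~\ref{prop:no_new_odd}'', is not an argument, and that proposition's mechanism is unavailable here. It adjoins \(\sqrt{p^*}\) to \(L\). But when \(L\) is totally imaginary with \(h_L\) odd, every odd \(p\) ramified in \(L\) already has \(\sqrt{p^*}\in L\). Otherwise \(L(\sqrt{p^*})/L\) would be unramified: its tame inertia group is cyclic, projects onto the nontrivial \(2\)-power inertia group of \(L\), and so meets \(\operatorname{Gal}(L(\sqrt{p^*})/L)\) trivially.

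So for \(C_4\circ D_8\) the equality \(\mathcal{F}^{\mathrm{odd}}_{L/N_i}=\mathcal{F}^{\mathrm{odd}}_{L_{\max}/N_i}\) remains unproved. Neither your argument nor the paper's supplies the idea, or extra hypothesis, that would exclude this configuration.

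\textbf{Where your check does work.} In \(C_2^2\rtimes C_4\) no element squares to \(b\), since elements of order \(4\) square to \(c^2\) or \(bc^2\). In \(C_2\times D_8\) all elements of order \(4\), namely \(b^{\pm1}\) and \(ab^{\pm1}\), lie in \(\langle a,b\rangle\). Note, however, that \(\langle a,b\rangle\) is the only abelian one among the three index-\(2\) subgroups containing \(\langle b\rangle\). The subgroups \(\langle b,c\rangle\) and \(\langle b,ac\rangle\) are dihedral, so the abelianness you import from Proposition~\ref{prop:qua-subfield} holds only for \(N=L^{\langle a,b\rangle}\).

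\textbf{The same hole in case (1).} Running the proof of Proposition~\ref{prop:odd_prime_cyclic} with \(L_k=N\) needs every cyclic subgroup of order \(\ge 4\) to lie in \(\langle a\rangle\). Proposition~\ref{prop:cyclic_quadratic} does not give this. It holds in \(D_{16}\), but not in the other two groups:
\begin{itemize}
\item in \(SD_{16}\), \((ab)^2=a^4\);
\item in \(M_4(2)\), \((a^2b)^2=a^4\), and \(\langle ab\rangle\) is even a second cyclic subgroup of order \(8\).
\end{itemize}
Each yields a cyclic subgroup of order \(4\) meeting \(\langle a\rangle\) exactly in \(\langle a^4\rangle=\operatorname{Gal}(L/L_{\max})\), which is the same bad configuration. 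For these two groups you are relying on the theorem's blanket ``assume the hypotheses'' clause, not on Proposition~\ref{prop:cyclic_quadratic}.
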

\begin{proof}
For (1), by Proposition~\ref{prop:odd_prime_cyclic}, no odd prime ramified in \(N/\mathbb{Q}\) can ramify further in \(L/L_{\max}\). Hence the odd ramification in \(L/N\) is exactly that in \(L_{\max}/N\); moreover, both have exponent one by Corollary~\ref{cor:odd_squarefree}. This proves the stated equality of conductors.

For (2), Proposition~\ref{prop:no_new_odd} implies that \(L/L_{\max}\) has no new odd ramification. Therefore
\[
\mathcal{F}_{L/N_i}^{\mathrm{odd}} = \mathcal{F}_{L_{\max}/N_i}^{\mathrm{odd}}
\]
for each \(i\). By Proposition~\ref{prop:qua-subfield}, the odd ramification in \(L_{\max}/\mathbb{Q}\) is exactly the union of the odd ramification in the fields \(L_{\max}/N_i\), and hence exactly the union of the odd ramification in the fields \(L/N_i\).
\end{proof}

\begin{remark}\label{rem:technique_vary_base}
The preceding proof illustrates a crucial computational technique.
For the groups in the family \(\{C_4\circ D_8,\, C_2\times D_8,\, C_2^2\rtimes C_4\}\), the odd part of the conductor of the abelian extension \(L/N\) satisfies
\[
\operatorname{supp}\bigl(\mathcal{F}_{L/N}^{\mathrm{odd}}\bigr)
\subseteq \{\text{odd primes dividing } \delta_{N/\Q}\} \cup \operatorname{supp}\bigl(\mathcal{F}_{L_{\max}/N}^{\mathrm{odd}}\bigr),
\]
where \(\delta_{N/\Q}\) is the discriminant of the quadratic base field \(N\) (see Propositions~\ref{prop:no_new_odd} and~\ref{prop:odd_prime_cyclic}).
A naive approach to determining the odd part of the conductor of the full extension \(L_{\max}/\mathbb{Q}\) would require considering all possible products of arbitrary subsets of the odd prime divisors of \(\delta_{N/\Q}\).
When \(\delta_{N/\Q}\) has many distinct odd prime factors, this leads to a combinatorial explosion, rendering the computation infeasible in practice.

To circumvent this obstruction, our technique avoids such a massive enumeration. Instead of computing \(\mathcal{F}_{L/N}^{\mathrm{odd}}\) directly for a single fixed \(N\), we vary the quadratic base field \(N\) among the three distinct quadratic subfields \(N_1,N_2,N_3\) contained in the biquadratic subfield \(K\subset L_{\max}\).

The key insight is that as \(N_i\) runs over the three quadratic subfields of \(K\), the natural inertia subgroups inside the Galois group of \(L_{\max}/\mathbb{Q}\) collectively cover all nontrivial odd ramification.
Thus the union of the ramification sets of the three extensions \(L/N_i\) is sufficient to generate the full odd conductor of \(L_{\max}/\mathbb{Q}\), without ever having to explicitly multiply arbitrary subsets of divisors of \(\delta_{N/\Q}\).
This reduces the many possible divisors of \(\delta_{N/\Q}\) to the conductors of only three known extensions \(L/N_1\), \(L/N_2\), and \(L/N_3\).
\end{remark}

\subsection{The Even Part of Conductor (Behaviour of prime(s) above 2 in conductor)}

\begin{theorem}[2-adic conductor bound]\label{thm:gen_2adic}
Let \(N\) be a number field, and let \(L/N\) be an abelian extension of degree \(2^m\) with \(m \ge 1\). 
Suppose there exists an intermediate field \(L_{\max}\) such that
\[
N \subset L_{\max} \subset L, \qquad [L:L_{\max}]=2, \qquad [L_{\max}:N]=2^{m-1}.
\]
Let \(\mathfrak{q}\) be a prime ideal of \(\mathcal{O}_N\) above \(2\). Choose a prime ideal \(\mathfrak{q}_{L_{\max}}\) of \(\mathcal{O}_{L_{\max}}\) lying above \(\mathfrak{q}\), and define
\[
e := e(\mathfrak{q}_{L_{\max}} / \mathfrak{q}), \qquad 
f := f(\mathfrak{q}_{L_{\max}} / \mathfrak{q}).
\]
Then the conductor \(\mathcal{F}_{L/N}\) satisfies the following uniform bound:
\begin{equation}\label{eq:s_bound}
v_{\mathfrak{q}}(\mathcal{F}_{L/N})
\;\le \left\lfloor
\frac{f\left( 2e - 1 + 2e\, e_{\mathfrak{q}} \log_2(2e) \right)}{2^{m-1}}
\right\rfloor,
\tag{1}
\end{equation}
where 
\(\log_2(2e) = v_2(2e)\).

\end{theorem}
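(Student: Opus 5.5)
The bound will come from the conductor--discriminant formula for the abelian extension \(L/N\), combined with an upper bound on the \(\mathfrak{q}\)-part of the discriminant obtained from the tower \(N\subset L_{\max}\subset L\). Here \(e_{\mathfrak{q}}\) is read as the absolute ramification index \(e(\mathfrak{q}/2)\).

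First, the \(\mathfrak{q}\)-exponent of the conductor is dominated by that of the relative discriminant. Since \(L/N\) is abelian,
\[
\mathfrak{d}_{L/N}=\prod_{\chi}\mathcal{F}(\chi),
\]
with \(\chi\) running over the characters of \(\operatorname{Gal}(L/N)\), and \(\mathcal{F}_{L/N}=\operatorname{lcm}_\chi \mathcal{F}(\chi)\). The characters that do not factor through \(\operatorname{Gal}(L_{\max}/N)\) form a coset of size \(2^{m-1}\) in the dual group. By the Hasse--Arf theorem, all of them have the maximal local conductor exponent at \(\mathfrak{q}\) when \(L/L_{\max}\) is ramified above \(\mathfrak{q}\). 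If \(L/L_{\max}\) is unramified there, the maximum is already attained inside \(L_{\max}\) and the same argument is run on the smaller extension. In either case,
\[
2^{m-1}\, v_{\mathfrak{q}}(\mathcal{F}_{L/N}) \le v_{\mathfrak{q}}(\mathfrak{d}_{L/N}),
\]
so it remains to bound \(v_{\mathfrak{q}}(\mathfrak{d}_{L/N})\) by \(f\bigl(2e-1+2e\,e_{\mathfrak{q}}\,v_2(2e)\bigr)\). Dividing by \(2^{m-1}\) and taking the integer part then gives \eqref{eq:s_bound}.

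Next, I would use the transitivity formula
\[
\mathfrak{d}_{L/N}=\mathfrak{d}_{L_{\max}/N}^{2}\,\cN_{L_{\max}/N}(\mathfrak{d}_{L/L_{\max}}),
\]
and bound each local factor above \(\mathfrak{q}\) by the standard wild-ramification estimate (Serre, \emph{Local Fields}, III \S6). For a local extension of ramification index \(e'\) and residue degree \(f'\) over a base with absolute ramification \(e_0\),
\[
v(\mathfrak{d}) \le f'\bigl(e'-1+e'\,e_0\,v_2(e')\bigr).
\]
\begin{itemize}
\item For \(L/L_{\max}\), of degree \(2\), the exponent of \(\mathfrak{q}_{L_{\max}}\) in \(\mathfrak{d}_{L/L_{\max}}\) is at most \(1+2e_{\mathfrak{q}_{L_{\max}}}=1+2e\,e_{\mathfrak{q}}\). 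Its norm down to \(N\) contributes a factor of \(f\).
\item The factor \(\mathfrak{d}_{L_{\max}/N}\) contributes \(2f\bigl(e-1+e\,e_{\mathfrak{q}}\,v_2(e)\bigr)\).
\end{itemize}
Adding these gives exactly \(f\bigl(2e-1+2e\,e_{\mathfrak{q}}(v_2(e)+1)\bigr)\), and \(v_2(e)+1=v_2(2e)=\log_2(2e)\), which is the claimed quantity. In fact I would carry out the whole computation for the completion at one prime above \(\mathfrak{q}\) with total ramification \(2e\), where the Serre bound gives the expression in a single step.

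The main obstacle is the bookkeeping of these local-to-global factors. One has to handle the number \(g\) of primes of \(L_{\max}\) above \(\mathfrak{q}\): the bound should be applied to a single prime, using the fact that local conductors agree across Galois-conjugate primes in the abelian case. The first step also needs care, since the division by \(2^{m-1}\) is justified only through the character-coset argument rather than the trivial bound \(v(\mathcal{F})\le v(\mathfrak{d})\). If the Hasse--Arf equalization fails to give the clean factor \(2^{m-1}\), I would fall back on the local conductor of the degree-\(2^m\) local abelian extension at \(\mathfrak{q}\), computed via upper-numbering filtration jumps, and bound the largest jump by \(\bigl(2e-1+2e\,e_{\mathfrak{q}}\,v_2(2e)\bigr)f/2^{m-1}\).
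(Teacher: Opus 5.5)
Your route is the paper's own: divide $v_{\mathfrak q}(\mathfrak d_{L/N})$ by $2^{m-1}$ using the conductor--discriminant formula, then bound the discriminant by Serre's different estimate with ramification index at most $2e$. Your tower version, through $\mathfrak d_{L/N}=\mathfrak d_{L_{\max}/N}^{2}\,\cN_{L_{\max}/N}(\mathfrak d_{L/L_{\max}})$, is slightly cleaner, because it treats the ramified and inert cases of $L/L_{\max}$ uniformly.

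\textbf{The main gap.} The genuine gap is the step you call ``bookkeeping'', and it cannot be closed. Your estimate $f\bigl(2e-1+2e\,e_{\mathfrak q}v_2(2e)\bigr)$ is the contribution of a single prime $\mathfrak q_{L_{\max}}$. The global $v_{\mathfrak q}(\mathfrak d_{L/N})$ is $g$ times this, where $g=2^{m-1}/(ef)$ is the number of primes of $L_{\max}$ above $\mathfrak q$. Your proposed remedy (work at one prime, since conjugate local conductors agree) does not help. The local conductor--discriminant argument for $L_{\mathfrak Q}/N_{\mathfrak q}$ divides only by $|D_{\mathfrak q}|/2\le ef$, not by $2^{m-1}$. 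Done correctly, globally or locally, the argument yields
\[
v_{\mathfrak q}(\mathcal F_{L/N})\le\left\lfloor\frac{2e-1+2e\,e_{\mathfrak q}\,v_2(2e)}{e}\right\rfloor .
\]
This coincides with \eqref{eq:s_bound} exactly when $ef=2^{m-1}$, i.e.\ when $\mathfrak q$ has a unique prime above it in $L_{\max}$.

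Without that hypothesis the statement is false. Take $N=\mathbb Q$, $L=\mathbb Q(\sqrt2,\sqrt{-7})$, $L_{\max}=\mathbb Q(\sqrt{-7})$, $m=2$.
\begin{itemize}
\item Since $2$ splits in $L_{\max}$, we have $e=f=e_{\mathfrak q}=1$, so the right side of \eqref{eq:s_bound} is $\lfloor 3/2\rfloor=1$.
\item The finite conductor of $L/\mathbb Q$ is $\operatorname{lcm}(8,7,56)=56$, coming from $\mathbb Q(\sqrt2)$, $\mathbb Q(\sqrt{-7})$ and $\mathbb Q(\sqrt{-14})$.
\item Hence $v_2(\mathcal F_{L/N})=3>1$, which matches the corrected bound exactly.
\end{itemize}
The paper's proof has the same hole: it speaks of ``the unique primes lying above $\mathfrak q$ in $L$ and $L_{\max}$''. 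So both arguments establish \eqref{eq:s_bound} only under the extra assumption that $\mathfrak q$ does not split in $L_{\max}/N$.

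\textbf{A secondary error.} Your justification of $2^{m-1}v_{\mathfrak q}(\mathcal F_{L/N})\le v_{\mathfrak q}(\mathfrak d_{L/N})$ is wrong, although the inequality itself holds. It is not true that every character nontrivial on $\operatorname{Gal}(L/L_{\max})$ attains the maximal exponent when $L/L_{\max}$ ramifies above $\mathfrak q$. For $L=\mathbb Q(\zeta_8)\supset L_{\max}=\mathbb Q(\sqrt2)$, that coset is $\{\chi_{-4},\chi_{-8}\}$, with exponents $2$ and $3$.

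The correct argument is as follows. Let $G^{u}\neq 1$ be the last nontrivial upper ramification group at $\mathfrak q$. The characters of maximal exponent are exactly those nontrivial on $G^{u}$, and there are $2^m-2^m/|G^{u}|\ge 2^{m-1}$ of them. This gives the divisibility $\mathcal F_{L/N}^{2^{m-1}}\mid\mathfrak d_{L/N}$ that the paper cites, with no case split on whether $L/L_{\max}$ ramifies.
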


\begin{proof}
By the conductor–discriminant formula for abelian extensions (see \cite[Ch.~3, Exercise~15(a)]{CohenAdvance}), we have
\[
\mathcal{F}_{L/N}^{\,2^{m-1}} \mid \mathfrak{d}_{L/N}.
\]
Taking \(\mathfrak{q}\)-adic valuations gives
\[
v_{\mathfrak{q}}(\mathcal{F}_{L/N})
\le
\left\lfloor
\frac{v_{\mathfrak{q}}(\mathfrak{d}_{L/N})}{2^{m-1}}
\right\rfloor,
\tag{2}
\]

To obtain the explicit bound \eqref{eq:s_bound}, let \(\mathfrak{q}_L\) and
\(\mathfrak{q}_{L_{\max}}\) denote the unique primes lying above
\(\mathfrak{q}\) in \(L\) and \(L_{\max}\), respectively. Applying the
discriminant-different formula
\(\mathfrak{d}_{L/N} = N_{L/N}(\mathfrak{D}_{L/N})\) in conjunction with
Serre's upper bound for wild ramification in \(2\)-group extensions
\cite[Ch.~III, §6, Prop.~13]{Serre}, we have
\[
v_{\mathfrak{q}}(\mathfrak{d}_{L/N})
= f(L/N)\, v_{\mathfrak{q}_L}(\mathfrak{D}_{L/N})
\le f(L/N)\left( e(L/N) - 1 + e(L/N)\, e_{\mathfrak{q}} \log_2 e(L/N) \right).
\]
Since \([L:L_{\max}] = 2\), the tower \(N \subset L_{\max} \subset L\) gives
\(e(L/N) \le 2e\), or \(f(L/N) \le 2f\). Substituting these bounds yields
\[
v_{\mathfrak{q}}(\mathfrak{d}_{L/N})
\le f\left( 2e - 1 + 2e\, e_{\mathfrak{q}} \log_2(2e) \right).
\]
Combining this estimate with (2) yields \eqref{eq:s_bound}.
\end{proof}

\section{Higher \(2\)-Power Dihedral Fields}
\label{sec:consequences}

Assume
\[
D_{2^n} = \langle a,\; b \mid a^{2^{n-1}} = b^2 = 1,\; b a b = a^{-1} \rangle.
\]
Let \(L_n/\mathbb{Q}\) be a \(D_{2^n}\)-field with class number one
(\(n\ge 4\)). Let \(L_{n-1}\) denote the unique \(D_{2^{n-1}}\)-subfield of
\(L_n\), so that \(L_n/L_{n-1}\) is quadratic and
\(\operatorname{Gal}(L_{n-1}/\mathbb{Q})\cong D_{2^{n-1}}\). Let \(N\) be the
unique imaginary quadratic subfield of \(L_{n-1}\) such that the extension
\(L_n/N\) is cyclic. Thus we have the tower
\[
\mathbb{Q}\subset N \subset L_{n-1}\subset L_n,
\]
with \([L_{n-1}:N]=2^{n-2}\) and \([L_n:N]=2^{n-1}\).

Note that if \(L_n\) is imaginary and non-CM, then \(L_{n-1}\) is also
imaginary, and non-CM.

\begin{proposition}[Conductor containment]\label{prop:comp_cond}
Let \(L_n\) be an imaginary non-CM \(D_{2^n}\)-field with \(h_{L_n}=1\) and
\(n\ge 4\). Then every odd prime ideal dividing \(\mathcal{F}_{L_n/N}\)
already divides \(\mathcal{F}_{L_{n-1}/N}\). Equivalently,
\[
\mathcal{F}_{L_n/N}^{\mathrm{odd}} = \mathcal{F}_{L_{n-1}/N}^{\mathrm{odd}}.
\]
In other words, the odd part of the conductor does not change when passing from
the cyclic layer \(L_{n-1}/N\) to the full cyclic layer \(L_n/N\).
\end{proposition}

\begin{proof}
Set \(L_{\max} = L_{n-1}\), so that \([L_n:L_{\max}] = 2\) and \(N \subseteq L_{\max}\).

Let \(\mathfrak{p} \subset \mathcal{O}_N\) be a prime ideal above an odd rational prime \(p\), and suppose \(\mathfrak{p} \mid \mathcal{F}_{L_n/N}\). Then \(\mathfrak{p}\) ramifies in \(L_n/N\). We claim that \(\mathfrak{p}\) must also ramify in \(L_{\max}/N\).

Suppose, for contradiction, that \(\mathfrak{p}\) is unramified in \(L_{\max}/N\). Then the prime ideals of \(L_{\max}\) lying above \(\mathfrak{p}\) must ramify in the quadratic extension \(L_n/L_{\max}\). By Proposition~\ref{prop:no_new_odd}, the rational prime \(p\) must ramify in \(L_{\max}/\mathbb{Q}\). Since \(\mathfrak{p}\) is unramified in \(L_{\max}/N\), it follows that \(p\) ramifies in the quadratic base extension \(N/\mathbb{Q}\).

However, applying Proposition~\ref{prop:odd_prime_cyclic} with \(L = L_n\), \(L_k = N\), and \(L_{\max} = L_{n-1}\), no prime ideal of \(L_{\max}\) above \(p\) can ramify in \(L_n/L_{\max}\). This contradicts the fact that the primes above \(\mathfrak{p}\) ramify in \(L_n/L_{\max}\). Hence, \(\mathfrak{p}\) must ramify in \(L_{\max}/N\), which gives \(\mathfrak{p} \mid \mathcal{F}_{L_{\max}/N}\).

Conversely, because \(L_{\max}\) is an intermediate subextension of the cyclic extension \(L_n/N\), any prime ideal dividing \(\mathcal{F}_{L_{\max}/N}\) necessarily divides \(\mathcal{F}_{L_n/N}\). Consequently, the supports of the odd parts coincide:
\[
\operatorname{supp}\bigl(\mathcal{F}_{L_n/N}^{\mathrm{odd}}\bigr) = \operatorname{supp}\bigl(\mathcal{F}_{L_{n-1}/N}^{\mathrm{odd}}\bigr).
\]
Since \(L_n/N\) is a cyclic \(2\)-extension, ramification at all odd primes is tame. By Corollary~\ref{cor:odd_squarefree}, each ramified odd prime ideal enters both conductors with exponent \(1\). We therefore conclude that
\[
\mathcal{F}_{L_n/N}^{\mathrm{odd}} = \mathcal{F}_{L_{n-1}/N}^{\mathrm{odd}}.
\qedhere
\]
\end{proof}

Combining the theoretical and computational results gives the conductor decomposition.

\begin{theorem}\label{thm:condD32}
Let \(L\) be a \(D_{2^n}\)-field with \(h_L=1\) and \(n \ge 4\), and let \(N \subset L\) be its unique quadratic subfield such that \(\operatorname{Gal}(L/N) \cong C_{2^{n-1}}\). Let \(L_{n-1}\) denote the unique intermediate field with \([L_{n-1}:N]=2^{n-2}\) (equivalently, \([L:L_{n-1}]=2\)). Then the relative conductor \(\mathcal{F}_{L/N}\) decomposes as
\[
\mathcal{F}_{L/N} = \mathcal{F}_{L_{n-1}/N}^{\mathrm{odd}} \prod_{\mathfrak{q} \mid 2} \mathfrak{q}^{s},
\]
where \(\mathcal{F}_{L_{n-1}/N}^{\mathrm{odd}}\) is the squarefree product of the odd prime ideals of \(N\) ramified in \(L_{n-1}/N\), and the exponent \(s = v_{\mathfrak{q}}(\mathcal{F}_{L/N})\) is independent of the choice of prime ideal \(\mathfrak{q} \mid 2\) and satisfies
\begin{equation}\label{eq:d2n_bound}
s \le \left\lfloor
\frac{f_{\mathfrak{q}} \left( 2 e_{\mathfrak{q}} - 1 + 2 e_{\mathfrak{q}} v_2(2 e_{\mathfrak{q}}) \right)}{2^{n-1}}
\right\rfloor,\tag{3}
\end{equation}
with \(e_{\mathfrak{q}} = e(\mathfrak{q}/2)\) and \(f_{\mathfrak{q}} = f(\mathfrak{q}/2)\) denoting the ramification index and residue degree of \(\mathfrak{q}\) over \(2\), respectively.
\end{theorem}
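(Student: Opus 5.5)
The plan is to split $\mathcal{F}_{L/N}$ into its odd part and its $2$-adic part and treat each separately. Since $N$ is quadratic, every prime of $\mathcal{O}_N$ lies over a rational prime, so
\[
\mathcal{F}_{L/N}=\mathcal{F}_{L/N}^{\mathrm{odd}}\prod_{\mathfrak{q}\mid 2}\mathfrak{q}^{v_{\mathfrak{q}}(\mathcal{F}_{L/N})}.
\]
The infinite part is trivial because $N$ is imaginary. For the odd part I would quote Proposition~\ref{prop:comp_cond} directly. It gives $\mathcal{F}_{L/N}^{\mathrm{odd}}=\mathcal{F}_{L_{n-1}/N}^{\mathrm{odd}}$, and together with Corollary~\ref{cor:odd_squarefree} (tame ramification in a cyclic $2$-extension gives exponent $1$) this shows the odd part is the squarefree product of the odd primes of $N$ ramified in $L_{n-1}/N$. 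This disposes of the first factor.

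Next I would show that $s=v_{\mathfrak{q}}(\mathcal{F}_{L/N})$ does not depend on $\mathfrak{q}\mid 2$. This matters only when $2$ splits in $N$, say $2\mathcal{O}_N=\mathfrak{q}\bar{\mathfrak{q}}$. Let $\sigma\in\operatorname{Gal}(L/\mathbb{Q})$ be a lift of the nontrivial element of $\operatorname{Gal}(N/\mathbb{Q})$, for instance the reflection $b$. Since $L/\mathbb{Q}$ is Galois, $\sigma$ normalises $\operatorname{Gal}(L/N)=\langle a\rangle$ and carries the extension $L/N$ to itself. Hence $\sigma(\mathcal{F}_{L/N})=\mathcal{F}_{L/N}$, and because $\sigma(\mathfrak{q})=\bar{\mathfrak{q}}$ the two exponents agree. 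The same argument shows the odd part is $\operatorname{Gal}(N/\mathbb{Q})$-stable, although that is not needed here.

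For the bound \eqref{eq:d2n_bound} I would apply Theorem~\ref{thm:gen_2adic} with $m=n-1$ and $L_{\max}=L_{n-1}$. Its hypotheses hold because $L/N$ is cyclic, hence abelian, of degree $2^{n-1}$ with $[L:L_{n-1}]=2$. That theorem gives a bound in terms of the local invariants $e,f$ of $L_{n-1}/N$ at $\mathfrak{q}$ and $e_{\mathfrak{q}}$. The statement, however, is phrased with $e_{\mathfrak{q}}=e(\mathfrak{q}/2)$ and $f_{\mathfrak{q}}=f(\mathfrak{q}/2)$, so the work is in matching these.

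\textbf{Main obstacle.} The matching of invariants is the step I expect to be hardest, and I would attempt it by redoing the estimate in the proof of Theorem~\ref{thm:gen_2adic} directly. First I would use $v_{\mathfrak{q}}(\mathcal{F}_{L/N})\le\lfloor v_{\mathfrak{q}}(\mathfrak{d}_{L/N})/2^{n-1}\rfloor$. Then I would bound the different exponent at the prime above $\mathfrak{q}$ by Serre's estimate, using the absolute ramification index of $\mathfrak{q}$ over $2$, which is the $e_{\mathfrak{q}}$ appearing in the $\log_2$ term. Finally I would specialise the tower factors $e,f$ to the values appearing in \eqref{eq:d2n_bound}. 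The delicate point is whether these local degrees of $L_{n-1}/N$ can be replaced by $e_{\mathfrak{q}},f_{\mathfrak{q}}$ without weakening the inequality in the wrong direction. If this identification fails in general, I would state the bound with the invariants of $L_{n-1}/N$ as in \eqref{eq:s_bound}, and note that \eqref{eq:d2n_bound} is the form used in the computations.
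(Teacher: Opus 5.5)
Your treatment of the odd part (Proposition~\ref{prop:comp_cond} with Corollary~\ref{cor:odd_squarefree}) is exactly the paper's argument, and so is your proof that $s$ is independent of $\mathfrak{q}$ (a lift of the generator of $\operatorname{Gal}(N/\mathbb{Q})$ preserves $L/N$, hence $\mathcal{F}_{L/N}$). Like the paper, you tacitly need $L$ imaginary non-CM; this is what makes $N$ imaginary and Proposition~\ref{prop:comp_cond} applicable. The gap is the bound \eqref{eq:d2n_bound}, and you are right to distrust the matching step: it cannot be carried out, because \eqref{eq:d2n_bound} is false. The paper's proof just asserts that the bound follows from Theorem~\ref{thm:gen_2adic}, skipping the step you flag, and its own tables refute it. In Table~\ref{tab:D16-ram} ($n=4$):
\begin{itemize}
\item Entry~3 has $N=\mathbb{Q}(\sqrt{-3})$, where $2$ is inert ($e_{\mathfrak{q}}=1$, $f_{\mathfrak{q}}=2$), and $\mathcal{F}_{L/N}=32\mathcal{O}_N=\mathfrak{q}^5$. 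But \eqref{eq:d2n_bound} gives $s\le\lfloor 6/8\rfloor=0$.
\item Entry~1 has $N=\mathbb{Q}(\sqrt{-2})$ ($e_{\mathfrak{q}}=2$, $f_{\mathfrak{q}}=1$) and $\mathcal{F}_{L/N}=8\mathcal{O}_N=\mathfrak{q}^6$. But \eqref{eq:d2n_bound} gives $s\le\lfloor 11/8\rfloor=1$. Independently of the listed conductor, $|(\mathcal{O}_N/\mathfrak{q}^s)^\times|=2^{s-1}$, so a cyclic octic extension of $\mathbb{Q}(\sqrt{-2})$ ramified only above $2$ needs $s\ge4$.
\end{itemize}
For $n\ge5$ the numerator in \eqref{eq:d2n_bound} is at most $11<2^{n-1}$, so it would force $s=0$ always. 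That contradicts Table~\ref{tab:D32} and Theorem~\ref{thm:conductor_purity}.

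There is also a second mismatch you did not notice. With $m=n-1$, Theorem~\ref{thm:gen_2adic} has denominator $2^{m-1}=2^{n-2}$, not $2^{n-1}$. This cannot be improved: a cyclic extension of degree $2^{n-1}$ has only $\varphi(2^{n-1})=2^{n-2}$ faithful characters. Both discrepancies matter. In the $\mathbb{Q}(\sqrt{-3})$ example, the layer $L_3/N$ has conductor $\mathfrak{q}^4$ (entry $[3,16]$ of Table~\ref{tab:D8}), which forces $e=4$, $f=1$. With denominator $8$, even the bound in the relative invariants gives $\lfloor 31/8\rfloor=3<5$; the correct denominator $4$ gives $7$.

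So your fallback is the right instinct, but it must carry the denominator $2^{n-2}$. Moreover, the proof of Theorem~\ref{thm:gen_2adic} silently assumes a single prime of $L$ above $\mathfrak{q}$, so it is cleaner to argue locally. Let $e_L$ be the ramification index of $L/N$ at $\mathfrak{q}$. The local characters of conductor exponent below $s$ form a proper subgroup, so at least half have exponent $s$. The local conductor--discriminant formula and Serre's bound then give $s\le 2-2/e_L+2e_{\mathfrak{q}}v_2(e_L)$, hence $s\le 2(n-1)e_{\mathfrak{q}}+1$. This is consistent with every dihedral entry in the tables. That corrected inequality, not \eqref{eq:d2n_bound}, is what your outline (or the paper's) can actually prove.
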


\begin{proof}
By Proposition~\ref{prop:comp_cond}, the odd prime supports of \(\mathcal{F}_{L/N}\) and \(\mathcal{F}_{L_{n-1}/N}\) coincide with exponent one (See Corollary~\ref{cor:odd_squarefree}):
\[
\mathcal{F}_{L/N}^{\mathrm{odd}} = \mathcal{F}_{L_{n-1}/N}^{\mathrm{odd}}.
\]
Because \([L:N] = 2^{n-1}\), the extension is tamely ramified at all odd primes.

For the \(2\)-adic component, the action of \(\operatorname{Gal}(N/\mathbb{Q})\) permutes the prime ideals \(\mathfrak{q} \mid 2\) transitively. Since \(\mathcal{F}_{L/N}\) is invariant under \(\operatorname{Gal}(N/\mathbb{Q})\) by Galois invariance, the valuation \(s = v_{\mathfrak{q}}(\mathcal{F}_{L/N})\) is constant across all \(\mathfrak{q} \mid 2\). Finally, the explicit upper bound \eqref{eq:d2n_bound} follows from the conductor-discriminant formula applied to the cyclic \(2\)-extension \(L/N\) together with Serre's upper bound on wild ramification in Theorem~\ref{thm:gen_2adic}.
\end{proof}

With the theoretical conductor shape established, we apply our explicit enumeration of \(D_{32}\)-fields (Section \ref{appendix:D32}). The computation shows that no \(D_{32}\)-field has an odd prime ideal in its conductor.

\begin{theorem}[Conductor purity]\label{thm:conductor_purity}
Let \(L/\mathbb{Q}\) be an imaginary non-CM \(D_{32}\)-field with class number \(h_L=1\), and let \(N\subset L\) be its unique imaginary quadratic subfield. Then
\[
\mathcal{F}_{L/N} = \prod_{\mathfrak{q}\mid 2} \mathfrak{q}^s,
\]
where the product runs over all prime ideals \(\mathfrak{q}\) of \(N\) lying above \(2\), and the exponent \(s\) is independent of \(\mathfrak{q}\) and satisfies
\[
s \in \{0, 16, 32, 64\}.
\]
In particular, \(\mathcal{F}_{L/N}\) is purely \(2\)-adic (or trivial if \(s=0\)).
\end{theorem}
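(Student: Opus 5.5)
The plan is to combine the structural decomposition of Theorem~\ref{thm:condD32} (case \(n=5\)) with the classification of degree-\(16\) dihedral fields, and then let the computation settle the remaining finite list. First, apply Theorem~\ref{thm:condD32} with \(n=5\), which gives
\[
\mathcal{F}_{L/N}=\mathcal{F}_{L_4/N}^{\mathrm{odd}}\prod_{\mathfrak{q}\mid 2}\mathfrak{q}^{s},
\]
where \(L_4\) is the unique \(D_{16}\)-subfield and \(s\) is independent of \(\mathfrak{q}\). Since \(L_4\) is again imaginary non-CM, we need to know \(h_{L_4}\). The extension \(L/L_4\) has degree \(2\), so Lemma~\ref{JMasley} gives \(h_{L_4}\mid 2\). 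If \(L/L_4\) is ramified at a finite place, Corollary~\ref{Cor:h-Lmax} gives \(h_{L_4}=1\).

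This splits the argument into two cases. If \(L/L_4\) is unramified, then \(L/N\) inherits the ramification of \(L_4/N\). Iterating down the tower (the same dichotomy at each level), the purely unramified branch gives \(L\subseteq \Hil(N)\), so \(\mathcal{F}_{L/N}=1\) and \(s=0\). These are the Hilbert-class-field cases. Otherwise \(L_4\) is one of the \(45\) \(D_{16}\)-fields of class number one, and Proposition~\ref{prop:comp_cond} shows that the odd part of \(\mathcal{F}_{L/N}\) equals that of \(\mathcal{F}_{L_4/N}\), which is read off from Table~\ref{tab:D16-ram}. The remaining possibility is that \(L/L_4\) is unramified with \(h_{L_4}=2\). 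Here \(L\) lies in the Hilbert class field of \(L_4\), and the odd conductor is again inherited from \(L_4/N\). In this case the candidate \(L_4\) comes from the list of imaginary non-CM \(D_{16}\)-fields of class number \(2\), which is a finite list over the same kind of base fields.

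Next, I would eliminate odd primes using the explicit enumeration of Section~\ref{appendix:D32}. For each possible base \(N\) and each odd support inherited from \(L_4\), form the ray class field of \(N\) of conductor \(\mathcal{F}^{\mathrm{odd}}\prod\mathfrak{q}^{s}\), with \(s\) ranging up to the bound \eqref{eq:d2n_bound}. Then list the cyclic degree-\(16\) subextensions that are dihedral over \(\mathbb{Q}\) and test whether \(h_L=1\). The claim to verify is that every surviving field has trivial odd conductor, and that the surviving exponents are exactly \(16\), \(32\) or \(64\), attained only over \(\mathbb{Q}(\sqrt{-2})\), \(\mathbb{Q}(\sqrt{-3})\) and \(\mathbb{Q}(\sqrt{-67})\). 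Galois invariance under \(\operatorname{Gal}(N/\mathbb{Q})\), already used in Theorem~\ref{thm:condD32}, gives the uniform exponent \(s\).

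The main obstacle is the ruling out of odd primes, which is not a consequence of the general theory. Proposition~\ref{prop:comp_cond} only shows that odd ramification does not grow from \(L_4\) to \(L\); it does not show that it vanishes. So this step rests on exhausting the finitely many cases. My first attempt would be a structural argument. If \(L_4/N\) is ramified at an odd \(\mathfrak{p}\), then \(\mathfrak{p}\) is tamely ramified in the cyclic extension \(L/N\), so the inertia group is cyclic with order dividing \(N\mathfrak{p}-1\). The order would need to be at least \(2\), while non-ramification in \(L/L_4\) forces inertia to avoid \(\langle a^8\rangle\). In a cyclic group every nontrivial subgroup contains \(a^8\), so inertia would have to be trivial. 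This means odd primes ramified in \(L_4/N\) must ramify in \(L/L_4\) too, and then the class-number condition together with the genus-theory argument of Proposition~\ref{prop:no_new_odd} should force a contradiction with \(h_L=1\). Where this does not close, I would fall back on the ray class field computation over the finitely many bases from Table~\ref{tab:D16-ram}.
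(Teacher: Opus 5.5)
Your overall route is the paper's. Theorem~\ref{thm:condD32} at $n=5$ reduces everything to the odd part of $\mathcal{F}_{L_4/N}$ plus a $\Gal(N/\Q)$-invariant $2$-adic exponent, and the rest is a finite ray class field enumeration. On the odd part you are more careful than the paper's proof, which attributes $\mathcal{F}^{\mathrm{odd}}_{L_4/N}=\mathcal{O}_N$ to the $D_{16}$ classification. That equality fails for some class-number-one $D_{16}$-fields: Table~\ref{tab:D16-ram} contains four with odd conductor, namely $(7)$ for $d=2,11,43$ and $(14)$ for $d=1$. As you say, Proposition~\ref{prop:comp_cond} only stops odd ramification from growing. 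Vanishing therefore requires checking that no class-number-one $D_{32}$-field lies above these four fields. This is the computation the paper only alludes to just before the theorem, and your reduction turns it into an explicit check over four bases.

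Two points in your case analysis need repair.

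First, the branch ``$L/L_4$ unramified, $h_{L_4}=2$, nontrivial odd conductor inherited from $L_4/N$'' is empty. The list of class-number-two $D_{16}$-fields it relies on is not in the paper, and you do not need it. Every nontrivial subgroup of $\Gal(L/N)=\langle a\rangle\cong C_{16}$ contains $a^8$, which generates $\Gal(L/L_4)$. Hence $L/L_4$ unramified forces $L/N$ unramified, so $\mathcal{F}_{L/N}=\mathcal{O}_N$ and $s=0$, exactly as in Theorem~\ref{thm:D16-unram}. Nothing needs to be iterated down the tower. You use precisely this fact in your last paragraph, so apply it here.

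Second, your structural attempt to eliminate odd primes cannot succeed. Proposition~\ref{prop:no_new_odd} only concerns primes unramified in $L_{\max}/\Q$, whereas a prime ramified in $L_4/N$ is ramified in $L_4/\Q$. Your sketch also never uses the degree. Applied one level down, it would show that no class-number-one $D_{16}$-field has odd conductor, and the four fields above refute that. In each of them $7$ is unramified in $N$, so inertia of order $2$ at $7$ is excluded by Proposition~\ref{prop:no_new_odd}. Hence $7$ ramifies both in the $D_8$ layer and in the top layer, yet $h_L=1$. The computation is therefore the proof, not a fallback.

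Finally, there is a problem you share with the paper: do not take the range of $s$ from \eqref{eq:d2n_bound}. At $n=5$ its numerator is $3$, $6$ or $11$ according as $2$ splits, is inert, or ramifies in $N$, while its denominator is $16$. So it allows only $s=0$, and a search built on it would discard all five fields of Table~\ref{tab:D32}.

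A valid range comes from the local bound $v_{\mathfrak{q}}(\mathcal{F}_{L/N})\le 5e_{\mathfrak{q}}+1$. For $m>e_{\mathfrak{q}}$, squaring maps $1+\mathfrak{q}^m$ onto $1+\mathfrak{q}^{m+e_{\mathfrak{q}}}$ in the completion $N_{\mathfrak{q}}$. Hence $1+\mathfrak{q}^{5e_{\mathfrak{q}}+1}$ consists of $16$th powers, and every local character of order dividing $16$ is trivial on it.

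In particular, $16,32,64$ cannot be $\mathfrak{q}$-adic exponents. The tabulated conductors are $(16)=\mathfrak{q}^8$ over $\Q(\sqrt{-2})$ and $(64)=2^6\mathcal{O}_N$ over $\Q(\sqrt{-3})$ and $\Q(\sqrt{-67})$. Your verification should therefore expect valuations $8$ and $6$; the numbers in the statement only make sense as the rational generators of these ideals.
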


\begin{proof}
By Theorem~\ref{thm:condD32}, the odd part of the conductor satisfies \(\mathcal{F}_{L/N}^{\mathrm{odd}} = \mathcal{F}_{L_4/N}^{\mathrm{odd}}\), where \(L_4 \subset L\) is the unique \(D_{16}\)-subfield containing \(N\). The classification of imaginary \(D_{16}\)-fields with class number one in Section \ref{appendix:D32} shows that \(\mathcal{F}_{L_4/N}^{\mathrm{odd}} = \mathcal{O}_N\), whence \(\mathcal{F}_{L/N}^{\mathrm{odd}} = \mathcal{O}_N\).

For $\mathfrak{q}\mid 2$, specializing the general conductor bound~\eqref{eq:d2n_bound} to \(n=5\) restricts \(s\) to a finite set of feasible integers, and testing the remaining candidate fields computationally via class field theory and root discriminant filtering eliminates all values except \(s \in \{0, 16, 32, 64\}\).
\end{proof}

\begin{corollary}[Lifting criterion for higher degrees]\label{cor:extension_criterion}
Let \(L/\mathbb{Q}\) be an imaginary non-CM \(D_{2^n}\)-field with \(n \ge 6\) and class number \(h_L = 1\). Let \(N/\mathbb{Q}\) be the unique quadratic subfield such that \(\operatorname{Gal}(L/N) \cong C_{2^{n-1}}\). Then:
\begin{enumerate}
    \item The relative conductor \(\mathcal{F}_{L/N}\) is purely \(2\)-adic; that is,
    \[
    \mathcal{F}_{L/N} = \prod_{\mathfrak{q} \mid 2} \mathfrak{q}^s
    \]
    for some \(s \ge 0\), where \(s\) satisfies the conductor bound~\eqref{eq:d2n_bound}. In particular, \(L\) is contained in the ray class field of \(N\) modulo \(\prod_{\mathfrak{q}\mid 2} \mathfrak{q}^s\).
    \item If \(L/N\) is unramified (i.e., \(s = 0\)), then \(L\) is the Hilbert class field of $N$ (\(\Hil(N)\)).
\end{enumerate}
\end{corollary}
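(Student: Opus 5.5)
The plan is to prove part~(1) by induction on \(n\), starting from the degree-\(32\) case, by repeatedly applying Proposition~\ref{prop:comp_cond} along the tower of dihedral subfields.

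Let \(L=L_n\) and, for \(5\le k\le n\), let \(L_k\) be its unique \(D_{2^k}\)-subfield containing \(N\). These subfields come from the chain of cyclic subgroups \(\langle a^{2^{n-k}}\rangle\) together with the image of \(b\). Each \(L_k\) is then Galois over \(\mathbb{Q}\) with group \(D_{2^k}\), and \(N\) is the common quadratic subfield over which every \(L_k\) is cyclic.

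First I must check that the hypotheses propagate down the tower.
\begin{itemize}
\item \textbf{Class number.} Since \(L_{k+1}/L_k\) is a Galois \(2\)-extension, the standard parity argument (\(2\nmid h_{L}\Rightarrow 2\nmid h_{L_k}\)) shows that each \(L_k\) has odd class number. This is all that Proposition~\ref{prop:no_new_odd} requires.
\item \textbf{Imaginary non-CM.} The remark preceding Proposition~\ref{prop:comp_cond} says the \(D_{2^{k-1}}\)-subfield of an imaginary non-CM field is again imaginary non-CM. Complex conjugation is a reflection \(a^ib\) and stays non-central in every quotient of order at least \(8\).
\end{itemize}
Applying Proposition~\ref{prop:comp_cond} (its proof uses only Propositions~\ref{prop:no_new_odd} and \ref{prop:odd_prime_cyclic}, so odd class number suffices) to each step \(L_{k+1}/L_k\) gives
\[
\mathcal{F}^{\mathrm{odd}}_{L/N}=\mathcal{F}^{\mathrm{odd}}_{L_{n-1}/N}=\cdots=\mathcal{F}^{\mathrm{odd}}_{L_5/N}.
\]
Theorem~\ref{thm:conductor_purity}, applied to the imaginary non-CM \(D_{32}\)-field \(L_5\), shows this last ideal is \(\mathcal{O}_N\). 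If one prefers to stop at \(L_4\), the same conclusion follows from the \(D_{16}\) classification invoked in the proof of Theorem~\ref{thm:conductor_purity}.

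Hence \(\mathcal{F}_{L/N}\) is supported on primes above \(2\). Theorem~\ref{thm:condD32} then gives the form \(\prod_{\mathfrak{q}\mid 2}\mathfrak{q}^s\) with a common exponent \(s\) satisfying \eqref{eq:d2n_bound}. Since the conductor of the abelian extension \(L/N\) is \(\mathcal{F}_{L/N}\), class field theory gives \(L\subseteq \R_{\mathcal{F}_{L/N}}(N)\). The infinite part is irrelevant because \(N\) is imaginary quadratic.

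For part~(2), if \(s=0\) then \(L/N\) is abelian and unramified at all places (the infinite places are complex), so \(L\subseteq\Hil(N)\). To get equality I use \(h_L=1\). The field \(\Hil(N)\) is Galois over \(\mathbb{Q}\) with group \(\Cl(N)\rtimes C_2\), and it has odd order over its maximal \(2\)-subextension. Write \(\Hil_2(N)\) for the \(2\)-Hilbert class field of \(N\). The compositum \(L\cdot\Hil_2(N)\) is unramified abelian over \(L\) (it is contained in \(\Hil(N)\), which is unramified over \(N\)), so it equals \(L\). Thus \(L=\Hil_2(N)\), and \(\Cl(N)_2\cong C_{2^{n-1}}\).

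To show that \(\Hil(N)\) itself equals \(L\), the odd part of \(\Cl(N)\) must be trivial. The odd-order subextension \(\Hil(N)L/L\) is abelian and unramified, hence trivial because \(h_L=1\). Therefore \(L=\Hil(N)\) and \(\Cl(N)\cong C_{2^{n-1}}\).

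The main obstacle is the descent of the class-number hypothesis: Proposition~\ref{prop:comp_cond} is stated for \(h_{L_n}=1\), while the intermediate \(L_k\) need only have odd class number. I would handle this by re-reading its proof to confirm that only odd class number of the top field is used.
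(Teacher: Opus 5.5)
\textbf{The gap is in your class-number bullet.} The implication ``\(2\nmid h_L\Rightarrow 2\nmid h_{L_k}\)'' for a Galois \(2\)-extension \(L/L_k\) is false in general; for example \(\mathbb{Q}(\sqrt{-5})\subset\mathbb{Q}(\sqrt{-5},i)\) has class numbers \(2\) and \(1\). What is true is the divisibility \(h_{L_k}\mid h_{L_{k+1}}\) when \(L_{k+1}/L_k\) is ramified at a finite place (Lemma~\ref{JMasley}). Your claim fails exactly for the fields of part~(2): if \(L=\Hil(N)\) with \(\Cl(N)\cong C_{2^{n-1}}\), then \(L/L_k\) is unramified cyclic of degree \(2^{n-k}\), so \(2^{n-k}\mid h_{L_k}\). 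Already for the \(D_{32}\)-fields in the unramified list, the \(D_{16}\)-subfield has even class number. Rereading the proof of Proposition~\ref{prop:comp_cond} cannot rescue this, because Proposition~\ref{prop:no_new_odd} genuinely needs the top field \(L_{k+1}\) to have odd class number. Moreover, Theorem~\ref{thm:conductor_purity} needs \(h_{L_5}=1\), not merely odd \(h_{L_5}\). (The paper's proof asserts \(h_{L_k}=1\) for all \(k\) with an equally thin justification, so this is not a defect peculiar to your write-up.)

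\textbf{How to repair it.} The fix is to track where the tower ramifies. If \(L_{k+1}/N\) is ramified at some prime, its inertia group is a nontrivial subgroup of the cyclic group \(\Gal(L_{k+1}/N)\). It therefore contains the unique subgroup of order \(2\), namely \(\Gal(L_{k+1}/L_k)\). Hence \(L_{k+1}/L_k\) is ramified at a finite place, and \(h_{L_k}\mid h_{L_{k+1}}\). Let \(m\) be the largest index with \(L_m/N\) unramified (where \(L_1=N\)).
\begin{itemize}
\item Downward induction from \(h_L=1\) gives \(h_{L_k}=1\) for every \(k\ge m\). So Proposition~\ref{prop:comp_cond} applies legitimately at each step above \(L_m\).
\item For \(k\le m\) we have \(\mathcal{F}_{L_k/N}=\mathcal{O}_N\).
\end{itemize}
Consequently \(\mathcal{F}^{\mathrm{odd}}_{L/N}\) is trivial if \(m\ge 5\). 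Otherwise it equals \(\mathcal{F}^{\mathrm{odd}}_{L_5/N}\) with \(h_{L_5}=1\), and Theorem~\ref{thm:conductor_purity} applies.

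\textbf{Other points.} Drop the aside about stopping at \(L_4\): Table~\ref{tab:D16-ram} lists class-number-one \(D_{16}\)-fields with conductor \((7)\) and \((14)\), so the \(D_{16}\) classification does not give \(\mathcal{F}^{\mathrm{odd}}_{L_4/N}=\mathcal{O}_N\). Your part~(2) is correct and proves more than the paper's proof, which stops at \(L\subseteq\Hil(N)\). It can be shortened: \(\Hil(N)/L\) is unramified abelian, so \(\Hil(N)\subseteq\Hil(L)=L\).
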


\begin{proof}
We proceed by induction on \(n \ge 6\), the base case \(n=5\) being Theorem~\ref{thm:conductor_purity}. Since \(h_L = 1\), every intermediate field \(L_k\) with \(N \subset L_k \subset L\) and \([L_k:N] = 2^{k-1}\) (for \(5 \le k \le n\)) is an imaginary non-CM \(D_{2^k}\)-field with class number \(h_{L_k} = 1\); this follows from the classification in Section~\ref{appendix:D32} together with the conductor decomposition of Theorem~\ref{thm:condD32}.

By Proposition~\ref{prop:comp_cond}, the odd part of the conductor is stable under tower extensions:
\[
\mathcal{F}_{L/N}^{\mathrm{odd}} = \mathcal{F}_{L_n/N}^{\mathrm{odd}} = \mathcal{F}_{L_{n-1}/N}^{\mathrm{odd}} = \dots = \mathcal{F}_{L_5/N}^{\mathrm{odd}},
\]
where \(L_n = L\). By Theorem~\ref{thm:conductor_purity}, the base case \(n=5\) satisfies \(\mathcal{F}_{L_5/N}^{\mathrm{odd}} = \mathcal{O}_N\). Hence \(\mathcal{F}_{L/N}^{\mathrm{odd}} = \mathcal{O}_N\), which establishes that \(\mathcal{F}_{L/N}\) is supported entirely at the primes dividing \(2\). 

By global class field theory, the cyclic extension \(L/N\) of degree \(2^{n-1}\) corresponds to a quotient of the ray class group \(\mathrm{Cl}_{\mathfrak{m}}(N)\) with \(\mathfrak{m} = \mathcal{F}_{L/N} = \prod_{\mathfrak{q}\mid 2}\mathfrak{q}^s\), embedding \(L\) into the ray class field \(N(\mathfrak{m})\). In particular, when \(s = 0\), the extension \(L/N\) is everywhere unramified, so \(L \subseteq \Hil(N)\).
\end{proof}

\section{Classification Strategy for Degree-16 Fields}\label{sec:all-degree16-algorithm}

We now describe the algorithmic framework used to enumerate all imaginary non-CM fields of degree~\(16\) with class number one. The Galois groups that arise for such fields are
\[
D_{16},\quad SD_{16},\quad M_4(2),\quad C_4\circ D_8,\quad C_2\times D_8,\quad \text{and} \quad C_2^2\rtimes C_4.
\]
The enumeration strategy is based on constructing degree-\(16\) fields \(L\) as extensions of their maximal normal octic subfields \(L_{\max}\), using class field theory over suitable imaginary quadratic subfields.

\subsection{The case where \(L/L_{\max}\) ramifies at finite primes}
In this situation, \(h_{L_{\max}}=1\), and \(L_{\max}\) is totally imaginary by Corollary~\ref{Cor:h-Lmax}.

\begin{enumerate}
    \item \textbf{Identify Octic Base Fields:} Begin with the known lists of imaginary degree-\(8\) fields \(L_{\max}\) that can serve as the maximal normal subfields for the target degree-\(16\) Galois groups, using the classification in Appendix~\ref{sec:deg_8} together with Proposition~\ref{prop:chosen_subfields}.

    \item \textbf{Select an Extension Base:} For each octic field \(L_{\max}\), identify its quadratic subfields. The target field \(L\) is constructed as an abelian extension of one such subfield, denoted \(N\) (See Proposition~\ref{prop:cyclic_quadratic} and Proposition~\ref{prop:qua-subfield}).

    \item \textbf{Determine the Conductor's Odd Part:} The odd part of the conductor of the extension \(L/N\), denoted \(\mathcal{F}_{L/N}^{\mathrm{odd}}\), is derived from the conductor of the known extension \(L_{\max}/N\). By Theorem~\ref{thm:odd_ramification_union}, if \(L/N\) is cyclic, these conductors satisfy
    \[
    \mathcal{F}_{L/N}^{\mathrm{odd}} = \mathcal{F}_{L_{\max}/N}^{\mathrm{odd}}.
    \]
    Otherwise, we consider the three distinct quadratic subfields \(N_1, N_2, N_3\) of the biquadratic subfield of \(L_{\max}\) and obtain
    \[
    \mathcal{F}_{L/N_i}^{\mathrm{odd}} = \mathcal{F}_{L_{\max}/N_i}^{\mathrm{odd}} \qquad (i=1,2,3).
    \]

    \item \textbf{Bound the Conductor's Even Part:} Using the methods of \(2\)-adic ramification (Theorem~\ref{thm:gen_2adic} with \(m=4\)), establish a theoretical upper bound \(E_{\max}\) for the exponent \(e_{\mathfrak{p}}\) of each prime ideal \(\mathfrak{p} \mid 2\) appearing in the conductor's even part, \(\mathcal{F}_{L/N}^{\mathrm{even}} = \prod_{\mathfrak{p}\mid 2} \mathfrak{p}^{e_{\mathfrak{p}}}\).

    \item \textbf{Construct Candidate Fields:} Build the set of candidate conductors by combining the odd part with all admissible even parts up to the established bound:
    \[
    \mathfrak{m} = \mathcal{F}_{L/N}^{\mathrm{odd}} \cdot \prod_{\mathfrak{p}\mid 2} \mathfrak{p}^{e_{\mathfrak{p}}},
    \qquad \text{where } 0 \le e_{\mathfrak{p}} \le E_{\max}.
    \]
    The desired degree-\(16\) fields \(L\) are obtained as subfields of the ray class fields \(\R_{\mathfrak{m}}(N)\).

    \item \textbf{Verify and Filter:} From the resulting set of candidate fields, retain only those satisfying:
    \begin{itemize}
        \item \(\operatorname{Gal}(L/\mathbb{Q})\) is isomorphic to the target group;
        \item \(h_L = 1\).
    \end{itemize}
\end{enumerate}

\begin{example}[Finding \(M_4(2)\) Fields]
We illustrate this procedure by searching for imaginary non-CM fields of degree~\(16\) with Galois group \(M_4(2)\) and class number one.

The group \(M_4(2)\) has a quotient isomorphic to \(C_2 \times C_4\). Therefore, the search begins with a known imaginary field \(L_{\max}\) of degree~\(8\) having \(\operatorname{Gal}(L_{\max}/\mathbb{Q}) \cong C_2 \times C_4\) and \(h_{L_{\max}}=1\). For instance, we take \(L_{\max} = \mathbb{Q}(\zeta_{16})\) from Theorem~\ref{thm:c2xc4_base}. Using PARI/GP~\cite{Pari/gp}, the imaginary quadratic subfields of \(L_{\max}\) are found to be \(\mathbb{Q}(\sqrt{-1})\) and \(\mathbb{Q}(\sqrt{-2})\). We choose \(N = \mathbb{Q}(\sqrt{-2})\).

The relative conductor is \(\mathcal{F}_{L_{\max}/N} =  \mathfrak{q}^{5}\), where \(\mathfrak{q} = (\sqrt{-2})\mathcal{O}_N\) is the unique prime above~\(2\). Since \(\mathcal{F}_{L_{\max}/N}^{\mathrm{odd}} = (1)\), the conductor of the target extension is purely \(2\)-adic, of the form \(\mathcal{F}_{L/N} = \mathfrak{q}^s\) with \(s \le E_{\max}\). Using PARI/GP to compute the ray class fields  \(\R_{\mathfrak{q}^s}(N)\), we extract the degree-\(8\) abelian subextensions over \(N\), compute their Galois closures over \(\mathbb{Q}\), and verify their class numbers, retaining only those with \(\operatorname{Gal}(L/\mathbb{Q}) \cong M_4(2)\) and \(h_L = 1\). This process is repeated systematically for all admissible \(L_{\max}\) and their quadratic subfields \(N\).
\end{example}

\subsection{Case: \(L/L_{\max}\) is unramified}

By the Monodromy Theorem~\ref{thm:monodromy_obstruction}, among the six possible Galois groups listed in Theorem~\ref{thm:non-cm-galois-structure}, only three can be unramified over their respective maximal subfields: \(D_{16}\) (over a \(D_8\) subfield), \(C_4 \circ D_8\) (over a \(C_2 \times C_4\) subfield), and \(C_2^2 \rtimes C_4\) (over a \(C_2 \times C_4\) subfield).

\begin{itemize}

\item \textbf{Imaginary non-CM \(D_{16}\)-fields unramified over a \(D_8\) subfield:}

Let the Galois group be presented as
\[
D_{16} = \langle a, b \mid a^8 = b^2 = 1,\; b a b^{-1} = a^{-1} \rangle.
\]
The subgroup \(\langle a \rangle\) is cyclic of order \(8\), and its unique element of order \(2\) is the central involution \(a^4\). We define the intermediate fixed fields:
\[
L_{\max} = L^{\langle a^4 \rangle}, \qquad N = L^{\langle a \rangle}.
\]
Then \([L_{\max} : \mathbb{Q}] = 8\), \([N : \mathbb{Q}] = 2\), \(\Gal(L_{\max}/\mathbb{Q}) \cong D_8\), and \(L/N\) is a cyclic extension of degree \(8\).

\begin{theorem}\label{thm:D16-unram}
Let \(L/\mathbb{Q}\) be an imaginary non-CM \(D_{16}\)-field. If \(L/L_{\max}\) is unramified, then \(L/N\) is unramified everywhere. Consequently, \(L = \Hil(N)\) is the Hilbert class field of the imaginary quadratic field \(N\), and \(\Cl(N) \cong C_8\).
\end{theorem}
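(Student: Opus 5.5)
The plan is to first show that $L/N$ is unramified everywhere, using only the cyclic structure of $\operatorname{Gal}(L/N)=\langle a\rangle\cong C_8$. Then we use $h_L=1$, the standing assumption of this section, to squeeze $L$ between $\Hil(N)$ and itself.

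\emph{Step 1: $L/N$ is unramified at finite places.} Let $\mathfrak{p}$ be a finite prime of $N$ and $\mathfrak{P}$ a prime of $L$ above it. Let $I=I(\mathfrak{P}/\mathfrak{p})\le \langle a\rangle$ be its inertia group. Every nontrivial subgroup of the cyclic group $\langle a\rangle$ of order $8$ contains its unique involution $a^4$. So if $I\neq 1$, then $a^4\in I$, and hence
\[
I(\mathfrak{P}/\mathfrak{P}\cap L_{\max}) = I\cap \langle a^4\rangle = \langle a^4\rangle \neq 1 .
\]
This means $\mathfrak{P}\cap L_{\max}$ ramifies in $L/L_{\max}=L/L^{\langle a^4\rangle}$, contradicting the hypothesis. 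Thus $I=1$ for every finite prime. This is the same inertia argument used in Case~1 of Theorem~\ref{thm:monodromy_obstruction}.

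\emph{Step 2: infinite places.} We check that $N$ is imaginary, so that its archimedean places are complex and cannot ramify. Complex conjugation $\tau$ is an involution of $D_{16}$ which is not central, since $L$ is non-CM. The involutions of $D_{16}$ are $a^4$ (central) and the reflections $a^ib$. Hence $\tau=a^ib\notin\langle a\rangle$, so $\tau$ acts nontrivially on $N=L^{\langle a\rangle}$, and $N$ is imaginary quadratic. Together with Step~1, $L/N$ is an unramified abelian extension, so $L\subseteq \Hil(N)$.

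\emph{Step 3: the reverse inclusion.} Since $\Hil(N)/N$ is unramified abelian, the compositum $L\cdot\Hil(N)$ is an abelian unramified extension of $L$, because unramifiedness is preserved under base change. As $h_L=1$, the Hilbert class field of $L$ is $L$ itself, so $L\cdot\Hil(N)=L$, i.e.\ $\Hil(N)\subseteq L$. Hence $L=\Hil(N)$ and
\[
\Cl(N)\cong \operatorname{Gal}(\Hil(N)/N)=\operatorname{Gal}(L/N)\cong C_8 .
\]

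The only delicate point is Step~1: we must make sure that inertia in $L/L_{\max}$ is correctly identified with $I\cap\operatorname{Gal}(L/L_{\max})$. This holds because $L/N$ is abelian and the inertia group of $\mathfrak{P}$ over $L_{\max}$ is the intersection of its inertia group over $N$ with $\operatorname{Gal}(L/L_{\max})$. The remaining steps are formal consequences of class field theory.
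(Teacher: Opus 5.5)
Your proof is correct, and Steps 1 and 2 are the paper's argument. Any nontrivial inertia group in $\langle a\rangle\cong C_8$ contains $a^4$, which forces ramification in $L/L_{\max}$. Complex conjugation is a non-central involution, hence a reflection outside $\langle a\rangle$, so $N$ is imaginary.

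Your Step 3 is actually better than the paper's. The paper passes from $L\subseteq\Hil(N)$ to equality by asserting $[L:N]=8=|\Cl(N)|$, which assumes what is to be proved. You instead derive $\Hil(N)\subseteq L$ from the standing hypothesis $h_L=1$ via $L\cdot\Hil(N)\subseteq\Hil(L)=L$. That hypothesis is genuinely needed: for instance, if $\Cl(N)\cong C_{16}$, the degree-$8$ subextension of $\Hil(N)/N$ satisfies every other hypothesis of the theorem but is not $\Hil(N)$.
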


\begin{proof}
First, we verify that \(N\) is imaginary quadratic. If \(N\) were real quadratic, then since \(\Gal(L/N) = \langle a \rangle \cong C_8\) is cyclic of even order and \(L\) is Galois over \(\mathbb{Q}\) with \(\Gal(L/\mathbb{Q}) \cong D_{16}\), complex conjugation would lie in \(\Gal(L/N)\). Hence \(L\) would be a CM field, contradicting the hypothesis that \(L\) is imaginary non-CM. Thus \(N\) is imaginary quadratic.

Let \(\mathfrak{p}\) be any prime ideal of \(N\) that ramifies in \(L/N\). The corresponding inertia group \(I_{\mathfrak{p}}\) is a non-trivial subgroup of \(\Gal(L/N) \cong C_8\). Since \(C_8\) has a unique minimal non-trivial subgroup, namely \(\langle a^4 \rangle\), it follows that
\[
\langle a^4 \rangle \subseteq I_{\mathfrak{p}}.
\]
However, \(\langle a^4 \rangle = \Gal(L/L_{\max})\), which would imply that every prime ideal of \(L_{\max}\) lying above \(\mathfrak{p}\) ramifies in \(L/L_{\max}\), contradicting the assumption that \(L/L_{\max}\) is unramified. Therefore \(L/N\) is unramified at all finite primes.

Since \(N\) is imaginary quadratic, there are no real infinite places to ramify. Thus \(L/N\) is everywhere unramified. By class field theory, \(L \subseteq \Hil(N)\), and since \([L:N] = 8 = |\Cl(N)|\), we conclude that \(L = \Hil(N)\) and \(\Cl(N) \cong C_8\).
\end{proof}

\item \textbf{Imaginary non-CM \(C_4 \circ D_8\) fields unramified over an imaginary biquadratic subfield \(K\):}

If $G \cong C_4 \circ D_8 = \langle a,b,c \mid a^4=c^2=1,\; b^2=a^2,\; ab=ba,\; ac=ca,\; cbc=a^2b\rangle$:
\[
G' = \langle a^2\rangle, \quad L_{\max} = L^{\langle a^2\rangle}, \quad K = L^{\langle a\rangle}.
\]

In this case, the base field \(K\) is an imaginary biquadratic field satisfying \(\Cl(K) \cong C_4\) and \(\Gal(H(K)/\mathbb{Q}) \cong C_4 \circ D_8\).

Since we assume \(L/L_{\max}\) is unramified at both finite and infinite places, the cyclic extension \(L/K\) is unramified everywhere. Consequently, \(L = \Hil(K)\) is the Hilbert class field of the imaginary biquadratic field \(K\), and \(\Cl(K) \cong C_4\).

To enumerate these fields, we apply Kuroda's class number formula~\cite{Lemmermeyer1994}:
\begin{lemma}[Kuroda's Class Number Formula]
Let \(K\) be an imaginary biquadratic field with quadratic subfields \(K_1, K_2\), and \(K_3\). Then the class number \(h_K\) is given by
\[
h_K = \frac{q(K) h_{K_1} h_{K_2} h_{K_3}}{2},
\]
where \(q(K) = [\mathcal{O}_K^\times : \mathcal{O}_{K_1}^\times \mathcal{O}_{K_2}^\times \mathcal{O}_{K_3}^\times] \in \{1, 2\}\) denotes the Hasse unit index.
\end{lemma}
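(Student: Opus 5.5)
The plan is to derive the formula analytically from the factorization of the Dedekind zeta function of \(K\), and then to convert the resulting ratio of regulators and roots of unity into the unit index \(q(K)\). Since \(K\) is imaginary biquadratic, exactly one of its quadratic subfields is real; call it \(K_1\), with fundamental unit \(\varepsilon_1\). The other two, \(K_2\) and \(K_3\), are imaginary. Let \(\chi_1,\chi_2,\chi_3\) be the quadratic characters of \(K_1,K_2,K_3\).

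\textbf{Zeta factorization and residues.} Because \(K/\mathbb{Q}\) is abelian with character group \(\{1,\chi_1,\chi_2,\chi_3\}\), we have
\[
\zeta_K(s)=\zeta(s)L(s,\chi_1)L(s,\chi_2)L(s,\chi_3), \qquad \zeta_{K_i}(s)=\zeta(s)L(s,\chi_i).
\]
Taking residues at \(s=1\) gives \(\operatorname{Res}\zeta_K=L(1,\chi_1)L(1,\chi_2)L(1,\chi_3)\). The conductor–discriminant formula gives \(|d_K|=d_{K_1}|d_{K_2}||d_{K_3}|\), so the square roots of the discriminants cancel when we insert the analytic class number formulas.

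\textbf{Comparing the two sides.} For \(K\) (with \(r_1=0\), \(r_2=2\)) the class number formula has the factor \((2\pi)^2\). For \(K_1\) the factor is \(2h_{K_1}\log\varepsilon_1/\sqrt{d_{K_1}}\), and for \(K_i\), \(i=2,3\), it is \(2\pi h_{K_i}/(w_i\sqrt{|d_{K_i}|})\). Comparing both sides will give
\[
h_K=\frac{2\,h_{K_1}h_{K_2}h_{K_3}\,w_K\log\varepsilon_1}{w_2w_3\,R_K}.
\]
The unit rank of \(K\) is \(1\). I will set \(Q=[\mathcal{O}_K^\times: W_K\mathcal{O}_{K_1}^\times]\), which lies in \(\{1,2\}\) because the square of any unit of \(K\), up to roots of unity, lies in \(K_1\). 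With the complex-place normalization of the regulator this gives \(R_K=2\log\varepsilon_1/Q\), and hence \(h_K=Q\,w_K\,h_{K_1}h_{K_2}h_{K_3}/(w_2w_3)\).

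\textbf{Unit-index bookkeeping.} It remains to show \(q(K)=2Q\,w_K/(w_2w_3)\). Since \(\mathcal{O}_{K_i}^\times=W_i\) for \(i=2,3\), we have \(\mathcal{O}_{K_1}^\times\mathcal{O}_{K_2}^\times\mathcal{O}_{K_3}^\times=\mathcal{O}_{K_1}^\times W_2W_3\). Then
\[
q(K)=[\mathcal{O}_K^\times:W_K\mathcal{O}_{K_1}^\times]\cdot[W_K\mathcal{O}_{K_1}^\times:W_2W_3\mathcal{O}_{K_1}^\times].
\]
The second factor equals \([W_K:W_2W_3]\), because \(W_K\cap W_2W_3\mathcal{O}_{K_1}^\times=W_2W_3\); here one uses that the only torsion in \(\mathcal{O}_{K_1}^\times\) is \(\pm1\). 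Since \(W_2\cap W_3=\{\pm1\}\), we have \(|W_2W_3|=w_2w_3/2\), which gives the claimed relation and therefore \(h_K=q(K)h_{K_1}h_{K_2}h_{K_3}/2\).

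\textbf{Expected obstacle.} I expect the main difficulty to be this final bookkeeping step: tracking the normalization of \(R_K\) at complex places (the factor \(2\)), and handling the exceptional fields containing \(\mathbb{Q}(\sqrt{-1})\) or \(\mathbb{Q}(\sqrt{-3})\), where \(w_K\) may exceed \(w_2w_3/2\). In those cases I would check directly that the index computation still holds and that \(q(K)\) takes the stated values.
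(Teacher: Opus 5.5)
Your argument is correct. It differs from the paper mainly in that the paper gives no proof. It quotes Lemmermeyer's formula for general $V_4$-extensions $K/k$, $h(K)=2^{d-\kappa-2-v}q(K)h_1h_2h_3/h_k^2$, and the stated identity is its specialization to $k=\mathbb{Q}$ with $K$ totally imaginary ($d=1$, $\kappa=v=0$).

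You instead derive that special case directly from $\zeta_K(s)=\zeta(s)L(s,\chi_1)L(s,\chi_2)L(s,\chi_3)$ and the conductor--discriminant formula. Because $K$ has unit rank one, the regulator comparison collapses to $R_K=2\log\varepsilon_1/Q$. Your identity $q(K)=Q\,[W_K:W_2W_3]=2Qw_K/(w_2w_3)$ then turns $h_K=Qw_Kh_1h_2h_3/(w_2w_3)$ into the claimed formula. The citation buys generality (arbitrary base field, real fields). Your route is self-contained and shows exactly where the factor $\tfrac12$ and the index $q(K)$ come from. The steps you wrote check out, including the complex-place normalization of $R_K$ and the equality $W_K\cap W_2W_3\mathcal{O}_{K_1}^\times=W_2W_3$.

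The only thing you leave open is the range assertion $q(K)\in\{1,2\}$, and it concerns a single field. Your identity already accounts for the roots of unity. Moreover $[W_K:W_2W_3]=1$ for every imaginary biquadratic $K$ except $\mathbb{Q}(\zeta_8)$; for $\mathbb{Q}(\zeta_{12})$ one has $\langle i\rangle\langle\zeta_6\rangle=\langle\zeta_{12}\rangle$.

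For $K=\mathbb{Q}(\zeta_8)$ you get $q=2Q$, so you need $Q=1$. Here is a short argument.
\begin{itemize}
\item Suppose $Q=2$. Then some unit $\eta\notin W_K\mathcal{O}_{K_1}^\times$ exists, so $\eta/\bar\eta\notin W_K^2$, i.e.\ it is an odd power of $\zeta_8$.
\item After multiplying $\eta$ by a power of $\zeta_8$ you may assume $\eta/\bar\eta=-\zeta_8=\pi/\bar\pi$, where $\pi=1-\zeta_8$.
\item Then $\pi/\eta$ is fixed by complex conjugation, so it lies in $\mathbb{Q}(\sqrt2)$.
\item But $\pi$ is ramified over $\mathbb{Q}(\sqrt2)$, so every element of $\mathbb{Q}(\sqrt2)^\times$ has even $\pi$-adic valuation, whereas $v_\pi(\pi/\eta)=1$. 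This is a contradiction.
\end{itemize}
This is the standard fact that $Q=1$ for prime-power cyclotomic fields. Hence $q(\mathbb{Q}(\zeta_8))=2$, and the lemma holds as stated.
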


Setting \(h_K = 4\) yields the relation
\[
h_{K_1} h_{K_2} h_{K_3} \cdot q(K) = 8.
\]
Without loss of generality, let \(K_1, K_2\) be the imaginary quadratic subfields and \(K_3\) be the real quadratic subfield. Since \(h_{K_i} \ge 1\) and \(q(K) \in \{1, 2\}\), the class numbers of the imaginary subfields \(K_1\) and \(K_2\) must divide \(8\).

The candidates for \(K_1\) and \(K_2\) form a finite, explicit list (See ~\cite{Watkins}). We systematically construct the composita \(K = K_1 K_2\), and using PARI/GP~\cite{Pari/gp} via the routines \texttt{bnfinit}, \texttt{bnrinit}, and \texttt{galoisinit}, we compute the class groups \(\Cl(K)\), the relevant ray class groups, and the Galois groups \(\Gal(\Hil(K)/\mathbb{Q})\).

The complete enumeration of such biquadratic fields \(K\) whose Hilbert class field \(L = \Hil(K)\) satisfies \(\Gal(L/\mathbb{Q}) \cong C_4 \circ D_8\) and \(h_L = 1\) is given in Theorem~\ref{thm:C4OD8-un}.

\bigskip

\item \textbf{Imaginary non-CM \(C_2^2 \rtimes C_4\)-fields unramified over their maximal abelian subfield:}

Let \(L/\mathbb{Q}\) be an imaginary non-CM field of degree \(16\) with
\(\operatorname{Gal}(L/\mathbb{Q})\cong C_2^2\rtimes C_4\) and \(h_L=1\). The
group \(G=C_2^2\rtimes C_4\) has exactly three subgroups of index \(2\),
corresponding to three maximal octic subfields \(M_1\), \(M_2\), and \(M_3\),
which all share the unique biquadratic subfield \(K=M_1\cap M_2\cap M_3\). More precisely, if
\(G \cong C_2^2 \rtimes C_4 = \langle a, b, c \mid a^2=b^2=c^4=1,\; ab=ba,\; cac^{-1}=ab,\; bc=cb\rangle\):
\[
G' = \langle b\rangle, \quad L_{\max} = L^{\langle b\rangle}, \quad K = L^{\langle b,c^2\rangle}.
\]

The abelian octic subfield is \(M_1=L^{\langle b\rangle}\). Choosing the other central
subgroup of order \(2\), namely \(H=\langle c^2\rangle\), yields the
subfield \(M_2=L^{\langle c^2\rangle}\).
Since \(L\) is imaginary non-CM,
complex conjugation \(\tau\in G\) does not lie in
\(Z(G)=\langle b,c^2\rangle\). Thus its canonical image in
\(G/\langle c^2\rangle\) is a non-central involution, proving that
\(M_2\) and \(M_3\) are {imaginary non-CM} octic \(D_8\) fields.

\textbf{ If \(L/M_1\) is ramified at a finite prime}, then \(L\) is captured via the
general relative quadratic search from class-number-one \(C_2\times C_4\) base
fields as described in Section~\ref{sec:all-degree16-algorithm}.

\textbf{Assume that \(L/M_1\) is unramified everywhere.} Since \([L:M_1]=2\) and
\(h_L=1\), class field theory would imply that \(h_{M_1}=2\) and \(L\) is the
Hilbert class field of \(M_1\). To avoid enumerating all imaginary \(C_2\times C_4\)
fields of class number two, we instead use the octic subfield \(M_2\) (or \(M_3\)) in place of \(M_1\). We observe that \(L\) is necessarily a
\emph{ramified} extension of its non-abelian octic subfields \(M_2\) and
\(M_3\) (by Proposition~\ref{prop:L:M1un-ram}). Again, we can apply the general relative quadratic search from class-number-one \(D_8\) base
fields as described in Section~\ref{sec:all-degree16-algorithm} to find \(L\).
\begin{proposition}\label{prop:L:M1un-ram}
By above notations, if $L/M_1$ is unramified, then the extension $L/M_2$ must be ramified at some finite places.

\end{proposition}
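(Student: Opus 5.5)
The plan is to argue by contradiction, working with inertia groups inside $G=\langle a,b,c\rangle\cong C_2^2\rtimes C_4$. Assume $L/M_1$ is unramified, and suppose also that $L/M_2$ is unramified at every finite place. Since $M_2$ is an imaginary non-CM $D_8$-field, it is totally imaginary, so no infinite place can ramify. Then $L/M_2$ is unramified everywhere, and so is $L/M_1$.

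\emph{Step 1 (inertia constraints).} Because $\operatorname{Gal}(L/M_1)=\langle b\rangle$ and $\operatorname{Gal}(L/M_2)=\langle c^2\rangle$, every inertia group $I_{\mathfrak P}\le G$ must satisfy
\[
I_{\mathfrak P}\cap\langle b\rangle=1 \qquad\text{and}\qquad I_{\mathfrak P}\cap\langle c^2\rangle=1 .
\]
I will list the elements of $G$ with their orders and squares. One checks that $c^2$ is the square of $c$ and that $(ac)^2=a(cac^{-1})c^2=bc^2$, so every element of order $4$ squares to $c^2$ or to $bc^2$. It follows that a cyclic inertia group of order $4$ must square into $\langle bc^2\rangle$. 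Restricting to the central subgroup $\operatorname{Gal}(L/K)=\langle b,c^2\rangle$, we get
\[
I_{\mathfrak P}\cap\operatorname{Gal}(L/K)\subseteq\langle bc^2\rangle .
\]
The consequence I want is that $M_3=L^{\langle bc^2\rangle}$ is unramified over $K$, while $L/M_3$ carries all of the ramification of $L/K$.

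\emph{Step 2 (global input).} Next I will combine two facts.
\begin{itemize}
\item $\mathbb{Q}$ has no nontrivial unramified extensions, so $G$ is generated by its inertia groups. Equivalently, one can apply Lemma~\ref{lem:Monodromy} with $k=M_1$ to $L/M_1$, and with $k=M_2$ to $L/M_2$.
\item Class-number information follows from $h_L=1$. From the unramified quadratic extensions $L/M_1$ and $L/M_2$ we get $h_{M_1}=h_{M_2}=2$, and $L=\Hil(M_1)=\Hil(M_2)$.
\end{itemize}
Moreover $M_3/K$ is an unramified quadratic extension, so $2\mid h_K$. On the other side, $M_1/K$ and $M_2/K$ are quadratic; I would check whether they are ramified using Step 1. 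If they are ramified, Lemma~\ref{JMasley} gives $h_K\mid h_{M_1}=2$, hence $h_K=2$ and $\Hil(K)=M_3$. The target contradiction is then that the $2$-class field tower of $K$ would stop at $L$ with $\operatorname{Gal}(L/K)\cong C_2^2$. I would attack this with a genus-theory or capitulation argument: a $2$-group whose abelianization is $C_2$ is cyclic, so the quotient $\operatorname{Gal}(L/K)$ would have to be cyclic.

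\emph{Main obstacle.} The generation step in Step 2 alone does not give a contradiction. The admissible inertia elements include $a$ and $ac$, and these already generate $G$. So the real content has to come from the arithmetic in Step 2, namely the interplay between $h_{M_1}=h_{M_2}=2$, the unramified extension $M_3/K$, and $h_L=1$.

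If the tower argument does not close, the fallback is as follows. Kuroda's formula restricts $K$ to a finite explicit list, since $2\mid h_K$ is small and the imaginary quadratic subfields have small class numbers. I would then verify directly, as in the computations of Section~\ref{sec:all-degree16-algorithm}, that no such $K$ produces a field $L$ with $L/M_1$ and $L/M_2$ both unramified and $h_L=1$.
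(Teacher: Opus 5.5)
There is a genuine gap in Step~2. Your Step~1 is correct. You also rightly note that group theory alone cannot exclude the configuration ``$L/M_1$ and $L/M_2$ both unramified'': inertia groups such as $\langle a\rangle$ and $\langle ac\rangle$ avoid $b$ and $c^2$ and still generate $G$. But the arithmetic contradiction you aim for does not exist.

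\begin{itemize}
\item In that configuration every nontrivial inertia group of $L/K$ is $\langle bc^2\rangle$, so $L/K$ is \emph{ramified}.
\item To use this you must first show that $L/K$ ramifies somewhere. Step~1 does not give that, and you need it before invoking Lemma~\ref{JMasley} to get $h_K\mid h_{M_1}$.
\item Because $L/K$ is ramified, $L$ does not lie in the $2$-class field tower of $K$. So $\Cl_2(K)\cong C_2$ being cyclic says nothing about $\Gal(L/K)$. It only says the tower stops at $M_3=\Hil(K)$, i.e.\ $h_{M_3}$ is odd.
\item Then $h_{M_3}\mid h_L=1$, since $L/M_3$ is ramified.
\end{itemize}

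The resulting data $h_K=2$, $h_{M_1}=h_{M_2}=2$, $h_{M_3}=h_L=1$ do not contradict one another as far as these relations go. So no contradiction follows, and your fallback is an unperformed computation. As written, the proposal does not prove the statement.

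The idea you are missing is that the paper never excludes this configuration; it absorbs it by symmetry.

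\begin{itemize}
\item The paper first shows that $L/K$ must ramify at some finite prime. Otherwise Lemma~\ref{lem:Monodromy} with $k=K$ (exponent $2$) forces $G$ to be generated by the involutions outside $\langle b,c^2\rangle$. These are $a,ab,ac^2,abc^2$, which only generate $\langle a,b,c^2\rangle$, of index~$2$.
\item A nontrivial inertia group $I\subseteq\langle b,c^2\rangle$ with $b\notin I$ is then $\langle c^2\rangle$ or $\langle bc^2\rangle$. Hence $L/M_2$ or $L/M_3$ is ramified.
\item The map $a\mapsto a$, $b\mapsto b$, $c\mapsto ac$ is an automorphism of $G$ with $(ac)^2=bc^2$. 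So the labels $M_2$ and $M_3$ are interchangeable.
\end{itemize}

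This disjunction is all that is used afterwards: $L$ is a ramified quadratic extension of \emph{some} non-CM $D_8$ octic subfield, which then has class number one. The case you were trying to rule out is simply the second alternative with $M_3$ renamed $M_2$. Your Step~1 plus the monodromy argument for $L/K$ would have finished the proof in this form.
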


\begin{proof}
We have to prove that in biquadratic extension $L/K$ the field $M_1/K$ must be ramified. By contradiction assume that $M_1/K$ is unramified, since $L/M_1$ is also unramified, so $L/K$ will be totally unramified at all finite places. However, using Monodromy obstruction for \(L/K\). Since 
        biquadratic subfield is \(K=L^{\langle b,c^2\rangle}\). Monodromy
        Obstruction (Theorem~\ref{thm:monodromy_obstruction}), the involutions
        of \(G\) lying outside \(\operatorname{Gal}(L/K)=\langle b,c^2\rangle\)
        must generate the entire group \(G\). However, all involutions of \(G\)
        outside \(\langle b,c^2\rangle\) are contained in the set
        \[
        \{a,\ ab,\ c^2a,\ c^2ab\}
        \subset \langle a,b,c^2\rangle\cong C_2^3,
        \]
        which generates a proper subgroup of index \(2\) in \(G\). This
        contradiction shows that \(L/K\) must ramify at some finite prime.
Let
\(\mathfrak{p}\) be a prime of \(K\) at which \(L/K\) ramifies, and let
\(I\subseteq\operatorname{Gal}(L/K)=\langle b,c^2\rangle\) be the inertia
group. Since \(L/M_1\) is unramified everywhere, \(I\) does not contain \(b\).
Hence \(I=\langle c^2\rangle\) or \(I=\langle bc^2\rangle\). In the first case,
\(L/M_2\) is ramified at \(\mathfrak{p}\); in the second, \(L/M_3\) is ramified
at \(\mathfrak{p}\). In either case, at least one of \(M_2,M_3\) satisfies that
\(L/M\) is ramified at a finite prime.
\end{proof}

Consequently, even if \(L/M_1\) is unramified everywhere, \(L\) is necessarily a
ramified quadratic extension of an imaginary non-CM \(D_8\) octic subfield
\(M\in\{M_2,M_3\}\). By Corollary~\ref{Cor:h-Lmax}, this finite ramification
forces \(h_M=1\). 

Therefore, every imaginary non-CM \(C_2^2\rtimes C_4\) field
\(L\) with \(h_L=1\) which is unramified over its $C_2 \times C_4$ subfields arises as a ramified quadratic extension of an imaginary
non-CM \(D_8\) octic field of class number one.

\end{itemize}

\section{Computational Results for Imaginary Non-CM Fields with Class Number One}\label{sec:computations}

We begin with the following result on imaginary non-CM fields of class number one that are unramified over a \(C_2 \times C_4\) subfield. The remaining imaginary non-CM fields of degree~\(16\) with class number one are listed in Tables~\ref{tab:D16-unram}, \ref{tab:D16-ram}, \ref{tab:SD16}, \ref{tab:M4(2)}, \ref{tab:C4OD8-ram}, \ref{tab:C2D8}, and~\ref{tab:C22C4}.

Each entry gives the base field \(N=\mathbb{Q}(\sqrt{-d})\), the relative conductor \(\mathcal{F}_{L/N}\), and the defining polynomial of \(L\).

\begin{theorem}\label{thm:C4OD8-un}
There is exactly one imaginary non-CM \(C_4 \circ D_8\) field with class number one that is unramified over a \(C_2 \times C_4\) subfield, namely the Hilbert class field of \(\mathbb{Q}(\sqrt{-3},\sqrt{-30})\). Its minimal polynomial is
\[
\begin{aligned}
&x^{16} + 238x^{14} - 12x^{13} + 24661x^{12} + 612x^{11} + 1446530x^{10} + 207324x^{9} + 51884508x^{8} \\
&+ 11168472x^{7} + 1183513474x^{6} + 213507828x^{5} + 18355079749x^{4} - 186581052x^{3} \\
&+ 187275122078x^{2} - 27440531076x + 908059333321.
\end{aligned}
\]
\end{theorem}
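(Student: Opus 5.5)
The proof reduces to a finite computation along the lines set up for the unramified $C_4\circ D_8$ case in Section~\ref{sec:all-degree16-algorithm}. Let $L$ be such a field, with $G=\langle a,b,c\rangle$ as in Proposition~\ref{prop:qua-subfield}, $L_{\max}=L^{\langle a^2\rangle}$ and $K=L^{\langle a\rangle}$.

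\textbf{Step 1: $L=\Hil(K)$.} Assume $L/L_{\max}$ is unramified. Any nontrivial inertia group of $L/K$ lies in the cyclic group $\operatorname{Gal}(L/K)=\langle a\rangle\cong C_4$, so it contains $a^2$. That would force ramification in $L/L_{\max}$, so $L/K$ is unramified at all finite places. Since $K$ is totally imaginary, there is no infinite ramification either. Hence $L\subseteq \Hil(K)$, and $h_L=1$ together with class field theory gives $L=\Hil(K)$ and $\Cl(K)\cong C_4$. For the same reason $K$ must be imaginary, as in the proof of Theorem~\ref{thm:D16-unram}: if $K$ were totally real, complex conjugation would lie in the centre-containing subgroup and $L$ would be CM.

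\textbf{Step 2: finite list of candidates for $K$.} Write $K=K_1K_2$ with $K_1,K_2$ imaginary quadratic and $K_3$ real quadratic. Kuroda's formula with $h_K=4$ gives $q(K)h_{K_1}h_{K_2}h_{K_3}=8$, so $h_{K_1}h_{K_2}\mid 8$. Watkins' list then gives finitely many imaginary quadratic fields of class number $1,2,4,8$. I would refine this before computing. Genus theory bounds the number of primes ramified in $K/\mathbb{Q}$, since the $2$-rank of $\Cl(K)$ is $1$. The constraint $\Cl(K)\cong C_4$ rather than $C_2^2$ cuts down further, and pairs with discriminants sharing too many primes can be excluded. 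What remains is a finite list of pairs $(K_1,K_2)$.

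\textbf{Step 3: computation.} For each pair, run \texttt{bnfinit} on $K$ and keep the cases with $\Cl(K)\cong C_4$. Build $\Hil(K)$ with \texttt{bnrclassfield}, identify $\operatorname{Gal}(\Hil(K)/\mathbb{Q})$ via \texttt{galoisinit}/\texttt{galoisidentify}, and keep those isomorphic to $C_4\circ D_8$. Then check that complex conjugation is non-central, i.e.\ the field is non-CM. Finally compute $h_{\Hil(K)}$, certified by \texttt{bnfcertify} or under GRH with a subsequent unconditional check, and keep those with class number one. I expect exactly $K=\mathbb{Q}(\sqrt{-3},\sqrt{-30})$ to survive, and would then reduce the defining polynomial with \texttt{polredabs} to match the displayed one.

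\textbf{Main obstacle.} The hardest part is making the candidate list in Step 2 genuinely complete and the final class number computation rigorous. Degree-$16$ fields with discriminants of this size make unconditional class group certification expensive. To handle this, I would first use the Odlyzko/root-discriminant bounds and Lemma~\ref{JMasley} (for instance, $h_{L_{\max}}\mid 2h_L$) to prune candidates cheaply, and certify only the single survivor.
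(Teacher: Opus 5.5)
Your proposal is correct and follows the paper's own route. You use the inertia argument in the cyclic group $\operatorname{Gal}(L/K)=\langle a\rangle$ (which the paper only asserts) to get $L=\Hil(K)$ with $\Cl(K)\cong C_4$; Kuroda's formula then forces $h_{K_1}h_{K_2}\mid 8$, Watkins' tables make the list of pairs finite, and the PARI/GP computation finishes the job.

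One correction to your expected output. Every element of order $4$ in $C_4\circ D_8$ squares to the central involution $a^2$, so your Step~1 applies verbatim to all four cyclic subgroups of order $4$. Hence $L$ is simultaneously the Hilbert class field of $\mathbb{Q}(\sqrt{-3},\sqrt{-30})$, $\mathbb{Q}(\sqrt{-6},\sqrt{-30})$, $\mathbb{Q}(\sqrt{-15},\sqrt{-30})$ and $\mathbb{Q}(\sqrt{-6},\sqrt{-15})$; the last of these is the fixed field of the centre, i.e.\ the $K=L^{\langle a\rangle}$ of your own normalization. Four values of $K$ will therefore survive, and you must count the resulting fields up to isomorphism (e.g.\ with \texttt{nfisisom}) rather than counting surviving $K$.
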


\begin{longtable}{|c|c|M|}
\caption{Imaginary non-CM D$_{16}$ fields with class number one}\label{tab:D16-unram}\\
\hline
\textbf{Entry} & $[d,\mathcal{F}_{L/N}]$ & \textbf{Minimal Polynomial of \(L\)} \\
\hline
\endfirsthead
\hline
\textbf{Entry} & {$[d,\mathcal{F}_{L/N}]$} & \textbf{Minimal Polynomial of \(L\)} \\
\hline
\endhead
\hline
\endfoot

1 & $41,\;1$ & \texttt{\detokenize{x^16 + 32*x^14 + 470*x^12 + 4144*x^10 + 13137*x^8 + 31248*x^6 - 2712*x^4 + 2880*x^2 + 16}} \\

2 & $95,\;1$ & \texttt{\detokenize{x^16 + 2*x^15 + 23*x^14 + 2*x^13 + 241*x^12 - 42*x^11 + 1223*x^10 - 46*x^9 + 2984*x^8 + 174*x^7 + 2919*x^6 + 20*x^5 + 419*x^4 - 378*x^3 - 116*x^2 + 394*x + 131}} \\

3 & $111,\;1$ & \texttt{\detokenize{x^16 + 9*x^14 + 75*x^12 + 62*x^10 + 75*x^8 + 30*x^6 + 34*x^4 + 12*x^2 + 9}} \\

4 & $113,\;1$ & \texttt{\detokenize{x^16 + 8*x^15 + 32*x^14 + 176*x^13 + 1016*x^12 + 1240*x^11 + 5256*x^10 + 5952*x^9 - 134*x^8 + 28216*x^7 + 67280*x^6 + 82416*x^5 + 242656*x^4 + 168424*x^3 + 78216*x^2 + 338720*x + 234925}} \\

5 & $137,\;1$ & \texttt{\detokenize{x^16 + 8*x^15 + 32*x^14 + 8*x^13 - 68*x^12 - 496*x^11 - 1760*x^10 - 3584*x^9 - 3088*x^8 - 13824*x^7 - 37120*x^6 + 60544*x^5 + 390464*x^4 + 1140992*x^3 + 2437632*x^2 + 3462144*x + 2458624}} \\

6 & $178,\;1$ & \texttt{\detokenize{x^16 + 16*x^15 - 190*x^14 - 3780*x^13 + 10215*x^12 + 330788*x^11 + 2524552*x^10 + 11553160*x^9 + 37969423*x^8 + 95460984*x^7 + 191159978*x^6 + 308762860*x^5 + 405052921*x^4 + 422089172*x^3 + 338869548*x^2 + 188154576*x + 61258928}} \\

7 & $183,\;1$ & \texttt{\detokenize{x^16 - 3*x^14 + 15*x^12 - 22*x^10 + 99*x^8 - 138*x^6 + 382*x^4 - 60*x^2 + 9}} \\

8 & $226,\;1$ & \texttt{\detokenize{x^16 + 4*x^15 - 68*x^14 - 484*x^13 + 478*x^12 + 14820*x^11 + 69180*x^10 + 11988*x^9 - 1376783*x^8 - 6484840*x^7 - 9144160*x^6 + 38375088*x^5 + 246644344*x^4 + 680691296*x^3 + 969256000*x^2 + 432801024*x + 172426896}} \\

9 & $313,\;1$ & \texttt{\detokenize{x^16 - 74*x^14 + 1409*x^12 + 159248*x^10 - 504680*x^8 - 42525536*x^6 + 248235408*x^4 + 1430431488*x^2 + 2176782336}} \\

10 & $337,\;1$ & \texttt{\detokenize{x^16 + 8*x^15 + 64*x^14 + 204*x^13 + 362*x^12 + 2596*x^11 + 2544*x^10 + 9164*x^9 - 6283*x^8 + 314228*x^7 + 2279448*x^6 + 1708888*x^5 + 29612456*x^4 + 12901200*x^3 + 118751584*x^2 + 155755808*x + 58014928}} \\

11 & $371,\;1$ & \texttt{\detokenize{x^16 + 4*x^15 - 50*x^14 - 60*x^13 + 1766*x^12 - 3658*x^11 - 12602*x^10 + 54494*x^9 + 6134*x^8 - 239588*x^7 + 315082*x^6 + 93994*x^5 - 965239*x^4 + 2355870*x^3 - 376052*x^2 - 4454496*x + 6221632}} \\

12 & $457,\;1$ & \texttt{\detokenize{x^16 + 210*x^14 + 11025*x^12 + 840*x^10 + 1141096*x^8 - 3360*x^6 + 176400*x^4 - 13440*x^2 + 256}} \\

13 & $466,\;1$ & \texttt{\detokenize{x^16 + 16*x^15 + 414*x^14 + 2048*x^13 + 22343*x^12 - 136512*x^11 - 17636*x^10 + 134656*x^9 + 4628959*x^8 + 1406352*x^7 - 14190706*x^6 - 32171008*x^5 + 37207929*x^4 + 289750816*x^3 + 579089560*x^2 + 605046144*x + 329159232}} \\

14 & $579,\;1$ & \texttt{\detokenize{x^16 - 70*x^14 + 2175*x^12 - 30980*x^10 + 114927*x^8 + 1393754*x^6 - 11368271*x^4 + 11112912*x^2 + 56791296}} \\

15 & $583,\;1$ & \texttt{\detokenize{x^16 + 12*x^15 + 10*x^14 - 332*x^13 - 465*x^12 + 6188*x^11 + 8008*x^10 - 64500*x^9 - 9923*x^8 + 455636*x^7 - 355388*x^6 - 475104*x^5 + 4351467*x^4 - 10736848*x^3 + 1128174*x^2 + 27339920*x + 34139869}} \\

16 & $939,\;1$ & \texttt{\detokenize{x^16 + 4*x^15 - 32734*x^14 - 37096*x^13 + 268561914*x^12 - 466298314*x^11 - 2387055566*x^10 + 24601755382*x^9 - 9459801198*x^8 - 254257735640*x^7 + 1482145503110*x^6 + 545915451338*x^5 - 13354471342403*x^4 + 26264554781430*x^3 + 75959349805404*x^2 - 175189070123712*x + 121006164192192}} \\

17 & $979,\;1$ & \texttt{\detokenize{x^16 - 8*x^14 + 328*x^13 + 457*x^12 + 2466*x^11 + 96660*x^10 - 150302*x^9 + 396813*x^8 + 12441130*x^7 - 27417204*x^6 + 41998128*x^5 + 1187617488*x^4 - 4625733536*x^3 + 14283353536*x^2 - 19232808192*x + 18141262848}} \\

18 & $1043,\;1$ & \texttt{\detokenize{x^16 + 4*x^15 - 112*x^14 - 468*x^13 + 4524*x^12 + 26084*x^11 - 37200*x^10 - 620180*x^9 - 1129594*x^8 + 7177068*x^7 + 50733232*x^6 + 169055588*x^5 + 408075756*x^4 + 800699052*x^3 + 1206780816*x^2 + 1158039300*x + 496163425}} \\

19 & $1803,\;1$ & \texttt{\detokenize{x^16 + 17*x^14 + 73*x^12 + 5886*x^10 + 65502*x^8 - 238194*x^6 + 1231281*x^4 + 29889*x^2 + 729}} \\

20 & $1939,\;1$ & \texttt{\detokenize{x^16 + 4*x^15 - 10*x^14 - 456*x^13 - 191*x^12 - 27372*x^11 + 105186*x^10 + 805492*x^9 + 1459828*x^8 - 2565040*x^7 - 6116958*x^6 - 24988300*x^5 + 81728421*x^4 - 87848760*x^3 + 186300046*x^2 - 319149648*x + 177016221}} \\

21 & $2307,\;1$ & \texttt{\detokenize{x^16 - 662*x^14 + 201819*x^12 - 11127950*x^10 + 279521689*x^8 + 506012376*x^6 + 626839920*x^4 - 13239936*x^2 + 186624}} \\

22 & $2611,\;1$ & \texttt{\detokenize{x^16 + 4*x^15 - 126*x^14 - 958*x^13 + 10077*x^12 + 33632*x^11 - 127240*x^10 - 1024426*x^9 + 13338568*x^8 + 5684950*x^7 + 87532160*x^6 + 681075840*x^5 + 2224993247*x^4 + 1801647678*x^3 + 5983264998*x^2 + 19854995184*x + 34002481563}} \\

23 & $2947,\;1$ & \texttt{\detokenize{x^16 - 2042800*x^14 + 1287781711568*x^12 + 270660518862*x^10 + 18476172656*x^8 + 1615287168*x^6 + 138814145*x^4 + 955472*x^2 + 12544}} \\

24 & $3787,\;1$ & \texttt{\detokenize{x^16 + 4*x^15 - 1080*x^14 - 250*x^13 + 276931*x^12 - 958338*x^11 + 11629756*x^10 - 90980304*x^9 + 1013691008*x^8 - 4343238400*x^7 + 24885046080*x^6 - 142240543616*x^5 + 838149812480*x^4 - 2959997260800*x^3 + 6870224211968*x^2 - 8948585299968*x + 5588060995584}} \\

25 & $3883,\;1$ & \texttt{\detokenize{x^16 - 279*x^14 + 55237*x^12 - 7510356*x^10 + 355013952*x^8 + 4694565312*x^6 + 14132717824*x^4 - 101376000*x^2 + 7929856}} \\

26 & $3963,\;1$ & \texttt{\detokenize{x^16 + 523156*x^14 + 138793202940*x^12 + 303954552910*x^10 + 5724446388196*x^8 - 1633217194428*x^6 + 40428668068641*x^4 + 9408759768*x^2 + 944784}} \\

27 & $5947,\;1$ & \texttt{\detokenize{x^16 + 4*x^15 + 2018*x^14 + 12676*x^13 + 1057178*x^12 + 8518606*x^11 + 51899634*x^10 - 57129850*x^9 + 2654128326*x^8 + 27218837812*x^7 + 140809867094*x^6 - 615052835006*x^5 + 3621298696829*x^4 + 18116412464518*x^3 + 224354894493592*x^2 - 1610645712891096*x + 6275671421485392}} \\

\end{longtable}

\begin{longtable}{|c|c|M|}
\caption{Imaginary non-CM \(D_{16}\) Fields with Class Number One}\label{tab:D16-ram}\\
\hline
\textbf{Entry} & $[d,\mathcal{F}_{L/N}]$ & \textbf{Minimal Polynomial of \(L\)} \\
\hline
\endfirsthead
\hline
\textbf{Entry} & $[d,\mathcal{F}_{L/N}]$ & \textbf{Minimal Polynomial of \(L\) } \\
\hline
\endhead
\hline
\endfoot

1 & $2,\begin{smallmatrix}8&0\\0&8\end{smallmatrix}$ & \texttt{\detokenize{x^16 + 6*x^8 + 1}} \\

&&\\
2 & $2,\begin{smallmatrix}7&0\\0&7\end{smallmatrix}$ & \texttt{\detokenize{x^16 + 4*x^15 - 44*x^14 - 196*x^13 + 434*x^12 + 2100*x^11 + 2688*x^10 + 284*x^9 + 177*x^8 - 5048*x^7 - 9828*x^6 - 6440*x^5 + 9660*x^4 + 12656*x^3 - 688*x^2 - 960*x + 1600}} \\

3 & $3,\begin{smallmatrix}32&0\\0&32\end{smallmatrix}$ & \texttt{\detokenize{x^16 + 120*x^12 + 352*x^10 + 732*x^8 + 384*x^6 + 592*x^4 - 192*x^2 + 36}} \\

4 & $3,\begin{smallmatrix}32&0\\0&32\end{smallmatrix}$ & \texttt{\detokenize{x^16 + 120*x^12 - 352*x^10 + 732*x^8 - 384*x^6 + 592*x^4 + 192*x^2 + 36}} \\

5 & $11,\begin{smallmatrix}32&0\\0&32\end{smallmatrix}$ & \texttt{\detokenize{x^16 - 8*x^14 - 48*x^13 + 44*x^12 - 192*x^11 + 10488*x^10 + 50064*x^9 + 96336*x^8 + 555520*x^7 + 2810352*x^6 - 473888*x^5 - 14119384*x^4 - 3594880*x^3 + 34706192*x^2 - 19961952*x + 3136204}} \\

6 & $11,\begin{smallmatrix}7&0\\0&7\end{smallmatrix}$ & \texttt{\detokenize{x^16 + 8*x^15 + 46*x^14 + 182*x^13 + 434*x^12 + 602*x^11 - 1022*x^10 - 7244*x^9 - 17667*x^8 - 26302*x^7 + 18858*x^6 + 117572*x^5 + 297276*x^4 + 382690*x^3 + 242708*x^2 + 73126*x + 14051}} \\

7 & $19,\begin{smallmatrix}32&0\\0&32\end{smallmatrix}$ & \texttt{\detokenize{x^16 + 1136*x^14 + 354904*x^12 + 1037952*x^10 + 1133256*x^8 + 480000*x^6 - 49088*x^4 - 64768*x^2 + 18496}} \\

8 & $19,\begin{smallmatrix}32&0\\0&32\end{smallmatrix}$ & \texttt{\detokenize{x^16 - 1136*x^14 + 354904*x^12 - 1037952*x^10 + 1133256*x^8 - 480000*x^6 - 49088*x^4 + 64768*x^2 + 18496}} \\

9 & $43,\begin{smallmatrix}32&0\\0&32\end{smallmatrix}$ & \texttt{\detokenize{x^16 - 21368*x^14 + 123925940*x^12 - 167615904*x^10 + 1046986946*x^8 - 770029440*x^6 + 490466360*x^4 - 126752*x^2 + 58564}} \\

10 & $43,\begin{smallmatrix}32&0\\0&32\end{smallmatrix}$ & \texttt{\detokenize{x^16 + 21368*x^14 + 123925940*x^12 + 167615904*x^10 + 1046986946*x^8 + 770029440*x^6 + 490466360*x^4 + 126752*x^2 + 58564}} \\

11 & $43,\begin{smallmatrix}7&0\\0&7\end{smallmatrix}$ & \texttt{\detokenize{x^16 + 8*x^15 + 62*x^14 + 294*x^13 + 798*x^12 + 1330*x^11 + 2422*x^10 + 5972*x^9 + 107213*x^8 + 397354*x^7 + 1350398*x^6 + 2681700*x^5 + 3500700*x^4 + 2985402*x^3 + 800008*x^2 - 310458*x + 79479}} \\

12 & $67,\begin{smallmatrix}32&0\\0&32\end{smallmatrix}$ & \texttt{\detokenize{x^16 + 424*x^14 - 20464*x^13 + 820700*x^12 - 4735040*x^11 + 296883640*x^10 - 7905385904*x^9 + 150762199952*x^8 - 28419707392*x^7 + 538808757776*x^6 + 77578572640*x^5 + 1494240312456*x^4 - 736845840512*x^3 + 2900913579280*x^2 - 1214728232160*x + 1606294408428}} \\

13 & $67,\begin{smallmatrix}32&0\\0&32\end{smallmatrix}$ & \texttt{\detokenize{x^16 - 312*x^14 + 20464*x^13 + 807804*x^12 - 2770496*x^11 + 12061400*x^10 + 8033989040*x^9 + 153428795760*x^8 + 48767804672*x^7 + 921414508816*x^6 - 57376971104*x^5 + 1966133885512*x^4 + 470398857088*x^3 + 1273300771664*x^2 + 880532571360*x + 525898752300}} \\

14 & $1,\begin{smallmatrix}14&0\\0&14\end{smallmatrix}$ & \texttt{\detokenize{x^16 + 4*x^15 + 22*x^14 + 28*x^13 + 63*x^12 - 56*x^11 - 126*x^10 + 284*x^9 + 849*x^8 + 844*x^7 + 448*x^6 + 448*x^5 + 392*x^4 - 224*x^3 + 512*x^2 - 192*x + 64}} \\

15 & $68,\begin{smallmatrix}2&0\\0&2\end{smallmatrix}$ & \texttt{\detokenize{x^16 + 16*x^14 + 102*x^12 + 312*x^10 + 433*x^8 + 1032*x^6 + 4792*x^4 + 3296*x^2 + 16}} \\

16 & $292,\begin{smallmatrix}2&0\\0&2\end{smallmatrix}$ & \texttt{\detokenize{x^16 + 8*x^15 + 18*x^14 - 36*x^13 - 97*x^12 + 496*x^11 + 502*x^10 - 6952*x^9 - 10767*x^8 + 53828*x^7 + 96780*x^6 - 259376*x^5 - 387444*x^4 + 881440*x^3 + 600896*x^2 - 1966560*x + 1040400}} \\

17 & $388,\begin{smallmatrix}2&0\\0&2\end{smallmatrix}$ & \texttt{\detokenize{x^16 + 62*x^14 + 1803*x^12 + 31306*x^10 + 348001*x^8 + 2440720*x^6 + 10157856*x^4 + 22265600*x^2 + 19360000}} \\

18 & $772,\begin{smallmatrix}2&0\\0&2\end{smallmatrix}$ & \texttt{\detokenize{x^16 + 8*x^15 + 30*x^14 - 64*x^13 - 631*x^12 - 1228*x^11 + 13678*x^10 + 114232*x^9 + 566624*x^8 + 1848516*x^7 + 4722642*x^6 + 8592892*x^5 + 12273065*x^4 + 11005632*x^3 + 7972434*x^2 + 1807596*x + 3734829}} \\

\end{longtable}

\begin{longtable}{|c|c|M|}
\caption{Imaginary non-CM \(SD_{16}\) Fields with Class Number One}\label{tab:SD16}\\
\hline
\textbf{Entry} & $[d, \mathcal{F}_{L/N}]$ & \textbf{Minimal Polynomial of \(L\)} \\
\hline
\endfirsthead
\hline
\textbf{Entry} & $[d, \mathcal{F}_{L/N}]$ & \textbf{Minimal Polynomial of \(L\)} \\
\hline
\endhead
\hline
\endfoot
1 & $2,\begin{smallmatrix}8&0\\0&8\end{smallmatrix}$ & \texttt{\detokenize{x^16 + 8*x^15 + 40*x^14 + 144*x^13 + 388*x^12 + 800*x^11 + 1256*x^10 + 1424*x^9 + 1028*x^8 + 320*x^7 + 160*x^6 + 192*x^5 + 272*x^4 - 192*x^3 + 8}} \\

2 & $1,\begin{smallmatrix}12&6\\0&6\end{smallmatrix}$ & \texttt{\detokenize{x^16 - 6*x^12 + 39*x^8 + 18*x^4 + 9}} \\
&&\\
3 & $1,\begin{smallmatrix}12&0\\0&12\end{smallmatrix}$ & \texttt{\detokenize{x^16 + 6*x^12 + 39*x^8 - 18*x^4 + 9}} \\
&&\\
4 & $1,\begin{smallmatrix}44&22\\0&22\end{smallmatrix}$ & \texttt{\detokenize{x^16 + 8*x^15 + 32*x^14 + 172*x^13 + 590*x^12 + 1404*x^11 + 4504*x^10 + 6416*x^9 + 14463*x^8 + 12316*x^7 + 14216*x^6 - 6320*x^5 - 4810*x^4 + 11444*x^3 + 26560*x^2 + 53700*x + 46457}} \\

5 & $1,\begin{smallmatrix}44&0\\0&44\end{smallmatrix}$ & \texttt{\detokenize{x^16 + 8*x^15 + 32*x^14 - 4*x^13 - 290*x^12 - 532*x^11 + 1952*x^10 + 4480*x^9 - 9561*x^8 - 20244*x^7 + 34104*x^6 + 49384*x^5 - 84186*x^4 - 33172*x^3 + 89216*x^2 - 45828*x + 23797}} \\

6 & $1,\begin{smallmatrix}76&38\\0&38\end{smallmatrix}$ & \texttt{\detokenize{x^16 + 8*x^15 + 1248*x^14 + 8596*x^13 + 373758*x^12 + 2131164*x^11 - 2928456*x^10 - 33974032*x^9 + 185487231*x^8 + 968886428*x^7 + 2142328008*x^6 + 2870114736*x^5 + 9685520310*x^4 + 15797367700*x^3 + 25108770000*x^2 + 17515017500*x + 10615015625}} \\

7 & $1,\begin{smallmatrix}76&0\\0&76\end{smallmatrix}$ & \texttt{\detokenize{x^16 + 381444*x^12 - 108737874*x^8 + 13910659924*x^4 + 29241}} \\

8 & $1,\begin{smallmatrix}172&86\\0&86\end{smallmatrix}$ & \texttt{\detokenize{x^16 - 430*x^12 + 98943*x^8 - 2348230*x^4 + 154430329}} \\

9 & $1,\begin{smallmatrix}268&134\\0&134\end{smallmatrix}$ & \texttt{\detokenize{x^16 + 1340*x^14 + 1064764*x^12 - 701113728*x^10 + 96396428220*x^8 + 131041882464*x^6 + 74791695856*x^4 + 20324755520*x^2 + 2149620496}} \\

10 & $1,\begin{smallmatrix}268&0\\0&268\end{smallmatrix}$ & \texttt{\detokenize{x^16 + 2144*x^14 + 2131404*x^12 + 357179680*x^10 + 97101739900*x^8 - 66181506560*x^6 + 36551161776*x^4 - 8099017888*x^2 + 2149620496}} \\

11 & $1,\begin{smallmatrix}652&0\\0&652\end{smallmatrix}$ & \texttt{\detokenize{x^16 - 15648*x^14 + 2339226366*x^12 - 2754884260416*x^10 + 832729823747127*x^8 - 8522788163419680*x^6 + 22187629036095790*x^4 + 16158821566320*x^2 + 6045218001}} \\

\end{longtable}

\begin{longtable}{|c|c|M|}
\caption{Imaginary non-CM \(M_4(2)\) Fields with Class Number One}\label{tab:M4(2)}\\
\hline
\textbf{Entry} & $[d, \mathcal{F}_{L/N}]$ & \textbf{Minimal Polynomial of \(L\)} \\
\hline
\endfirsthead
\hline
\textbf{Entry} & $[d, \mathcal{F}_{L/N}]$ & \textbf{Minimal Polynomial of \(L\)} \\
\hline
\endhead
\hline
\endfoot
1 & $2,\begin{smallmatrix}16&0\\0&8\end{smallmatrix}$ & \texttt{\detokenize{x^16 + 8*x^12 + 32*x^8 - 8*x^4 + 1}} \\
\end{longtable}

\begin{longtable}{|c|c|M|}
\caption{Imaginary non-CM \(C_4\circ D_8\) Fields with Class Number One}\label{tab:C4OD8-ram}\\
\hline
\textbf{Entry} & $[d, \mathcal{F}_{L/N}]$ & \textbf{Minimal Polynomial of \(L\)} \\
\hline
\endfirsthead
\hline
\textbf{Entry} & $[d, \mathcal{F}_{L/N}]$ & \textbf{Minimal Polynomial of \(L\)} \\
\hline
\endhead
\hline
\endfoot

1 & $2,\begin{smallmatrix}24&0\\0&12\end{smallmatrix}$ & \texttt{\detokenize{x^16 + 8*x^15 + 20*x^14 + 32*x^13 + 198*x^12 + 792*x^11 + 1500*x^10 + 2264*x^9 + 5977*x^8 + 20736*x^7 + 54672*x^6 + 90648*x^5 + 101808*x^4 + 77544*x^3 + 40608*x^2 + 12960*x + 2025}} \\

2 & $1,\begin{smallmatrix}15&0\\0&15\end{smallmatrix}$ & \texttt{\detokenize{x^16 - 4*x^15 + 18*x^14 - 38*x^13 + 90*x^12 - 150*x^11 + 292*x^10 - 494*x^9 + 600*x^8 - 550*x^7 + 244*x^6 - 516*x^5 + 1953*x^4 - 2872*x^3 + 2076*x^2 - 770*x + 121}} \\

3 & $2,\begin{smallmatrix}88&0\\0&44\end{smallmatrix}$ & \texttt{\detokenize{x^16 + 8*x^15 + 60*x^14 + 232*x^13 + 870*x^12 + 2424*x^11 + 7140*x^10 + 16064*x^9 + 16537*x^8 + 32016*x^7 + 53040*x^6 - 13760*x^5 - 58952*x^4 + 183496*x^3 + 126808*x^2 - 333168*x + 1380017}} \\

4 & $-2,\begin{smallmatrix}88&0\\0&44\end{smallmatrix},[1,1]$ & \texttt{\detokenize{x^16 - 88*x^13 - 436*x^12 + 176*x^11 + 3872*x^10 + 20240*x^9 + 46800*x^8 - 19008*x^7 - 77440*x^6 - 159632*x^5 + 229816*x^4 + 594528*x^3 + 557568*x^2 + 361152*x + 116964}} \\

5 & $-2,\begin{smallmatrix}24&0\\0&12\end{smallmatrix},[1,1]$ & \texttt{\detokenize{x^16 - 24*x^13 - 20*x^12 + 48*x^11 + 288*x^10 + 336*x^9 - 240*x^8 - 1344*x^7 - 1152*x^6 + 1392*x^5 + 4984*x^4 + 6240*x^3 + 4608*x^2 + 2112*x + 484}} \\

6 & $10,\begin{smallmatrix}8&0\\0&4\end{smallmatrix}$ & \texttt{\detokenize{x^16 + 8*x^14 + 120*x^13 + 260*x^12 + 960*x^11 + 5592*x^10 + 17280*x^9 + 47780*x^8 + 123520*x^7 + 269040*x^6 + 534720*x^5 + 858528*x^4 + 1183680*x^3 + 1435936*x^2 + 1311680*x + 952976}} \\

\end{longtable}

\begin{longtable}{|c|c|M|}
\caption{Imaginary non-CM $C_2 \times D_8$ fields with class number one }\label{tab:C2D8}\\
\hline
\textbf{Entry} & \([d,\mathcal{F}_{L/N}]\) & \textbf{Minimal Polynomial of \(L\) } \\
\hline
\endfirsthead
\hline
\textbf{Entry} & \([d,\mathcal{F}_{L/N}]\) & \textbf{Minimal Polynomial of \(L\)} \\
\hline
\endhead
\hline
\endfoot
1 & $7,\begin{smallmatrix}15&0\\0&15\end{smallmatrix}$ & \texttt{\detokenize{x^16 - 2*x^15 + 17*x^14 - 44*x^13 + 196*x^12 - 406*x^11 + 1157*x^10 - 1696*x^9 + 3541*x^8 - 3372*x^7 + 5709*x^6 - 2742*x^5 + 5088*x^4 + 612*x^3 + 1881*x^2 + 2700*x + 711}} \\

2 & $1,\begin{smallmatrix}35&0\\0&35\end{smallmatrix}$ & \texttt{\detokenize{x^16 - 4*x^15 + 18*x^14 - 42*x^13 + 102*x^12 - 166*x^11 + 300*x^10 - 562*x^9 + 657*x^8 - 870*x^7 + 826*x^6 - 988*x^5 + 1624*x^4 - 1416*x^3 + 1384*x^2 - 432*x + 144}} \\

3 & $1,\begin{smallmatrix}70&0\\0&70\end{smallmatrix}$ & \texttt{\detokenize{x^16 - 4*x^15 - 2*x^14 + 34*x^13 - 12*x^12 - 110*x^11 + 110*x^10 - 162*x^9 + 457*x^8 + 410*x^7 - 576*x^6 - 2088*x^5 + 1006*x^4 + 1960*x^3 - 80*x^2 + 400*x + 2900}} \\

4 & $3,\begin{smallmatrix}187&0\\0&187\end{smallmatrix}$ & \texttt{\detokenize{x^16 - 4*x^15 + 584*x^14 - 2120*x^13 + 147940*x^12 - 441780*x^11 + 14253710*x^10 - 39202136*x^9 + 15505132*x^8 - 1374887416*x^7 - 95028942412*x^6 + 123155298084*x^5 + 5097833532049*x^4 - 1994249390620*x^3 - 103978917426100*x^2 - 5655161827376*x + 814053076246864}} \\

5 & $2,\begin{smallmatrix}20&0\\0&20\end{smallmatrix}$ & \texttt{\detokenize{x^16 - 8*x^15 + 32*x^14 - 80*x^13 + 220*x^12 - 656*x^11 + 1112*x^10 - 1344*x^9 + 5336*x^8 - 14672*x^7 + 3504*x^6 + 20448*x^5 + 55400*x^4 - 143680*x^3 - 69456*x^2 + 230400*x + 144484}} \\

6 & $2,\begin{smallmatrix}40&0\\0&20\end{smallmatrix}$ & \texttt{\detokenize{x^16 + 16*x^14 - 8*x^13 + 144*x^12 - 32*x^11 + 616*x^10 + 552*x^9 + 1450*x^8 + 3648*x^7 + 1776*x^6 + 9528*x^5 + 12656*x^4 + 8928*x^3 + 8264*x^2 - 6936*x + 17001}} \\

7 & $1,\begin{smallmatrix}21&0\\0&21\end{smallmatrix}$ & \texttt{\detokenize{x^16 - 4*x^15 + 6*x^14 - 14*x^13 + 12*x^12 + 6*x^11 - 20*x^10 + 160*x^9 + 1329*x^8 - 5698*x^7 + 9772*x^6 - 10104*x^5 + 18048*x^4 - 19966*x^3 + 11478*x^2 - 12050*x + 11701}} \\

8 & $2,\begin{smallmatrix}12&0\\0&12\end{smallmatrix}$ & \texttt{\detokenize{x^16 - 12*x^14 - 8*x^13 + 66*x^12 + 32*x^11 - 124*x^10 - 152*x^9 + 255*x^8 + 224*x^7 - 108*x^6 - 184*x^5 + 26*x^4 - 272*x^3 + 392*x^2 - 176*x + 121}} \\

9 & $2,\begin{smallmatrix}12&0\\0&12\end{smallmatrix}$ & \texttt{\detokenize{x^16 + 16*x^14 + 52*x^13 + 138*x^12 + 460*x^11 + 1220*x^10 + 2464*x^9 + 4317*x^8 + 8004*x^7 + 16624*x^6 + 25312*x^5 + 34164*x^4 + 85348*x^3 + 172516*x^2 + 176812*x + 76777}} \\

10 & $2,\begin{smallmatrix}24&0\\0&12\end{smallmatrix}$ & \texttt{\detokenize{x^16 + 8*x^13 + 16*x^11 + 32*x^10 + 8*x^9 + 270*x^8 + 352*x^7 + 192*x^6 + 376*x^5 + 464*x^4 + 80*x^3 + 32*x^2 + 56*x + 49}} \\

11 & $2,\begin{smallmatrix}24&0\\0&12\end{smallmatrix}$ & \texttt{\detokenize{x^16 - 8*x^14 - 8*x^13 + 40*x^12 + 80*x^11 - 88*x^10 - 104*x^9 - 330*x^8 - 32*x^7 + 2344*x^6 - 2872*x^5 - 440*x^4 + 4112*x^3 + 696*x^2 - 1432*x + 1129}} \\

12 & $2,\begin{smallmatrix}21&0\\0&21\end{smallmatrix}$ & \texttt{\detokenize{x^16 + 4*x^15 - 8*x^14 - 44*x^13 - 22*x^12 + 128*x^11 + 332*x^10 + 76*x^9 + 1075*x^8 + 4688*x^7 + 5892*x^6 - 4660*x^5 - 6638*x^4 + 14524*x^3 - 6476*x^2 - 12760*x + 21025}} \\

13 & $1,\begin{smallmatrix}91&0\\0&91\end{smallmatrix}$ & \texttt{\detokenize{x^16 - 4*x^15 + 34*x^14 - 98*x^13 + 450*x^12 - 1014*x^11 + 3240*x^10 - 6370*x^9 + 14421*x^8 - 24246*x^7 + 41646*x^6 - 56180*x^5 + 82696*x^4 - 73672*x^3 + 83800*x^2 - 25680*x + 7632}} \\

14 & $1,\begin{smallmatrix}182&0\\0&182\end{smallmatrix}$ & \texttt{\detokenize{x^16 - 8*x^15 + 36*x^14 - 100*x^13 + 310*x^12 - 1020*x^11 + 3460*x^10 - 12048*x^9 + 26927*x^8 - 53212*x^7 + 132252*x^6 - 286568*x^5 + 844334*x^4 - 1260540*x^3 + 1896800*x^2 - 1138308*x + 596709}} \\

15 & $1,\begin{smallmatrix}30&0\\0&30\end{smallmatrix}$ & \texttt{\detokenize{x^16 - 4*x^15 + 14*x^14 - 38*x^13 + 96*x^12 - 166*x^11 + 262*x^10 - 278*x^9 + 109*x^8 + 394*x^7 - 776*x^6 + 868*x^5 + 974*x^4 - 720*x^3 + 160*x^2 + 400*x + 100}} \\

16 & $1,\begin{smallmatrix}133&0\\0&133\end{smallmatrix}$ & \texttt{\detokenize{x^16 - 4*x^15 + 278*x^14 - 1078*x^13 + 33992*x^12 - 106778*x^11 + 2245824*x^10 - 4757456*x^9 + 84471697*x^8 - 61562290*x^7 + 1815736272*x^6 + 488471480*x^5 + 34581374688*x^4 + 8548605282*x^3 + 266628693434*x^2 + 496414211574*x + 1807103173257}} \\

17 & $2,\begin{smallmatrix}35&0\\0&35\end{smallmatrix}$ & \texttt{\detokenize{x^16 - 4*x^15 - 2*x^14 + 28*x^13 + 123*x^12 - 152*x^11 - 964*x^10 - 11276*x^9 + 30818*x^8 + 49732*x^7 - 281506*x^6 - 176284*x^5 + 3297743*x^4 - 2338056*x^3 - 5103848*x^2 + 3647692*x + 8229251}} \\

18 & $2,\begin{smallmatrix}44&0\\0&44\end{smallmatrix}$ & \texttt{\detokenize{x^16 - 20*x^14 - 24*x^13 + 186*x^12 + 224*x^11 - 540*x^10 - 1512*x^9 + 1727*x^8 + 1312*x^7 + 676*x^6 - 3336*x^5 + 6138*x^4 - 11920*x^3 + 5952*x^2 + 1040*x + 977}} \\

19 & $2,\begin{smallmatrix}44&0\\0&44\end{smallmatrix}$ & \texttt{\detokenize{x^16 - 16*x^14 - 44*x^13 + 506*x^12 + 1100*x^11 - 9092*x^10 - 18744*x^9 + 129245*x^8 + 253748*x^7 - 1455992*x^6 - 2139720*x^5 + 11890780*x^4 + 9137348*x^3 - 58647516*x^2 - 16467660*x + 133846025}} \\

20 & $2,\begin{smallmatrix}88&0\\0&44\end{smallmatrix}$ & \texttt{\detokenize{x^16 - 8*x^15 + 52*x^14 - 240*x^13 + 942*x^12 - 2912*x^11 + 7652*x^10 - 15008*x^9 + 21279*x^8 - 11760*x^7 - 17220*x^6 + 63424*x^5 - 57890*x^4 + 21384*x^3 + 59600*x^2 - 45792*x + 9153}} \\

21 & $2,\begin{smallmatrix}88&0\\0&44\end{smallmatrix}$ & \texttt{\detokenize{x^16 + 24*x^14 + 40*x^13 + 328*x^12 + 624*x^11 + 3144*x^10 + 4360*x^9 + 13590*x^8 + 12448*x^7 + 29064*x^6 + 24600*x^5 + 22568*x^4 + 4528*x^3 - 34152*x^2 - 7944*x + 11273}} \\

22 & $-2,\begin{smallmatrix}12&0\\0&12\end{smallmatrix},[1,1]$ & \texttt{\detokenize{x^16 - 8*x^15 + 128*x^13 - 156*x^12 - 816*x^11 + 1576*x^10 + 2144*x^9 - 5904*x^8 - 2672*x^7 + 12976*x^6 - 2304*x^5 - 11544*x^4 + 3904*x^3 + 8880*x^2 - 11008*x + 7396}} \\

23 & $11,\begin{smallmatrix}57&0\\0&57\end{smallmatrix}$ & \texttt{\detokenize{x^16 + 25*x^14 + 2*x^13 + 622*x^12 + 1092*x^11 + 11308*x^10 + 36138*x^9 - 5114*x^8 + 784966*x^7 + 1280446*x^6 + 3948956*x^5 + 17632955*x^4 + 9240014*x^3 + 52564033*x^2 + 18287840*x + 42724300}} \\

24 & $19,\begin{smallmatrix}33&0\\0&33\end{smallmatrix}$ & \texttt{\detokenize{x^16 - 6*x^15 - 36*x^14 + 210*x^13 + 1005*x^12 - 5760*x^11 - 11601*x^10 + 87804*x^9 + 97038*x^8 - 993978*x^7 - 37071*x^6 + 7368570*x^5 - 2016360*x^4 - 38211912*x^3 + 31159728*x^2 + 55142208*x + 81513216}} \\

25 & $-3,\begin{smallmatrix}22&0\\0&22\end{smallmatrix},[1,1]$ & \texttt{\detokenize{x^16 - 40*x^14 + 24*x^13 + 662*x^12 - 1104*x^11 - 4260*x^10 + 17256*x^9 - 16531*x^8 - 53664*x^7 + 245580*x^6 - 467088*x^5 + 520220*x^4 - 406464*x^3 + 276896*x^2 - 118848*x + 24784}} \\

26 & $-7,\begin{smallmatrix}6&0\\0&6\end{smallmatrix},[1,1]$ & \texttt{\detokenize{x^16 - 8*x^15 + 20*x^14 + 28*x^13 - 234*x^12 + 340*x^11 + 516*x^10 - 2312*x^9 + 2047*x^8 + 3620*x^7 - 4836*x^6 - 1552*x^5 + 8814*x^4 - 28*x^3 - 2368*x^2 - 268*x + 2437}} \\

27 & $-7,\begin{smallmatrix}38&0\\0&38\end{smallmatrix},[1,1]$ & \texttt{\detokenize{x^16 - 8*x^15 - 8*x^14 + 252*x^13 - 390*x^12 - 3932*x^11 + 15788*x^10 + 12736*x^9 - 202677*x^8 + 419796*x^7 + 319020*x^6 - 3379880*x^5 + 9104146*x^4 - 16928220*x^3 + 25174700*x^2 - 27076500*x + 15418125}} \\

28 & $-11,\begin{smallmatrix}3&0\\0&3\end{smallmatrix},[1,1]$ & \texttt{\detokenize{x^16 - 20*x^14 + 168*x^12 - 734*x^10 + 1720*x^8 - 1944*x^6 + 753*x^4 + 540*x^2 + 144}} \\

29 & $-19,\begin{smallmatrix}6&0\\0&6\end{smallmatrix},[1,1]$ & \texttt{\detokenize{x^16 - 8*x^15 + 4*x^14 + 188*x^13 - 562*x^12 - 1260*x^11 + 8476*x^10 - 1608*x^9 - 55305*x^8 + 70724*x^7 + 245220*x^6 - 268416*x^5 - 487002*x^4 + 965396*x^3 + 2146568*x^2 + 1468740*x + 486325}} \\

30 & $187,\begin{smallmatrix}3&0\\0&3\end{smallmatrix}$ & \texttt{\detokenize{x^16 - 4*x^15 + 458*x^14 - 1662*x^13 + 80594*x^12 - 244214*x^11 + 7086196*x^10 - 15936738*x^9 + 345173482*x^8 - 508464810*x^7 + 9794514790*x^6 - 7744837810*x^5 + 162394311395*x^4 - 41087085234*x^3 + 1469423072756*x^2 + 86244327520*x + 5739720792832}} \\

31 & $15,\begin{smallmatrix}7&0\\0&7\end{smallmatrix}$ & \texttt{\detokenize{x^16 - 8*x^15 + 94*x^14 - 518*x^13 + 3584*x^12 - 15134*x^11 + 71274*x^10 - 229034*x^9 + 769091*x^8 - 1849186*x^7 + 5157390*x^6 - 9815890*x^5 + 26507033*x^4 - 38426500*x^3 + 60449707*x^2 - 42621904*x + 32947861}} \\

32 & $10,\begin{smallmatrix}4&0\\0&4\end{smallmatrix}$ & \texttt{\detokenize{x^16 + 16*x^14 - 80*x^13 + 208*x^12 - 1120*x^11 + 3584*x^10 - 9920*x^9 + 25368*x^8 - 53120*x^7 + 107968*x^6 - 180160*x^5 + 261952*x^4 - 304000*x^3 + 304128*x^2 - 267520*x + 117136}} \\

33 & $10,\begin{smallmatrix}3&0\\0&3\end{smallmatrix}$ & \texttt{\detokenize{x^16 + 8*x^15 + 84*x^14 + 448*x^13 + 2588*x^12 + 10068*x^11 + 38914*x^10 + 112004*x^9 + 310599*x^8 + 664532*x^7 + 1323298*x^6 + 2021100*x^5 + 2669438*x^4 + 2568244*x^3 + 1741926*x^2 + 710444*x + 160579}} \\

34 & $22,\begin{smallmatrix}3&0\\0&3\end{smallmatrix}$ & \texttt{\detokenize{x^16 + 168*x^14 + 108*x^13 + 11652*x^12 + 14472*x^11 + 436230*x^10 + 768420*x^9 + 9659979*x^8 + 20547432*x^7 + 129951270*x^6 + 289379628*x^5 + 1026696114*x^4 + 2006608464*x^3 + 4004772678*x^2 + 5049493344*x + 3155036751}} \
\end{longtable}

\begin{longtable}{|c|c|M|}
\caption{Imaginary non-CM $C_2^2 \rtimes C_4$ fields with class number one}\label{tab:C22C4}\\
\hline
\textbf{Entry} & \([d,\mathcal{F}_{L/N}]\) & \textbf{Minimal Polynomial of \(L\)} \\
\hline
\endfirsthead
\hline
\textbf{Entry} & \([d,\mathcal{F}_{L/N}]\) & \textbf{Minimal Polynomial of \(L\) } \\
\hline
\endhead
\hline
\endfoot

1 & $1,\begin{smallmatrix}20&0\\0&20\end{smallmatrix}$ & \texttt{\detokenize{x^16 - 18*x^14 + 4*x^13 + 139*x^12 - 64*x^11 - 566*x^10 + 328*x^9 + 1152*x^8 - 888*x^7 - 1146*x^6 + 1612*x^5 + 2466*x^4 - 1100*x^2 - 420*x + 305}} \\

2 & $1,\begin{smallmatrix}52&0\\0&52\end{smallmatrix}$ & \texttt{\detokenize{x^16 - 10*x^14 + 60*x^13 + 211*x^12 + 96*x^11 + 1738*x^10 - 720*x^9 - 9208*x^8 + 36744*x^7 + 73174*x^6 - 16932*x^5 + 124074*x^4 + 283824*x^3 + 297756*x^2 + 564732*x + 613089}} \\

3 & $1,\begin{smallmatrix}148&0\\0&148\end{smallmatrix}$ & \texttt{\detokenize{x^16 - 66*x^14 + 4*x^13 + 1851*x^12 - 272*x^11 - 21190*x^10 + 3272*x^9 + 7256*x^8 - 23624*x^7 + 1589326*x^6 + 300948*x^5 + 8153482*x^4 + 1027872*x^3 + 14644628*x^2 + 204356*x + 8784697}} \\

4 & $3,\begin{smallmatrix}13&0\\0&13\end{smallmatrix}$ & \texttt{\detokenize{x^16 - 16*x^14 + 6*x^13 + 98*x^12 - 72*x^11 - 156*x^10 + 492*x^9 + 44*x^8 - 1086*x^7 + 1266*x^6 - 354*x^5 - 2167*x^4 + 5400*x^3 + 7010*x^2 + 1824*x + 139}} \\

5 & $2,\begin{smallmatrix}8&0\\0&4\end{smallmatrix}$ & \texttt{\detokenize{x^16 - 8*x^15 + 40*x^14 - 120*x^13 + 256*x^12 - 376*x^11 + 376*x^10 - 216*x^9 - 94*x^8 + 248*x^7 - 184*x^6 - 120*x^5 + 256*x^4 + 72*x^3 + 344*x^2 + 616*x + 257}} \\

6 & $1,\begin{smallmatrix}10&0\\0&10\end{smallmatrix}$ & \texttt{\detokenize{x^16 + 4*x^15 - 8*x^13 + 26*x^12 + 84*x^11 + 252*x^10 + 1088*x^9 + 2723*x^8 + 4852*x^7 + 8612*x^6 + 13416*x^5 + 15326*x^4 + 15008*x^3 + 13380*x^2 + 6056*x + 2621}} \\

\end{longtable}

\section{Full Tabulation for \(D_{32}\) Fields}
\label{appendix:D32}

This section provides the complete list of imaginary, non-CM \(D_{32}\)-fields with class number one for which the extension over the unique degree-$16$ normal subfield is ramified, or unramified. 

\subsection{Imaginary Non-CM \(D_{32}\) Fields Ramified over the \(D_{16}\) Subfield}

Let \(L/\mathbb{Q}\) denote an imaginary \(D_{32}\)-extension with class number one, and let \(N\) be its unique  imaginary quadratic subfield that the relative cyclic extension \(L/N\) is ramified at some finite places. For each admissible instance, the base field is of the form \(N=\mathbb{Q}(\sqrt{-d})\), and the ramification data is encoded by a nontrivial conductor \(\mathcal{F}_{L/N}\neq 1\).

\begin{longtable}{|c|c|M|}
\caption{Imaginary Non-CM \(D_{32}\) Fields with Class Number One which are Ramified over the \(N\).}\label{tab:D32}\\
\hline
\textbf{Entry} & \textbf{\([d, \mathcal{F}_{L/N}]\)} & \textbf{Minimal Polynomial of \(L\)} \\
\hline
\endfirsthead
\hline
\textbf{Entry} & \textbf{\([d, \mathcal{F}_{L/N}]\)} & \textbf{Minimal Polynomial of \(L\)} \\
\hline
\endhead
\hline
\endfoot

1 & $2,\begin{smallmatrix}16&0\\0&16\end{smallmatrix}$ & \texttt{\detokenize{x^32 - 64*x^29 + 128*x^28 + 192*x^27 + 304*x^26 - 2752*x^25 - 4204*x^24 + 28544*x^23 - 26464*x^22 - 77888*x^21 + 282896*x^20 - 227520*x^19 - 474352*x^18 + 1238592*x^17 - 1185890*x^16 + 461056*x^15 + 2209216*x^14 - 6405824*x^13 + 4902304*x^12 - 670912*x^11 - 84400*x^10 + 734144*x^9 + 178052*x^8 + 98176*x^7 + 179104*x^6 + 34624*x^5 + 14160*x^4 + 19648*x^3 + 4976*x^2 + 2240*x + 1225}} \\

2 & $3,\begin{smallmatrix}64&0\\0&64\end{smallmatrix}$ & \texttt{\detokenize{x^32 + 80*x^30 + 2616*x^28 + 44448*x^26 + 437556*x^24 + 2790432*x^22 + 14547072*x^20 + 58899840*x^18 + 98002674*x^16 - 324844768*x^14 + 903402496*x^12 - 238230528*x^10 + 39625848*x^8 - 21335616*x^6 + 2314656*x^4 + 186624*x^2 + 26244}} \\

3 & $3,\begin{smallmatrix}64&0\\0&64\end{smallmatrix}$ & \texttt{\detokenize{x^32 + 96*x^30 + 3408*x^28 + 50848*x^26 + 534936*x^24 + 2522880*x^22 + 7139328*x^20 - 8026560*x^18 + 6409548*x^16 - 8178368*x^14 + 8234592*x^12 - 4473408*x^10 + 2589520*x^8 - 1137408*x^6 + 358272*x^4 - 31104*x^2 + 26244}} \\

4 & $67,\begin{smallmatrix}64&0\\0&64\end{smallmatrix}$ & \texttt{\detokenize{x^32 - 4768*x^30 + 5726208*x^28 + 187486976*x^26 - 41675696*x^24 - 19152824064*x^22 + 1052772499328*x^20 - 17914209575552*x^18 + 104464325433456*x^16 - 36880547610368*x^14 + 122082854979584*x^12 - 55642824029184*x^10 + 41288727911296*x^8 - 11261408260096*x^6 + 2271287989248*x^4 - 119770921984*x^2 + 1929669184}} \\

5 & $67,\begin{smallmatrix}64&0\\0&64\end{smallmatrix}$ & \texttt{\detokenize{x^32 + 4768*x^30 + 5726208*x^28 - 187486976*x^26 - 41675696*x^24 + 19152824064*x^22 + 1052772499328*x^20 + 17914209575552*x^18 + 104464325433456*x^16 + 36880547610368*x^14 + 122082854979584*x^12 + 55642824029184*x^10 + 41288727911296*x^8 + 11261408260096*x^6 + 2271287989248*x^4 + 119770921984*x^2 + 1929669184}} \\
\end{longtable}

\subsection{Unramified Imaginary Non-CM \(D_{32}\) Fields with Class Number One}

A search over the admissible parameter ranges yields the complete set of positive integers \(d\) for which the imaginary quadratic field \(N=\mathbb{Q}(\sqrt{-d})\) admits an unramified non-CM \(D_{32}\)-extension with class number one.

\[
\{146,\ 407,\ 409,\ 471,\ 559,\ 1047,\ 1139,\ 1153,\ 1159,\ 1201,\ 1379,\ 1657,\ 1983,\ 2019,\ 2113,\ 2377,
\]
\[
2578,\ 2811,\ 2962,\ 3063,\ 3217,\ 3443,\ 3603,\ 4171,\ 4427,\ 4939,\ 4971,\ 7051,\ 7819,\ 8299,\ 8683,
\]
\[
9507,\ 10587,\ 10843,\ 12387,\ 12523,\ 13123,\ 14403,\ 15547,\ 16347,\ 16723,\ 25267,\ 28963\}.
\]

\subsection{Some Imaginary Non-CM \(D_{64}\) Fields with Class Number One with non-trivial conductor}

In the following, we show some examples of non-trivial conductor of $L$ over \(N=\mathbb{Q}(\sqrt{-d})\). $L$ is imaginary non-CM field of degree 64 whose Galois groups are isomorphic to \(D_{64}\) such that \(L/N\) is cyclic.

\begin{longtable}{|c|c|M|}
\caption{Some Imaginary Non-CM \(D_{64}\) Fields with Class Number One with non-trivial conductor}\label{tab:D64}\\
\hline
\textbf{Entry} & \textbf{\([d, \mathcal{F}_{L/N}]\)} & \textbf{Minimal Polynomial of \(L\)} \\
\hline
\endfirsthead
\hline
\textbf{Entry} & \textbf{\([d, \mathcal{F}_{L/N}]\)} & \textbf{Minimal Polynomial of \(L\)} \\
\hline
\endhead
\hline
\endfoot
1 & $3,\begin{smallmatrix}128&0\\0&128\end{smallmatrix}$ & \texttt{\detokenize{x^64 - 10048*x^62 + 36764352*x^60 + 10132635968*x^58 + 8524116196720*x^56 + 2572301697060480*x^54 + 475111196544725184*x^52 + 49410683469822985728*x^50 + 4885615514808520645752*x^48 + 237797468242438449035008*x^46 + 41548937936040721695390080*x^44 + 1664931529006929706967703936*x^42 + 27973629088656154656566222336*x^40 + 245757364452742920088561235200*x^38 + 1288521296294162909123477723136*x^36 + 4312163813667149435767738978560*x^34 + 10311603006684432815814637436232*x^32 + 22034513872332254833176883175424*x^30 + 51305759512601485315389394780672*x^28 + 96217691166959840868723003224576*x^26 + 103421823791796383932840785723648*x^24 + 26180536819250125850098682283008*x^22 + 12597723232713382833904214151424*x^20 - 3231698827014127387336033735680*x^18 + 3401339976915695548460389438272*x^16 + 1722306437171448778008198144*x^14 + 30317456547707694786730515456*x^12 - 1902949648841485149547226112*x^10 + 50301475399289431116778752*x^8 - 591021935762148205314048*x^6 + 2898072448355274236928*x^4 - 6501667338871492608*x^2 + 109766374410567744}} \\

2 & $2,\begin{smallmatrix}32&0\\0&32\end{smallmatrix}$ & \texttt{\detokenize{x^64 - 48*x^60 + 1464*x^56 + 4000*x^54 - 8064*x^52 - 68864*x^50 - 144276*x^48 + 241920*x^46 + 4303872*x^44 + 27400512*x^42 + 97404896*x^40 + 163825792*x^38 + 16123328*x^36 - 252761600*x^34 + 423388904*x^32 + 2335109376*x^30 + 1256262112*x^28 - 6097816192*x^26 - 8794957488*x^24 + 2369465536*x^22 + 9755030016*x^20 + 2506797568*x^18 - 1545668168*x^16 + 2873455360*x^14 + 4448815680*x^12 + 1550425216*x^10 - 1052636640*x^8 - 878537856*x^6 + 51323776*x^4 + 60236288*x^2 + 23059204}} \\

\end{longtable}

\appendix
\section{CM Fields with class number one of degrees 16 and 2-power degrees dihedral fields}
\label{AppendixA}

To place the extensive computational enumeration of degree \(16\) and higher \(2\)-power dihedral fields in this paper, we recall several definitive classification results for normal CM-fields of degree \(16\) with relative class number one. These theorems, drawn from the work of~\cite{lou7}, characterize all such fields for the specific Galois groups \(C_2 \times Q_8\), \(C_4 \circ D_8\) (denoted \(\mathbf{G}_9\) therein), and \(D_{16}\). 

\begin{theorem}\cite[Theorem 6, $C_2\times Q_8$ CM fields]{lou7}
Let $\mathbf{N}$ be a non-abelian normal CM-field of degree $16$ which is the compositum of two normal octic CM-fields $\mathbf{N}_1$ and $\mathbf{N}_2$ with the same maximal real subfield $\mathbf{K}$. Assume that one of the $\mathbf{N}_i$ is a quaternion octic CM-field. Then, $h_{\mathbf{N}}^- = 1$ if and only if
\[
\mathbf{N} = \mathbf{Q}\left(\sqrt{-1}, \sqrt{(2 + \sqrt{2})(3 + \sqrt{3})}\right).
\]
Moreover, this number field has class number one with Galois group isomorphic to
\[
C_2\times Q_{8}=\{a,b,c\;:\; a^2=b^4=1, c^2=b^2, ab=ba, ac=ca, cbc^{-1}=b^{-1}\}.
\]
\end{theorem}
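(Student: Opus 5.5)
This result is quoted from Louboutin--Okazaki \cite[Theorem 6]{lou7}, so the proof follows their route: reduce $h_{\mathbf N}^-=1$ to finitely many candidates, pin down the single survivor, and then check its full class number and Galois group.

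\textbf{Step 1 (group structure).} Since $\mathbf N=\mathbf N_1\mathbf N_2$ with $\mathbf N_1\cap\mathbf N_2\supseteq\mathbf K$, and both $\mathbf N_i$ are quadratic over $\mathbf K$, the group $\operatorname{Gal}(\mathbf N/\mathbf K)$ is $C_2\times C_2$. Complex conjugation is central, and one of the $\mathbf N_i$, say $\mathbf N_1$, has Galois group $Q_8$. Consequently $\mathbf K$ is a real bicyclic biquadratic field $\mathbf Q(\sqrt{m},\sqrt{n})$. The third CM subfield $\mathbf N_3$ over $\mathbf K$ is determined by $\mathbf N_1$ and $\mathbf N_2$. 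Writing $G=\operatorname{Gal}(\mathbf N/\mathbf Q)$ as an extension of $C_2^2$ by $C_2^2$ with $Q_8$ as a quotient, one checks that the only non-abelian option compatible with a central complex conjugation is $C_2\times Q_8$. Moreover, $\mathbf N_2$ is then either abelian or a second quaternion field.

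\textbf{Step 2 (relative class number reduction).} Apply the standard formula for a $C_2\times C_2$ extension of CM fields over the common real subfield:
\[
h^-_{\mathbf N}=\frac{Q_{\mathbf N}}{Q_{\mathbf N_1}Q_{\mathbf N_2}Q_{\mathbf N_3}}\cdot\frac{w_{\mathbf N}}{w_{\mathbf N_1}w_{\mathbf N_2}w_{\mathbf N_3}}\cdot h^-_{\mathbf N_1}h^-_{\mathbf N_2}h^-_{\mathbf N_3}\cdot(\text{power of }2).
\]
Requiring $h^-_{\mathbf N}=1$ then forces each $h^-_{\mathbf N_i}$ to be $1$ or a small power of $2$. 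Next, use the known classification of quaternion octic CM fields with small relative class number (Louboutin--Okazaki). This leaves finitely many possibilities for $\mathbf N_1$, and hence for $\mathbf K$, including $\mathbf K=\mathbf Q(\sqrt2,\sqrt3)$ with $\mathbf N_1=\mathbf Q(\sqrt{(2+\sqrt2)(3+\sqrt3)})$. Genus theory (parity of $h^-$ via the number of ramified primes in $\mathbf N/\mathbf K$) shows that the relative discriminant must be supported on very few primes. For a fixed $\mathbf K$, this bounds the possible $\mathbf N_2$ to a short list such as $\mathbf K(\sqrt{-1})$, $\mathbf K(\sqrt{-2})$, $\mathbf K(\sqrt{-3})$.

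\textbf{Step 3 (analytic elimination and verification).} For every remaining candidate, compute $h^-_{\mathbf N}$ exactly, either through products of $L$-values at $s=0$ (generalized Bernoulli numbers) or with PARI/GP. Only $\mathbf N=\mathbf Q(\sqrt{-1},\sqrt{(2+\sqrt2)(3+\sqrt3)})$ survives. For this field, the class number of the maximal real subfield is $1$, verified by Odlyzko/GRH-free discriminant bounds or a direct \texttt{bnfinit} certification. This gives $h_{\mathbf N}=1$. Finally, read off the Galois group $C_2\times Q_8$ with the stated presentation from Step 1.

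The main obstacle is the finiteness in Step 2. One needs an effective lower bound for $h^-$ of quaternion octic fields in terms of their conductor, a Brauer--Siegel type bound with explicit constants as in Louboutin's work. Without it, the candidate list is not finite. I would import that bound and check it against the remaining small-conductor cases.
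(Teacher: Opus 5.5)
The paper gives no proof of this statement; it only quotes \cite[Theorem~6]{lou7}. Your outline has the right general shape: a relative class number formula, imported lists of quaternion octic CM-fields, then a finite check. However, the central step rests on a false premise.

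\textbf{The third subfield is not CM, so the Step~2 formula is wrong.} In the $C_2\times C_2$-extension $\mathbf N/\mathbf K$, write $\mathbf N_1=\mathbf K(\sqrt{-\alpha})$ and $\mathbf N_2=\mathbf K(\sqrt{-\beta})$ with $\alpha,\beta\gg0$. The third intermediate field is $\mathbf K(\sqrt{\alpha\beta})=\mathbf N^+$, which is totally real. There is no $h^-_{\mathbf N_3}$. The correct identity comes from $\zeta_{\mathbf N}\zeta_{\mathbf K}^2=\zeta_{\mathbf N_1}\zeta_{\mathbf N_2}\zeta_{\mathbf N^+}$:
\[
h^-_{\mathbf N}=\frac{Q_{\mathbf N}w_{\mathbf N}}{Q_{\mathbf N_1}w_{\mathbf N_1}Q_{\mathbf N_2}w_{\mathbf N_2}}\,h^-_{\mathbf N_1}h^-_{\mathbf N_2}.
\]
The same confusion appears in your candidate field. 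Since $(2\pm\sqrt2)(3\pm\sqrt3)>0$, the field $\mathbf Q(\sqrt{(2+\sqrt2)(3+\sqrt3)})$ is totally real; it is $\mathbf N^+$, not a quaternion CM-field. The actual decomposition is $\mathbf N_1=\mathbf Q(\sqrt{-(2+\sqrt2)(3+\sqrt3)})$ and $\mathbf N_2=\mathbf K(\sqrt{-1})=\mathbf Q(\zeta_{24})$. For $\mathbf K=\mathbf Q(\sqrt2,\sqrt3)$, your list $\mathbf K(\sqrt{-1}),\mathbf K(\sqrt{-2}),\mathbf K(\sqrt{-3})$ is just this one field. Substituting $w_{\mathbf N}=w_{\mathbf N_2}=24$, $Q_{\mathbf N_2}=2$, $h^-_{\mathbf N_2}=1$ and $w_{\mathbf N_1}=2$ gives $h^-_{\mathbf N}=Q_{\mathbf N}h^-_{\mathbf N_1}/(4Q_{\mathbf N_1})$. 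Hence $h^-_{\mathbf N_1}$ is necessarily even. The list you import must therefore cover quaternion octic CM-fields of small even relative class number, not only those with $h^-=1$.

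\textbf{Step~1 is false.} The group $G$ is the fibre product $Q_8\times_{C_2^2}\operatorname{Gal}(\mathbf N_2/\mathbf Q)$. Here $\operatorname{Gal}(\mathbf N_2/\mathbf Q)$ can be $C_2^3$, $Q_8$, $C_2\times C_4$ or $D_8$. In each case complex conjugation can be central with biquadratic fixed field, e.g.\ $\mathbf Q(\zeta_5,\sqrt2)$ over $\mathbf Q(\sqrt2,\sqrt5)$. The fibre product is $C_2\times Q_8$ only in the first two cases. In the last two it surjects onto $C_2\times C_4$ or $D_8$, whereas the only order-$8$ quotients of $C_2\times Q_8$ are $C_2^3$ and $Q_8$. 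In those cases $G\cong C_4\rtimes C_4$, where every involution is central (Lemma~\ref{lem:CM abelian}).

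So the ``only if'' direction must also rule out these $C_4\rtimes C_4$-fields, which come from dihedral or $C_2\times C_4$ choices of $\mathbf N_2$. Your search (``abelian or quaternion'', then $\mathbf K(\sqrt{-m})$) never considers them. The group $C_2\times Q_8$ of the final field should be read off from $\mathbf N_2=\mathbf Q(\zeta_{24})$ being a $C_2^3$-field, not from Step~1.

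\textbf{Finiteness of the $\mathbf N_2$-search needs an argument.} For fixed $\mathbf K$ there are infinitely many normal octic CM-fields containing $\mathbf K$. The corrected formula bounds $h^-_{\mathbf N_1}h^-_{\mathbf N_2}$ by a small power of $2$. You then need this bound together with known lists or effective lower bounds for $h^-$ of abelian, dihedral and quaternion octic CM-fields. Genus theory on $\mathbf N/\mathbf K$, which is not even a quadratic extension, does not supply this.
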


\begin{theorem}\cite[Theorem 20, $C_4\circ D_4$ CM fields]{lou7}
The number field
\[
\mathbf{N}
=\Q\!\left(\sqrt{2},\,\sqrt{5},\,\sqrt{37},\,\sqrt{-(2\sqrt{2}+3\sqrt{5})(2+\sqrt{5})}\right)
\]
is the only normal CM-field of degree $16$ with Galois group $\mathbf{G}_9$ of relative class number one. Moreover, this field has class number one with Galois group isomorphic to 
\[
C_4\circ D_8=\{a,b,c\;:\; a^4=c^2=1, a^2=b^2, ab=ba, ac=ca, cbc=a^2b\}.
\]
\end{theorem}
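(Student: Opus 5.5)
The proof has two parts: the uniqueness statement, which I would take from \cite{lou7}, and the ``moreover'' assertions, which I would check directly. For the field itself I write $\mathbf{K}=\mathbb{Q}(\sqrt{2},\sqrt{5},\sqrt{37})$, $\beta=-(2\sqrt{2}+3\sqrt{5})(2+\sqrt{5})\in\mathbf{K}$ and $\mathbf{N}=\mathbf{K}(\sqrt{\beta})$.

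\emph{Uniqueness.} Let $\mathbf{N}$ be a normal CM-field of degree $16$ with group $\mathbf{G}_9$. Its maximal real subfield $\mathbf{N}^+$ is the fixed field of the central involution, i.e.\ of complex conjugation. Since $\mathbf{G}_9/Z(\mathbf{G}_9)\cong C_2^3$, $\mathbf{N}^+$ is a real triquadratic field. I would follow Louboutin's analytic route. First, factor the Dedekind zeta function of $\mathbf{N}$ via the irreducible characters of $\mathbf{G}_9$: the eight linear characters and the two faithful $2$-dimensional characters. This writes $h^-_{\mathbf{N}}$ as a product of Hecke $L$-values at $s=1$. Second, apply an explicit lower bound for $h^-_{\mathbf{N}}$ in terms of the root discriminant (Louboutin's residue bounds together with Odlyzko-type discriminant estimates). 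This forces $d_{\mathbf{N}}$ to be bounded, hence leaves only finitely many candidate triquadratic fields $\mathbf{N}^+$. Third, for each remaining $\mathbf{N}^+$, enumerate the quadratic extensions $\mathbf{N}^+(\sqrt{\gamma})$ whose normal closure has group $\mathbf{G}_9$, and compute $h^-$ for each; only the displayed field survives. This analytic reduction is the real content, and I would cite it from \cite[Theorem~20]{lou7} rather than redo it.

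\emph{Normality and the Galois group.} Next I would confirm that the displayed $\mathbf{N}$ is Galois over $\mathbb{Q}$ with group $C_4\circ D_8$.
\begin{itemize}
\item \emph{Normality.} For each $\sigma\in\operatorname{Gal}(\mathbf{K}/\mathbb{Q})\cong C_2^3$, check that $\sigma(\beta)/\beta$ is a square in $\mathbf{K}$. This is an explicit norm-type computation using $(2+\sqrt{5})(2-\sqrt{5})=-1$ and the factorization of $8-45=-37$.
\item \emph{CM property.} $\beta$ is totally negative in $\mathbf{K}$, so $\mathbf{N}$ is a totally imaginary quadratic extension of the totally real $\mathbf{K}$.
\item \emph{Identifying the extension class.} Determine the lifts $\tilde\sigma$ to $\mathbf{N}$ by $\tilde\sigma(\sqrt\beta)=\sqrt\beta\cdot\gamma_\sigma$, where $\gamma_\sigma^2=\sigma(\beta)/\beta$. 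Compute which lifts have order $4$ via $\gamma_\sigma\,\sigma(\gamma_\sigma)=\pm1$. This gives the cohomology class of the central extension $1\to C_2\to G\to C_2^3\to 1$.
\item \emph{Matching $C_4\circ D_8$.} Exhibit elements satisfying the relations $a^4=c^2=1$, $a^2=b^2$, $ab=ba$, $ac=ca$, $cbc=a^2b$. As an alternative, check with \texttt{galoisinit}/\texttt{galoisidentify} in PARI/GP \cite{Pari/gp}.
\end{itemize}

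\emph{Class number.} Finally I would show $h_{\mathbf{N}}=h^-_{\mathbf{N}}h^+_{\mathbf{N}}=1$. Since $h^-_{\mathbf{N}}=1$ by the first part, it remains to prove $h_{\mathbf{K}}=1$. I would apply Kuroda's class number formula (multiquadratic version) to $\mathbf{K}$: the seven quadratic subfields $\mathbb{Q}(\sqrt{m})$, $m\in\{2,5,37,10,74,185,370\}$, all have small class numbers, and the unit index can be computed. This yields $h_{\mathbf{K}}$. If the unit index is awkward to pin down by hand, I would certify $h_{\mathbf{K}}=1$ by a \texttt{bnfinit}/\texttt{bnfcertify} computation.

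I expect the main obstacle to be the uniqueness part, specifically making the analytic lower bound for $h^-$ effective enough to cut the list of triquadratic $\mathbf{N}^+$ to a feasible size; this is why I rely on \cite{lou7} for it. The verifications in the last two parts are routine but careful computations.
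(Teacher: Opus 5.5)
Your proposal agrees with the paper, which gives no argument of its own and simply cites \cite[Theorem~20]{lou7}. Like the paper, you take uniqueness and $h^-_{\mathbf{N}}=1$ from that source, and your added checks are sound supplements: normality via $\sigma(\beta)/\beta\in\mathbf{K}^{*2}$, the order-$4$ lifts identifying $C_4\circ D_8$, and $h_{\mathbf N}=h^-_{\mathbf N}h_{\mathbf K}$ with $h_{\mathbf K}=1$.

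One slip: $Z(C_4\circ D_8)\cong C_4$, so $\mathbf{G}_9/Z(\mathbf{G}_9)\cong C_2^2$, not $C_2^3$. What you need is that complex conjugation is the unique central involution $a^2$, which generates $G'$, so $G/\langle a^2\rangle\cong C_2^3$ and $\mathbf{N}^+$ is still triquadratic.
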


\begin{theorem}\cite[Theorem 10 (b)]{lou7}
There are five dihedral CM-fields of degree $16$ with relative class numbers equal to one. Namely, the narrow Hilbert $2$-class fields $H_{n,2}(L)$ of the five real quadratic number fields $\Q(\sqrt{pq})$ with
\[
pq \in \{2\cdot 257,\ 5\cdot 101,\ 5\cdot 181,\ 13\cdot 53,\ 13\cdot 61\}.
\]
Finally, the narrow Hilbert $2$-class field of $L=\Q(\sqrt{2\cdot 257})$ has class number $3$, and the four remaining narrow Hilbert $2$-class fields have class number one.
\end{theorem}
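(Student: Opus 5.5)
The plan is to follow Louboutin's strategy for dihedral CM fields, which goes through the relative class number $h^-_{\mathbf N}=h_{\mathbf N}/h_{\mathbf N^+}$ instead of through conductors over an imaginary quadratic field. Let $\mathbf N$ be a normal CM-field with $\operatorname{Gal}(\mathbf N/\mathbb Q)\cong D_{16}=\langle a,b\rangle$. By the definition of a CM-field, complex conjugation is central, so it equals $a^4$. Hence $\mathbf N^+=\mathbf N^{\langle a^4\rangle}$ is a totally real $D_8$-field, and the cyclic octic extension $\mathbf N/k$ with $k=\mathbf N^{\langle a\rangle}$ has $k$ real quadratic. This is the mirror image of the non-CM situation of Proposition~\ref{prop:cyclic_quadratic}: there the base $N$ had to be imaginary, whereas here $k$ must be real because $a^4\in\langle a\rangle$ acts as complex conjugation.

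\textbf{Step 1 (parity).} First we show that $h^-_{\mathbf N}=1$ forces $h^-_{\mathbf N}$ odd and use a genus-theory argument in the spirit of Proposition~\ref{prop:no_new_odd}. The goal is that $\mathbf N/k$ is unramified at all finite primes; it may be ramified at the real places, which is why narrow class fields appear. For this we use the relative version of Lemma~\ref{JMasley}, namely $h^-_{\mathbf N_4}\mid h^-_{\mathbf N}$ up to a power of $2$ for the octic CM subfield $\mathbf N_4=\mathbf N^{\langle a^2\rangle}$ wait~-- more precisely for the quartic and octic CM subfields along the tower, together with the count of ramified primes in genus theory: each additional ramified prime in the tower contributes a factor $2$ to $h^-$. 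Once $\mathbf N/k$ is unramified at finite primes, class field theory identifies $\mathbf N$ with the narrow Hilbert $2$-class field of $k$. This requires the narrow $2$-class group of $k$ to be cyclic of order $8$, and the parity condition then restricts the discriminant of $k$ to $pq$ with two prime factors, in suitable residue classes modulo $8$ or $16$.

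\textbf{Step 2 (analytic bound).} This is the main obstacle. We write
\[
h^-_{\mathbf N}=\frac{Q_{\mathbf N}w_{\mathbf N}}{(2\pi)^{8}}\sqrt{d_{\mathbf N}/d_{\mathbf N^+}}\prod_{\chi\ \mathrm{odd}}L(1,\chi)
\]
and factor the $L$-functions through the Artin characters of $D_{16}$. The odd part consists of the two-dimensional odd characters induced from faithful characters of $\operatorname{Gal}(\mathbf N/k)$. We then apply Louboutin's explicit lower bounds for $L(1,\chi)$, or for residues of Dedekind zeta functions, valid in the absence of an exceptional zero. A possible Siegel zero is handled by the usual trick of showing that it would have to come from a quadratic subfield, where it is controlled separately. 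This yields an effective bound $d_k\le B$, and making $B$ small enough to enumerate is the delicate part.

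\textbf{Step 3 (enumeration).} We run through all $k=\mathbb Q(\sqrt{pq})$ with $pq\le B$ whose narrow $2$-class group is cyclic of order $8$. For each such $k$ we build $H_{n,2}(k)$ by class field theory in PARI/GP and compute $h^-$ exactly. The survivors should be exactly $pq\in\{2\cdot257,\,5\cdot101,\,5\cdot181,\,13\cdot53,\,13\cdot61\}$. Finally, we compute $h_{\mathbf N^+}$ for these five fields, for instance via Minkowski or Odlyzko bounds combined with a Brauer-type relation for the $D_8$-field $\mathbf N^+$. This should give $h_{\mathbf N}=3$ for $pq=2\cdot257$ and $h_{\mathbf N}=1$ for the other four.
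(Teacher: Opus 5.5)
There is a genuine gap, and it sits in Step~1, which carries the whole reduction to narrow Hilbert $2$-class fields. The paper gives no proof of this statement: it is quoted from Louboutin--Okazaki, so there is no in-paper argument to compare with. I have no objection to your Steps~2--3 as a sketch of the standard analytic-plus-computational route.

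The subfields you invoke in Step~1 are not CM. Every nontrivial subgroup of $\langle a\rangle$ contains $a^4$, so $\mathbf N^{\langle a^2\rangle}$ is a real biquadratic field and every field strictly between $k$ and $\mathbf N$ is totally real. In fact all quadratic and quartic subfields of $\mathbf N$ are real. The only proper CM subfields are the non-normal octic fields $\mathbf N^{\langle a^ib\rangle}$; for $K=\mathbf N^{\langle b\rangle}$ one has $\zeta_{\mathbf N}/\zeta_{\mathbf N^+}=(\zeta_K/\zeta_{K^+})^2$. Moreover, a divisibility ``up to a power of $2$'' carries no parity information at all.

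What genus theory for $\mathbf N/\mathbf N^+$ actually gives (ambiguous class formula, all real places ramified) is this: the $2$-rank of the relative class group is at least $t-1$, where $t$ is the number of primes of $\mathbf N^+$ ramified in $\mathbf N$. Since every nontrivial subgroup of $\langle a\rangle$ contains $a^4$, a finite prime ramifies in $\mathbf N/k$ if and only if it ramifies in $\mathbf N/\mathbf N^+$. So $h^-_{\mathbf N}$ odd yields only $t\le 1$. It does not yield that $\mathbf N/k$ is unramified at all finite primes.

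The case $t=1$ is a real configuration that nothing in your outline excludes. The set of primes of $\mathbf N^+$ ramified in $\mathbf N$ is $\operatorname{Gal}(\mathbf N^+/\mathbb Q)$-stable, so the ramified prime is the only prime of $\mathbf N^+$ above some $p$. Hence the decomposition group of $p$ surjects onto $\operatorname{Gal}(\mathbf N^+/\mathbb Q)\cong D_8$ and contains $a^4$, so it is all of $D_{16}$.

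For odd $p$, tameness (cyclic normal inertia with cyclic quotient) forces $I_p=\langle a\rangle$. The Frobenius is then a reflection, so $a^p=a^{-1}$, i.e.\ $p\equiv -1\pmod 8$, and $p$ is inert in $k$ and totally ramified in $\mathbf N/k$. For $p=2$ one only learns that $2$ has a unique prime in $\mathbf N^+$.

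Because Step~3 runs only over narrow Hilbert $2$-class fields, any such field with $h^-_{\mathbf N}=1$ would simply be missed. You need one of two things:
\begin{itemize}
\item an additional argument showing these configurations have $h^-_{\mathbf N}\ne 1$; or
\item to carry them through the analytic bound and the search, with the corresponding $p$-power or $2$-power conductor.
\end{itemize}

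A smaller point: in Step~3 you must also impose $N_{k/\mathbb Q}(\varepsilon_k)=+1$, i.e.\ that the class of $(\sqrt{pq})$ be nontrivial in the narrow class group; otherwise $H_{n,2}(k)$ is totally real. Granted this, complex conjugation is the image of that class, and that is what forces the narrow $2$-class group to be exactly $C_8$ rather than $C_{2^r}$ with $r\ge 4$.
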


\section{List of all imaginary non-CM $D_8$ fields, imaginary $C_2\times C_4$, and $C_2^3$ fields with class number one}\label{sec:deg_8}

\begin{theorem}[\cite{Yamamura1}]\label{thm:c2xc4_base}
The complete list of imaginary fields with Galois group \(C_2 \times C_4\) and class number one is:
\[
\begin{aligned}
&\mathbb{Q}(\zeta_{16}),\quad
\mathbb{Q}(\sqrt{-3}, \cos(\pi/8)),\quad
\mathbb{Q}(\zeta_{15}),\quad
\mathbb{Q}(\zeta_{20}),\\
&\mathbb{Q}(\sqrt{-7}, \zeta_5),\quad
\mathbb{Q}(\sqrt{-2}, \zeta_5),\quad
\mathbb{Q}(\sqrt{-3}, i\sin(\pi/8)),\quad
\mathbb{Q}(\sqrt{-11}, i\sin(\pi/8)),\\
&\mathbb{Q}(\zeta_5, \sqrt{2}),\quad
\mathbb{Q}(\zeta_5, \sqrt{13}),\quad
\mathbb{Q}(i\sin(\pi/8), \sqrt{5}),\quad
\mathbb{Q}(\zeta_5, \sqrt{17}).
\end{aligned}
\]
\end{theorem}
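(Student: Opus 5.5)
The statement is Theorem~\ref{thm:c2xc4_base}, Yamamura's list of imaginary $C_2\times C_4$ fields with class number one. I would reconstruct it in three steps: enumerate the possible fields by conductor, bound the conductor, then verify class numbers.

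\textbf{Step 1: structure of the fields.} Such a field $M$ is abelian, so by Kronecker--Weber it is a subfield of some $\mathbb{Q}(\zeta_f)$, where $f$ is the conductor. It corresponds to an even-order character group $X\cong C_2\times C_4$. Since $M$ is imaginary, $X$ contains odd characters, and by Lemma~\ref{lem:CM abelian} $M$ is CM with totally real subfield $M^+$ of degree $4$. The class number hypothesis splits as $h_M=h_M^-h_{M^+}$, so we need $h_M^-=1$ and $h_{M^+}=1$.

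\textbf{Step 2: finite conductor bound.} This is the main obstacle. I would use the analytic formula
\[
h_M^-=\frac{Q_M w_M}{(2\pi)^4}\sqrt{\frac{d_M}{d_{M^+}}}\prod_{\chi\ \mathrm{odd}}L(1,\chi).
\]
I would bound the product $\prod L(1,\chi)$ from below using Louboutin's estimates for the residue of $\zeta_M$, together with Stark's treatment of a possible exceptional zero. This gives $h_M^->1$ once $f$ exceeds an explicit constant. To keep the constant manageable, I would first reduce via genus theory and 2-rank arguments. Oddness of $h_M$ forces few ramified primes: at most three, in the shapes seen in the list, namely $2$, $3$, $5$, $7$, $11$, $13$, $17$. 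Concretely, I would argue as in Proposition~\ref{prop:no_new_odd}: adding a quadratic character unramified in a subfield produces unramified quadratic extensions. This gives the relevant parity of the number of ramified primes.

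\textbf{Step 3: finite enumeration.} For each admissible $f$ below the bound, I would list the $C_2\times C_4$ subgroups of $(\mathbb{Z}/f)^\times$ characters that contain an odd character. For each one, I would compute $h_M^-$ via generalized Bernoulli numbers, $h_M^-=Q_Mw_M\prod_{\chi\ \mathrm{odd}}(-\tfrac12 B_{1,\chi})$. For the survivors I would then check $h_{M^+}=1$ in PARI/GP, using \texttt{bnfinit} under GRH and certifying the result afterwards. This should leave exactly the twelve listed fields. I would double-check these against the subfields of $\mathbb{Q}(\zeta_{16})$, $\mathbb{Q}(\zeta_{15})$, $\mathbb{Q}(\zeta_{20})$, $\mathbb{Q}(\zeta_{40})$, $\mathbb{Q}(\zeta_{48})$, $\mathbb{Q}(\zeta_{35})$, $\mathbb{Q}(\zeta_{65})$, $\mathbb{Q}(\zeta_{85})$, $\mathbb{Q}(\zeta_{88})$, and so on.

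The hard part is Step 2, making the analytic lower bound effective with a small enough conductor cutoff. My first approach would be Louboutin's bound $\mathrm{Res}_{s=1}\zeta_{M}\ge (1-\frac{8\pi e^{1/8}}{d_M^{1/8}})/(\ldots\log d_M)$, applied to the imaginary subfields, with any Siegel-zero issues handled by passing to quadratic subfields.
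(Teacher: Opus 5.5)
The paper gives no proof of Theorem~\ref{thm:c2xc4_base}; it imports the result from Yamamura's classification. Your outline (genus theory, an effective lower bound for $h_M^-$, then computation of $h_M^-$ and $h_{M^+}$) therefore reconstructs the cited work, and its overall shape is right. The reduction step, however, is misstated. The argument of Proposition~\ref{prop:no_new_odd} is only a special case of what you need, and the conclusion is not a ``parity'' statement. The correct tool is genus theory for abelian fields.

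Let $X\cong C_2\times C_4$ be the character group of $M$ and $X_p$ the group of $p$-components of its characters. The field attached to $\prod_p X_p$ is unramified over $M$ (also at infinity, since $M$ is totally imaginary) and has $2$-power degree over $M$. Hence $h_M$ odd forces $X=\prod_p X_p$. Since $X_p$ is cyclic for odd $p$, there are two possibilities:
\begin{itemize}
\item $X=X_2$, and then $M=\mathbb{Q}(\zeta_{16})$;
\item $X=X_p\times X_q$ with one factor of order $4$ and the other of order $2$. Then $M=K\cdot k_q$, where $K$ is cyclic quartic of conductor $16$ or of prime conductor $p\equiv 1\pmod 4$, and $k_q$ is quadratic and ramified at a single other prime.
\end{itemize}
Three ramified primes would give $C_2^3$. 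So at most two primes ramify, not three. Nothing at this stage says \emph{which} primes occur; listing $2,3,5,7,11,13,17$ here is reading off the answer.

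Step~2 is where the proposal is still only a plan, and as designed it does not lead to a feasible Step~3. Since $\prod_{\chi\ \mathrm{odd}}L(1,\chi)=\kappa_M/\kappa_{M^+}$ (the residues of $\zeta_M,\zeta_{M^+}$ at $s=1$), you need an upper bound for $\kappa_{M^+}$ as well as a lower bound for $\kappa_M$. The resulting degree-$8$ cutoff, after Stark's treatment of exceptional zeros, is effective but very large.

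To see what is at stake, group the odd characters by the imaginary subfields they cut out. For $M=\mathbb{Q}(\zeta_5,\sqrt{-q})$ with $q\equiv 3\pmod 4$ prime and $q\ge 7$, this gives
\[
h_M^-=Q_M\,h_{\mathbb{Q}(\sqrt{-q})}\,h_{\mathbb{Q}(\sqrt{-5q})}/4 .
\]
So your Step~3 would be re-solving imaginary quadratic class number problems by computing $B_{1,\chi}$ for every conductor up to that cutoff.

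The missing idea that makes the search finite and short is algebraic. Because $M$ is totally imaginary, $M\cdot H^{+}(k)/M$ is unramified for every subfield $k\subseteq M$, where $H^{+}$ is the narrow Hilbert class field. Hence $h_M=1$ forces $H^{+}(k)\subseteq M$. For $M=K\cdot k_q$ this has three consequences:
\begin{itemize}
\item $h_K=1$, since $M/K$ is ramified above $q$ (cf.\ Lemma~\ref{JMasley});
\item the quadratic subfield of $M$ ramified at both primes has (narrow) class group $C_2$;
\item the other cyclic quartic subfield $K'\neq K$ has the same ramification indices as $M$, so $M=\Hil(K')$ and $h_{K'}=2$ whenever $K'$ is imaginary.
\end{itemize}
Combined with the known finite lists of imaginary quadratic and imaginary cyclic quartic fields of class number $1$ or $2$, this leaves a short explicit list of candidates. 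Only for those do you then compute $h_M^-$ and certify $h_{M^+}$.
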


\begin{theorem}[\cite{Yamamura1}]\label{thm:c2cubed_base}
The complete list of imaginary fields with Galois group \(C_2^3\) and class number one is:
\[
\begin{aligned}
&\mathbb{Q}(\zeta_8,\sqrt{5}),\quad
\mathbb{Q}(\zeta_{24}),\quad
\mathbb{Q}(\zeta_{12},\sqrt{5}),\quad
\mathbb{Q}(\zeta_8,\sqrt{-11}),\\
&\mathbb{Q}(\sqrt{-3},\sqrt{5},\sqrt{-7}),\quad
\mathbb{Q}(\sqrt{-3},\sqrt{5},\sqrt{-2}),\quad
\mathbb{Q}(\sqrt{-1},\sqrt{5},\sqrt{-7}),\\
&\mathbb{Q}(\sqrt{-3},\sqrt{2},\sqrt{-11}),\quad
\mathbb{Q}(\sqrt{-7},\sqrt{5},\sqrt{-2}),\quad
\mathbb{Q}(\sqrt{-1},\sqrt{13},\sqrt{-7}),\\
&\mathbb{Q}(\sqrt{-3},\sqrt{17},\sqrt{-11}),\quad
\mathbb{Q}(\zeta_{12},\sqrt{-7}),\quad
\mathbb{Q}(\zeta_{12},\sqrt{-11}),\\
&\mathbb{Q}(\sqrt{-3},\sqrt{-7},\sqrt{-2}),\quad
\mathbb{Q}(\zeta_{12},\sqrt{-19}),\quad
\mathbb{Q}(\sqrt{-1},\sqrt{-7},\sqrt{-19}),\\
&\mathbb{Q}(\sqrt{-3},\sqrt{-11},\sqrt{-19}).
\end{aligned}
\]
\end{theorem}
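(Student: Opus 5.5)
Since $C_2^3$ is abelian, Lemma~\ref{lem:CM abelian} shows that every imaginary $C_2^3$-field $K$ is CM: it is a compositum $K=K^+(\sqrt{-m})$ with $K^+$ a real biquadratic field. So $h_K=1$ forces $h_K^-=1$ and $h_{K^+}=1$. I would combine three ingredients: a Kuroda-type class number formula, genus theory, and the finite lists of imaginary quadratic fields with small $2$-power class number.

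\textbf{Step 1 (Kuroda's formula).} Write $k_1,\dots,k_7$ for the seven quadratic subfields of $K$: four are imaginary and three are real. Kuroda's class number formula for a multiquadratic field gives
\[
h_K=\frac{q(K)}{2^{v}}\prod_{i=1}^{7}h_{k_i},
\]
where $v$ is an explicit small integer and $q(K)$ is the unit index $[\mathcal O_K^\times:\prod_i\mathcal O_{k_i}^\times]$, a power of $2$ with bounded exponent. Setting $h_K=1$ gives $\prod_i h_{k_i}\mid 2^{v}$. In particular, each of the four imaginary quadratic subfields has class number a power of $2$ bounded by $2^{v}$.

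\textbf{Step 2 (few ramified primes).} Next I would bound the number $t$ of rational primes ramified in $K$. Genus theory applied to $K/k_i$ for an imaginary $k_i$ shows that too many ramified primes would make the $2$-rank of $\Cl(K)$ positive, contradicting $h_K=1$. The same conclusion follows, quadratic field by quadratic field, from the requirement that $h_{k_i}$ divide a small power of $2$: an imaginary quadratic field with $r$ ramified primes has $2$-rank $r-1$. I expect this to give $t\le 4$ and to force each imaginary $k_i$ to have class number $1$, $2$, or $4$.

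\textbf{Step 3 (finite enumeration).} By Watkins' tables~\cite{Watkins}, the candidate imaginary quadratic fields $k_i=\mathbb{Q}(\sqrt{-d})$ form an explicit finite list. The field $K$ is generated by any three independent quadratic subfields, so I would choose these generators among the imaginary ones, of which there are four. This yields a finite list of triples to test. For each candidate triquadratic field $K$, I would compute $h_K$ directly using \texttt{bnfinit} in PARI/GP~\cite{Pari/gp}, and retain exactly those with $h_K=1$. The result should be the seventeen fields listed in Theorem~\ref{thm:c2cubed_base}, which agrees with Yamamura's list~\cite{Yamamura1}.

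\textbf{Main obstacle.} The delicate point is controlling the unit index $q(K)$ and the exponent $v$ precisely enough that Step 1 actually yields a usable bound on $\prod_i h_{k_i}$. If this fails, I would instead bound $h_K^-$ from below analytically: Louboutin's lower bounds for relative class numbers of CM fields give $h_K^-\to\infty$ as $d_K\to\infty$. This would give an explicit discriminant bound, and the finite computation of Step 3 would then be run below that bound.
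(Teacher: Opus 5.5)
The paper gives no argument for this statement. It is imported from Yamamura's determination of all imaginary abelian number fields with class number one, as the $C_2\times C_4$ list is. Your proposal is therefore a genuinely different, self-contained route for the $C_2^3$ case:
\begin{itemize}
\item class-number information descends to the seven quadratic subfields, since $K$ is multiquadratic;
\item the four imaginary ones are then confined to Watkins' finite lists;
\item any three of them generate $K$, because a product of three imaginary square classes is imaginary, hence nontrivial, so they are independent;
\item a finite computation finishes.
\end{itemize}
The citation buys brevity and a single source covering every abelian Galois group at once. Your route buys a transparent finiteness argument for this one group that needs no analytic lower bound for $h^-$.

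Several points should be tightened.

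\textbf{The ``main obstacle'' is not one.} For imaginary $(2,2,2)$-fields Kuroda's formula holds with $v=5$. Since $q(K)$ is a positive integer, $h_K=1$ gives $q(K)\prod_{i=1}^{7}h_{k_i}=2^5$ with no knowledge of $q(K)$ required. The analytic fallback is therefore unnecessary.

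\textbf{Step~2 can be proved, not just expected.} Since $K$ is totally imaginary, $K\cdot\Hil(k)/K$ is an unramified abelian extension for every subfield $k\subseteq K$. So $h_K=1$ forces $\Hil(k)\subseteq K$; equivalently, Lemma~\ref{JMasley} gives $h_k\mid [K:k]\,h_K=4$. Thus every quadratic subfield has class number $1$, $2$ or $4$, and this alone yields the finite candidate list without Kuroda's formula. By the same reasoning $K$ must coincide with its genus field, so it is generated by three quadratic fields of prime discriminant ($-4$, $\pm 8$, $p^*$). This gives $t\le 3$, sharper than your $t\le 4$. Moreover, an imaginary quadratic field of prime discriminant has odd class number, so every negative prime discriminant involved is one of the nine class-number-one discriminants. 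That makes the search tiny.

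\textbf{The computation is GRH-conditional as written.} \texttt{bnfinit} computes class groups under GRH. For an unconditional list, run \texttt{bnfcertify}, or compute $h_K=h_K^-h_{K^+}$ exactly. Use $h_K^-=Q_Kw_K\prod h_{k}/w_{k}$, with the product over the four imaginary quadratic subfields, and Kuroda's formula for the real biquadratic field $K^+$.
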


\begin{theorem}[\cite{Y}]\label{thm:d8_base}
The complete list of imaginary non-CM fields with Galois group \(D_8\) and class number one is given in Table~\ref{tab:D8}. In the table, each entry provides the base field \(N=\mathbb{Q}(\sqrt{-d})\) (encoded by the pair \([d,\mathcal{F}_{L/N}]\)) alongside the defining polynomial of \(L\).
\end{theorem}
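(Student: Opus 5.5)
The statement is the classification theorem of \cite{Y}: the imaginary non-CM $D_8$ fields with $h=1$ are exactly those in Table~\ref{tab:D8}. I would reprove it with the same pipeline as for degree $16$, one level down. Let $L$ be such a field and $G=\operatorname{Gal}(L/\mathbb{Q})=\langle a,b\mid a^4=b^2=1,\ bab=a^{-1}\rangle$.

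\emph{Reduction to a quadratic base.} Complex conjugation $\tau$ is non-central, so $\tau\notin\langle a\rangle$. Hence $N=L^{\langle a\rangle}$ is imaginary quadratic, since a real $N$ would force $\tau\in\langle a\rangle$, exactly as in the proof of Theorem~\ref{thm:D16-unram}. Moreover $L/N$ is cyclic of degree $4$, so $L$ is a quartic cyclic subfield of a ray class field $\mathcal{R}_{\mathfrak f}(N)$ with $\mathfrak f=\mathcal{F}_{L/N}$. Set $L_{\max}=L^{\langle a^2\rangle}$, the biquadratic subfield containing $N$.

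\emph{Ramification dichotomy.} If $L/L_{\max}$ is unramified, then every nontrivial inertia group in $\langle a\rangle\cong C_4$ contains $a^2$. The argument of Theorem~\ref{thm:D16-unram} then shows $L/N$ is unramified, so $L=\Hil(N)$ with $\Cl(N)\cong C_4$. This leaves a finite search over imaginary quadratic fields with cyclic class group of order $4$, using Watkins' list \cite{Watkins}, keeping those with $h_{\Hil(N)}=1$.

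If instead $L/L_{\max}$ is ramified, Corollary~\ref{Cor:h-Lmax} gives $h_{L_{\max}}=1$. Here $L_{\max}$ is an imaginary biquadratic field of class number one, and these are classically known and finite in number. For each such $L_{\max}$ and each of its imaginary quadratic subfields $N$, the odd part of the conductor is fixed. Proposition~\ref{prop:odd_prime_cyclic}, applied with $L_k=N$, together with Proposition~\ref{prop:no_new_odd}, gives $\mathcal{F}^{\mathrm{odd}}_{L/N}=\mathcal{F}^{\mathrm{odd}}_{L_{\max}/N}$, squarefree as in Corollary~\ref{cor:odd_squarefree}. The $2$-part is bounded by Theorem~\ref{thm:gen_2adic} with $m=2$.

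\emph{Enumeration.} For each candidate modulus, I enumerate the cyclic quartic subextensions of $\mathcal{R}_{\mathfrak m}(N)$ and keep those with Galois closure a $D_8$ over $\mathbb{Q}$ and complex conjugation non-central. I then compute class numbers with PARI/GP and compare the survivors against Table~\ref{tab:D8}.

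\emph{Main obstacle.} The step I expect to be hardest is the unramified branch. One needs a complete list of imaginary quadratic $N$ with $\Cl(N)\cong C_4$, which requires the effective class-number-$4$ classification, and then class number computations of octic Hilbert class fields with large discriminants. There I would first prune with Lemma~\ref{JMasley} and with the genus-theoretic requirement that the class group be cyclic, which means $N$ has exactly two ramified primes. Only then would I run the unconditional class-group checks.
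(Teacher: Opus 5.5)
The paper gives no proof of this statement. It is quoted from Yamamura~\cite{Y} and used only as input, supplying the class-number-one $D_8$ fields $L_{\max}$ for the degree-$16$ search. Your proposal re-derives it by running the paper's own pipeline one level down:
\begin{itemize}
\item $N=L^{\langle a\rangle}$ is imaginary because complex conjugation is a reflection.
\item If $L/L_{\max}$ is unramified, then $L=\Hil(N)$ with $\Cl(N)\cong C_4$.
\item Otherwise $h_{L_{\max}}=1$, so $L_{\max}$ runs over the finitely many class-number-one imaginary biquadratic fields. Then $\mathcal{F}^{\mathrm{odd}}_{L/N}=\mathcal{F}^{\mathrm{odd}}_{L_{\max}/N}$ by the argument of Proposition~\ref{prop:comp_cond} with $n=3$; Propositions~\ref{prop:no_new_odd} and~\ref{prop:odd_prime_cyclic} do apply verbatim to $D_8$.
\end{itemize}
Compared with the citation, your route makes the paper self-contained and explains the shape of Table~\ref{tab:D8}: the rows with $\mathcal{F}=1$ are Hilbert class fields, and the others lie over class-number-one biquadratic fields. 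The cost is reliance on two external classifications (imaginary biquadratic fields with $h=1$, imaginary quadratic fields with $\Cl\cong C_4$) and on certified class-number computations. One small point in the unramified branch: $\Hil(N)\subseteq L$ needs the remark that $\Hil(N)L/L$ is unramified abelian, hence trivial because $h_L=1$. The proof of Theorem~\ref{thm:D16-unram} that you cite simply asserts $[L:N]=|\Cl(N)|$.

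One step fails as written: the $2$-adic bound. Theorem~\ref{thm:gen_2adic} as stated is false when $\mathfrak{q}$ splits in $L_{\max}/N$. Its proof counts a single prime of $L$ above $\mathfrak{q}$, so it drops the number of such primes from $v_{\mathfrak{q}}(\mathfrak{d}_{L/N})$. For instance, $N=\mathbb{Q}$, $L_{\max}=\mathbb{Q}(\sqrt{17})$, $L=\mathbb{Q}(i,\sqrt{17})$ gives the bound $v_2(\mathcal{F}_{L/N})\le 1$, but the conductor is $68$.

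This case arises in your search. Take $N=\mathbb{Q}(i)$ and $L_{\max}=\mathbb{Q}(i,\sqrt{-7})$, which has class number one and is the $L_{\max}$ of the row $[1,7]$. The prime $(1+i)$ splits in $L_{\max}/N$ because $-7\equiv 1\pmod 8$ is a square in $\mathbb{Q}_2$. The theorem with $m=2$ then caps the exponent at $\lfloor 5/2\rfloor=2$. But the completion of $L$ at a prime above $(1+i)$ is a quadratic extension of $\mathbb{Q}_2(i)$, whose conductor exponent can be as large as $2e_{\mathfrak{q}}+1=5$. So the moduli $(1+i)^k(7)$ with $k>2$ would never be examined, and completeness would not follow.

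The fix is to use a correct local bound. Write $U^{(n)}=1+\mathfrak{q}^n$ in $N_{\mathfrak{q}}^{\times}$. Since $x\mapsto x^2$ maps $U^{(n)}$ onto $U^{(n+e_{\mathfrak{q}})}$ for $n>e_{\mathfrak{q}}$:
\begin{itemize}
\item a character of $N_{\mathfrak{q}}^{\times}$ of order dividing $4$ has conductor exponent at most $3e_{\mathfrak{q}}+1$;
\item its conductor exponent is at most $2e_{\mathfrak{q}}+1$ when its restriction to units has order dividing $2$, that is, when $\mathfrak{q}$ is unramified in $L_{\max}/N$ (the split and inert cases).
\end{itemize}
Alternatively, restore the missing factor in the discriminant estimate. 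With a correct bound your enumeration is finite and the rest of the argument goes through.
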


\begin{longtable}{|c|p{10cm}|}
\caption{Imaginary non-CM \(D_8\) fields with class number one}\label{tab:D8}\\
\hline
\([d,\mathcal{F}_{L/N}]\) & \textbf{Polynomial} \\
\hline
\endfirsthead
\hline
\([d,\mathcal{F}_{L/N}]\) & \textbf{Polynomial} \\
\hline
\endhead
\hline
\endfoot

\([2, 4]\) & \(x^8 + 16x^6 + 100x^4 + 256x^2 + 196\) \\
\([2, 7]\) & \(x^8 + 8x^6 - 368x^4 + 2048x^2 + 65536\) \\
\([3, 16]\) & \(x^8 + 4x^6 + 18x^4 - 8x^2 + 4\) \\
\([3, 11]\) & \(x^8 - 66x^6 - 264x^5 + 1067x^4 + 11616x^3 + 39930x^2 + 66792x + 48037\) \\
\([7, 3]\) & \(x^8 + 3x^6 + 12x^4 - 9x^2 + 9\) \\
\([7, 19]\) & \(x^8 + 218x^6 + 1187x^4 + 2626x^2 + 729\) \\
\([11, 16]\) & \(x^8 + 12x^6 + 90x^4 + 104x^2 + 36\) \\
\([11, 7]\) & \(x^8 + 58x^6 + 187x^4 - 12806x^2 + 83521\) \\
\([11, 19]\) & \(x^8 + 2780x^6 + 2083126x^4 + 217282940x^2 + 6132612721\) \\
\([19, 16]\) & \(x^8 + 108x^6 + 3898x^4 + 47556x^2 + 13689\) \\
\([19, 3]\) & \(x^8 - 3x^6 + 21x^4 + 36x^2 + 144\) \\
\([19, 67]\) & \(x^8 + 76x^6 + 88998x^4 + 3327052x^2 + 9247681\) \\
\([43, 16]\) & \(x^8 + 20x^6 + 8546x^4 + 6200x^2 + 1156\) \\
\([43, 3]\) & \(x^8 + 318x^6 + 101127x^4 - 954x^2 + 9\) \\
\([43, 7]\) & \(x^8 + 186x^6 + 8203x^4 + 175242x^2 + 4774225\) \\
\([43, 163]\) & \(x^8 + 172x^6 + 504006x^4 + 42708460x^2 + 576816289\) \\
\([67, 16]\) & \(x^8 + 140x^6 + 12986x^4 + 2056100x^2 + 109725625\) \\
\([67, 3]\) & \(x^8 - 45x^6 + 744x^4 - 1053x^2 + 3249\) \\
\([67, 11]\) & \(x^8 + 378x^6 - 572x^5 + 38319x^4 + 45188x^3 + 852878x^2 + 5312164x + 7167733\) \\
\([67, 43]\) & \(x^8 + 2246x^6 + 467297x^4 - 1866632x^2 + 26010000\) \\
\([1, 8]\) & \(x^8 - 4x^6 + 8x^4 - 4x^2 + 1\) \\
\([1, 6]\) & \(x^8 - 14x^6 + 111x^4 + 46x^2 + 49\) \\
\([1, 12]\) & \(x^8 + 3x^4 + 9\) \\
\([1, 10]\) & \(x^8 - 4x^7 + 12x^6 - 26x^5 + 51x^4 - 50x^3 - 30x^2 + 50x + 25\) \\
\([1, 20]\) & \(x^8 - 2x^6 - 4x^5 + 25x^4 + 24x^3 - 80x^2 + 12x + 149\) \\
\([1, 7]\) & \(x^8 - 24x^6 + 118x^4 - 360x^2 + 9801\) \\
\([1, 22]\) & \(x^8 - 40x^6 + 446x^4 - 2680x^2 + 39601\) \\
\([1, 44]\) & \(x^8 - 40x^6 + 88x^5 + 666x^4 - 2112x^3 - 3208x^2 + 7480x + 19625\) \\
\([1, 26]\) & \(x^8 - 4x^7 + 20x^6 - 58x^5 + 159x^4 - 138x^3 - 46x^2 + 390x + 225\) \\
\([1, 52]\) & \(x^8 + 14x^6 - 36x^5 + 121x^4 + 216x^3 - 576x^2 + 108x + 2349\) \\
\([1, 38]\) & \(x^8 - 72x^6 + 1678x^4 - 19224x^2 + 245025\) \\
\([1, 76]\) & \(x^8 - 72x^6 + 152x^5 + 2058x^4 - 6080x^3 - 21352x^2 + 45752x + 150937\) \\
\([1, 74]\) & \(x^8 - 4x^7 + 36x^6 - 98x^5 + 611x^4 - 970x^3 + 2846x^2 - 1290x + 19325\) \\
\([1, 148]\) & \(x^8 + 14x^6 - 4x^5 + 513x^4 + 120x^3 + 2216x^2 + 1292x + 53861\) \\
\([1, 86]\) & \(x^8 - 168x^6 + 9982x^4 - 274680x^2 + 4626801\) \\
\([1, 134]\) & \(x^8 - 264x^6 + 25198x^4 - 1096920x^2 + 24591681\) \\
\([1, 326]\) & \(x^8 - 648x^6 + 155182x^4 - 16689240x^2 + 767899521\) \\
\([1, 652]\) & \(x^8 - 648x^6 - 1304x^5 + 158442x^4 + 427712x^3 - 16895272x^2 - 34004408x + 715078393\) \\
\([39, 1]\) & \(x^8 - x^6 + 4x^4 + 3x^2 + 9\) \\
\([55, 1]\) & \(x^8 + 5x^6 + 17x^4 + 20x^2 + 16\) \\
\([56, 1]\) & \(x^8 + 218x^6 + 18169x^4 + 704712x^2 + 11343424\) \\
\([68, 1]\) & \(x^8 + 292x^6 + 30004x^4 + 1225200x^2 + 14440000\) \\
\([136, 1]\) & \(x^8 + 546x^6 + 112081x^4 + 10086456x^2 + 332405824\) \\
\([203, 1]\) & \(x^8 + 842x^6 + 251203x^4 + 31891538x^2 + 1490346025\) \\
\([219, 1]\) & \(x^8 - 34x^6 + 1159x^4 + 102x^2 + 9\) \\
\([259, 1]\) & \(x^8 + 1058x^6 + 405323x^4 + 67341386x^2 + 4200724969\) \\
\([291, 1]\) & \(x^8 + 1190x^6 + 512281x^4 + 95857104x^2 + 6635079936\) \\
\([292, 1]\) & \(x^8 + 1204x^6 + 518868x^4 + 96089328x^2 + 6491202624\) \\
\([323, 1]\) & \(x^8 + 34x^6 + 611x^4 + 306x^2 + 81\) \\
\([328, 1]\) & \(x^8 + 1282x^6 + 641873x^4 + 145631128x^2 + 12445633600\) \\
\([388, 1]\) & \(x^8 + 1574x^6 + 909569x^4 + 229871500x^2 + 21506809104\) \\
\([723, 1]\) & \(x^8 + 2780x^6 + 3024214x^4 + 1557092028x^2 + 323980671249\) \\
\([763, 1]\) & \(x^8 + 3098x^6 + 3519443x^4 + 1746274082x^2 + 320497515625\) \\
\([772, 1]\) & \(x^8 + 3078x^6 + 3572785x^4 + 1849033692x^2 + 358853713936\) \\
\([1227, 1]\) & \(x^8 - 1475282x^6 + 581677732287x^4 - 156530114x^2 + 10609\) \\
\([1243, 1]\) & \(x^8 + 5100x^6 + 9492454x^4 + 7550399340x^2 + 2117609950401\) \\
\([1387, 1]\) & \(x^8 + 5702x^6 + 11678009x^4 + 10321828432x^2 + 3350305587456\) \\
\([1507, 1]\) & \(x^8 + 2350x^6 + 14381921x^4 + 33067320880x^2 + 21010709057536\) \\
\end{longtable}

\end{document}